\documentclass{article}
\usepackage[utf8]{inputenc}
\usepackage{amsthm}
\usepackage{amssymb}
\usepackage{amsmath}
\usepackage{epsfig}
\usepackage{colonequals}
\usepackage{enumerate}
\usepackage{enumitem}
\usepackage{hyperref}

\renewcommand{\AA}{\Gamma}
\newcommand{\PP}{{\mathcal P}}
\newcommand{\QQ}{{\mathcal Q}}

\newcommand\zz{\mathbb{Z}_3}
\newcommand\g{\mathbb{G}}

\newcommand\so{\overleftrightarrow}
\let\phi\varphi
\newtheorem{theorem}{Theorem}[section]
\newtheorem{corollary}[theorem]{Corollary}
\newtheorem{lemma}[theorem]{Lemma}
\newtheorem{observation}[theorem]{Observation}

\newtheorem{conjecture}[theorem]{Conjecture}
\newtheorem{problem}[theorem]{Problem}

\begin{document}
\title{On density of $\zz$-flow-critical graphs}
\author{%
     Zden\v{e}k Dvo\v{r}\'ak\thanks{Computer Science Institute (CSI) of Charles University,
           Malostransk{\'e} n{\'a}m{\v e}st{\'\i} 25, 118 00 Prague,
           Czech Republic. E-mail: \protect\href{mailto:rakdver@iuuk.mff.cuni.cz}{\protect\nolinkurl{rakdver@iuuk.mff.cuni.cz}}.
           Supported by project 22-17398S (Flows and cycles in graphs on surfaces) of Czech Science Foundation.}
     \and
     Bojan Mohar\thanks{Department of Mathematics, Simon Fraser University, Burnaby, B.C. V5A 1S6.
           E-mail: {\tt mohar@sfu.ca}.
           On leave from IMFM \&\ FMF, Department of Mathematics, University of Ljubljana.}
           \thanks{Supported in part by the NSERC Discovery Grant R611450 (Canada),
           and by the Research Project J1-2452 of ARRS (Slovenia).}%
}

\date{\today}
%\date{}
\maketitle

\begin{abstract}
For an abelian group $\AA$, a graph $G$ is said to be $\AA$-flow-critical if $G$ does not admit a nowhere-zero $\AA$-flow, but for each edge $e\in E(G)$, the contraction $G/e$ has a nowhere-zero $\AA$-flow.
We obtain a bound on the density of $\mathbb{Z}_3$-flow-critical graphs
drawn on a fixed surface, generalizing the planar case of the bound on the density of 4-critical graphs by Kostochka and Yancey.
\end{abstract}

\section{Overview}

Critical graphs play an important role in graph coloring theory: A graph $G$ is \emph{$c$-critical} if $\chi(G)=c$,
but every proper subgraph of $G$ is $(c-1)$-colorable.  Hence, a graph is $k$-colorable if and only if it does not
contain a $(k+1)$-critical subgraph.  In particular, interesting results regarding the chromatic number of
graphs on surfaces can be obtained by combining lower bounds on the density of critical graphs with the upper bounds
on the density of graphs of given genus arising from the generalized Euler's formula.

As an example, Gallai~\cite{galfor2}
proved that for $c\ge 3$, every $c$-critical graph different from $K_c$ has average degree at least $c-1+\frac{c-3}{c^2-3}$,
and in particular, any $4$-critical graph other than $K_4$ has average degree at least $3+\frac{1}{13}$.  On the other
hand, a graph of girth at least six with $n$ vertices drawn on a surface $\Sigma$ of Euler genus $\gamma$ has average degree at most $3+3(\gamma-2)/n$.
Hence, if such a graph is $4$-critical, it has at most $39(\gamma-2)$ vertices; and consequently, the 3-colorability of graphs
of girth at least six drawn on $\Sigma$ is characterized by a finite number of forbidden subgraphs.  Analogously,
the same is true for 4-colorability of triangle-free graphs and for 6-colorability.

Let us remark that Gallai's bound was subsequently improved in~\cite{krivelevich1997minimal,kostochka2003new}, and finally an asymptotically tight bound
was given by Kostochka and Yancey~\cite{koyanore}: Any $c$-critical graph with $n$ vertices has average degree at least $\frac{(c+1)(c-2)}{c-1}-O(1/n)$.

Tutte~\cite{tutteflow} famously observed that coloring is dual to nowhere-zero flows.
To state the result, we need several definitions, which we present in a slightly more general setting that will be useful later.
We allow graphs to have parallel edges, but no loops.
Let $\AA$ be an abelian group\footnote{We will implicitly assume that $\AA$ is nontrivial, i.e. that it contains at least 2 elements although most of the results hold also for the one-element group.}.  A \emph{$\AA$-boundary} for a graph $G$ is a function $\beta:V(G)\to\AA$ such that
$\sum_{v\in V(C)} \beta(v)=0$ for every component $C$ of $G$.  
If $\beta$ is a $\AA$-boundary for $G$, we say that the pair $\g=(G,\beta)$ is a \emph{$\AA$-bordered graph} (or a \emph{graph with boundary}). 
A \emph{symmetric orientation} $\so{G}$ of $G$
is the directed graph obtained by replacing every edge $e=uv$ of $G$ by a pair of oppositely directed edges
$e_u$ (directed towards $u$) and $e_v$ (directed towards $v$).  A function $f:E(\so{G})\to\AA$
is \emph{antisymmetric} if for every $e=uv\in E(G)$, $f(e_u)=-f(e_v)$.  A \emph{flow} in $(G,\beta)$
is an antisymmetric function $f:E(\so{G})\to\AA$ such that
for every $u\in V(G)$, $$\sum_{e=uv\in E(G)} f(e_v)=\beta(u).$$
The flow is \emph{nowhere-zero} if it does not use the value $0$.
By a \emph{$\AA$-flow} in a graph $G$, we mean a flow in $(G,0)$, where $0$ is the
$\AA$-boundary assigning to each vertex the value $0$.

\begin{theorem}[Tutte~\cite{tutteflow}]\label{thm-tutteflow}
Let $k$ be a positive integer and let $\AA$ be an abelian group of order $k$.  A connected plane graph $G$ is $k$-colorable
if and only of its dual graph $G^\star$ has a nowhere-zero $\AA$-flow.
\end{theorem}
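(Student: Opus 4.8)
The plan is to factor the equivalence through the notion of a \emph{nowhere-zero $\AA$-tension}. Fix a reference orientation of $G$, i.e., for each edge $e=uv$ single out one of its two darts $e_u,e_v$ of $\so G$, say $e_v$. Call an antisymmetric function $\tau:E(\so G)\to\AA$ a \emph{tension} if it is of the form $\tau(e_v)=p(v)-p(u)$ for some \emph{potential} $p:V(G)\to\AA$; equivalently (using that $G$ is connected) if the signed sum of $\tau$ around every closed walk of $G$ vanishes. The first step is the elementary observation that a proper coloring of $G$ using the elements of $\AA$ as colours is exactly a potential $p$ whose difference $p(v)-p(u)$ across every edge $e=uv$ is nonzero; hence, after normalizing the value on one vertex, $k$-colorings of $G$ correspond bijectively to nowhere-zero $\AA$-tensions of $G$. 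Since $|\AA|=k$, the graph $G$ is $k$-colorable if and only if it admits a nowhere-zero $\AA$-tension.

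The second and main step is the planar duality statement: $G$ admits a nowhere-zero $\AA$-tension if and only if $G^\star$ admits a nowhere-zero $\AA$-flow. The reference orientation of $G$ induces one of $G^\star$ by the usual convention of rotating each edge a quarter-turn (say counterclockwise) to read off the orientation of its dual edge, and this yields a bijection $e\mapsto e^\star$ between $E(G)$ and $E(G^\star)$ under which each dart of $e$ is matched with a dart of $e^\star$; transport $\tau$ along this bijection to an antisymmetric $\varphi:E(\so{G^\star})\to\AA$. I would first check the easy direction: if $\tau(e_v)=p(v)-p(u)$, then at any vertex $w$ of $G^\star$, which is a face $F$ of $G$, the darts of $G^\star$ entering $w$ are exactly the dual darts of the edges bounding $F$, so walking once around $\partial F$ as $v_0e_1v_1\cdots e_\ell v_\ell=v_0$ the conservation sum telescopes, $\sum_i\bigl(p(v_i)-p(v_{i-1})\bigr)=0$; thus $\varphi$ is a flow in $(G^\star,0)$. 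For the converse, a nowhere-zero flow $\varphi$ of $(G^\star,0)$ transports back to an antisymmetric $\tau$ on $G$, and to see that $\tau$ is a tension one must check that its signed sum around every cycle $C$ of $G$ is $0$: by the Jordan curve theorem $C$ splits the sphere into two disks, the edges of $C$ are precisely the dual edges of the cut of $G^\star$ separating the faces of $G$ inside $C$ from those outside, and the signed sum of $\varphi$ over that cut equals the sum of the (zero) conservation equations of $G^\star$ over the vertices on one side, hence is $0$. Being nowhere-zero is preserved throughout, since the transport merely permutes dart-values up to sign. Combining the two steps proves the theorem.

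The part I expect to require genuine care is the sign and orientation bookkeeping in the second step --- ensuring that ``tension of $G$'' is carried \emph{exactly} onto ``flow of $(G^\star,0)$'', and not onto a flow with some nonzero boundary or with a global sign twist --- together with the topological input it rests on, namely that a cycle of a plane graph separates the sphere (equivalently, that the cycle space of $G$ equals the cut space of $G^\star$); this is the only place where planarity is actually used. One small wrinkle to dispose of is that a bridge of $G$ would make $G^\star$ have a loop, which the paper's conventions exclude: this is harmless, since a bridge $e$ has the same face of $G$ on both sides, so in $G^\star$ its dual loop contributes nothing to conservation and may carry any nonzero value, matching the fact that the potential difference across $e$ in $G$ is itself unconstrained; alternatively one simply assumes $G$ is $2$-edge-connected, which costs no generality for the statement.
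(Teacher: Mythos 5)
The paper does not prove this statement at all --- it is quoted as Tutte's classical theorem with a citation --- so there is no in-paper argument to compare against; your proposal has to stand on its own, and it does. The route you take (proper $\AA$-colorings $\leftrightarrow$ nowhere-zero $\AA$-tensions via potentials, then tensions of $G$ $\leftrightarrow$ flows of $(G^\star,0)$ via the quarter-turn bijection, using that face boundaries of $G$ are the vertex stars of $G^\star$ in one direction and that cycles of $G$ are exactly the bonds of $G^\star$, by the Jordan curve theorem, in the other) is the standard and correct proof, essentially Tutte's own. You also correctly isolate the two points that need care: the orientation/sign consistency of the quarter-turn convention, which guarantees that summing the conservation equations of $G^\star$ over the vertices on one side of the bond yields precisely the signed sum of the tension around the cycle, and the bridge/loop convention mismatch with the paper's loopless setting; your disposal of the latter is fine, since a bridge lies on no cycle of $G$ (so the tension condition leaves its value free apart from being nonzero) just as a loop of $G^\star$ is unconstrained by conservation. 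I see no gap; to turn the plan into a full write-up you would only need to spell out the sign bookkeeping you already flagged.
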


Note that only the size of the group matters, and indeed, Tutte proved that if $\AA$ and $\AA'$ are abelian groups of the
same order, then the number of nowhere-zero $\AA$-flows is equal to the number of nowhere-zero $\AA'$-flows in any graph.
Tutte also stated a number of influential conjectures on the existence of nowhere-zero flows, generalizing the Five Color Theorem,
the Four Color Theorem, and Grötzsch theorem, respectively.

\begin{conjecture}[Tutte]
Let $G$ be a $2$-edge-connected graph.
\begin{itemize}[leftmargin=3.8cm]
\item[\bf{[$5$-flow Conjecture]}] $G$ has a nowhere-zero $\mathbb{Z}_5$-flow.
\item[\bf{[$4$-flow Conjecture]}] If $G$ does not contain Petersen graph as a minor, then $G$ has a nowhere-zero $\mathbb{Z}_4$-flow.
\item[\bf{[$3$-flow Conjecture]}] If $G$ is $4$-edge-connected, then $G$ has a nowhere-zero $\zz$-flow.
\end{itemize}
\end{conjecture}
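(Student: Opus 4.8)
Since each of the three statements is a long-standing open conjecture, what follows is necessarily a description of the most promising lines of attack together with an indication of where each breaks down, rather than a complete argument.

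For the \textbf{$5$-flow Conjecture} the natural starting point is Seymour's $6$-flow theorem, which supplies every $2$-edge-connected graph with a nowhere-zero $\mathbb{Z}_2\times\mathbb{Z}_3$-flow; equivalently one obtains a nowhere-zero $\mathbb{Z}_2$-flow $f_2$, whose support is an even subgraph $H$, together with a $\mathbb{Z}_3$-flow $f_3$ all of whose zeros lie in $E(H)$. The plan would be to re-route $f_3$ along the cycles of $H$ so as to destroy all of its zeros without creating new ones, via an augmenting-path style local modification; one would first pass to a minimal counterexample, which by Kochol's work is a cyclically $6$-edge-connected cubic graph of large girth, so that $H$ is comparatively sparse. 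The obstacle — and the reason the conjecture has resisted for decades — is that there is no mechanism to control the interaction of $f_2$ and $f_3$ once $H$ is complicated, and the obvious induction on $|E(H)|$ collapses because deleting or contracting an edge of $H$ destroys the connectivity hypothesis.

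For the \textbf{$4$-flow Conjecture} I would use the classical reformulation that a graph has a nowhere-zero $\mathbb{Z}_4$-flow precisely when its edge set is a union of two even subgraphs (using $|\mathbb{Z}_4|=|\mathbb{Z}_2\times\mathbb{Z}_2|$), and attempt to reduce to cubic graphs, where the statement becomes the theorem of Robertson, Sanders, Seymour and Thomas that every $2$-edge-connected cubic graph with no Petersen minor is $3$-edge-colourable; vertices of degree $1$ and $2$ are handled by deletion and suppression. The crux is the reduction itself: splitting off a pair of edges at a vertex of degree at least $4$ is the reverse of an edge contraction, so the resulting graph may contain a Petersen minor even though the original does not, and no Petersen-minor-preserving splitting operation is known. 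Making this step work is exactly the content of the unresolved part of the conjecture.

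For the \textbf{$3$-flow Conjecture} the plan is to combine Kochol's reduction — that it suffices to treat $5$-edge-connected graphs — with the theorem of Lovász, Thomassen, Wu and Zhang that every $6$-edge-connected graph has a nowhere-zero $\zz$-flow, leaving only the $5$-edge-connected case. For that case one would seek a bounded family of reducible configurations, contract one such configuration, apply induction, and lift the resulting $\zz$-flow back across the contraction. The main obstacle is that no family of reducible configurations is known to be unavoidable in $5$-edge-connected (let alone $4$-edge-connected) graphs, so the discharging/induction scheme has no base to build on; pushing the required edge-connectivity from $6$ down to $4$ is precisely where every known approach stalls.
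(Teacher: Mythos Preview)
Your proposal is appropriate in the sense that you correctly recognise the statement is a \emph{conjecture}, not a theorem, and accordingly offer a survey of known partial results and obstructions rather than a proof. The paper does the same: it states Tutte's three conjectures, notes that they remain open, and cites exactly the partial results you mention (Seymour's $6$-flow theorem, the Robertson--Seymour--Thomas work on cubic Petersen-minor-free graphs, and the Lov\'asz--Thomassen--Wu--Zhang result for $6$-edge-connected graphs). There is no proof in the paper to compare against.

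One minor remark: your discussion is more detailed than what the paper provides---the paper simply lists the known partial results in a single paragraph, whereas you sketch actual reduction strategies and identify the specific technical bottlenecks (the splitting-off problem for the $4$-flow conjecture, the lack of unavoidable reducible configurations in $5$-edge-connected graphs for the $3$-flow conjecture). That extra content is accurate and useful as exposition, but it goes beyond what the paper itself attempts, since the paper's purpose is to prove density bounds for $\mathbb{Z}_3$-flow-critical graphs rather than to attack the conjectures directly.
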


While these conjectures are still open, Seymour~\cite{seymour1981nowhere} proved existence of nowhere-zero $\mathbb{Z}_k$-flows for every $k\ge 6$,
Robertson, Seymour, and Thomas~\cite{RSTtutte} claimed the proof of existence of nowhere-zero $\mathbb{Z}_4$-flows in subcubic graphs
with no Petersen minor (with one of the basic cases later appearing in~\cite{doublecross} and another one still unpublished~\cite{cubapex}),
and Lov{\'a}sz et al.~\cite{ltwz} proved existence of nowhere-zero $\zz$-flows in 6-edge-connected graphs.

Given the importance of critical graphs for coloring, it is natural to consider the dual concept of \emph{flow-critical graphs},
with the definition based on the observation that edge removal in a plane graph corresponds to contraction of the corresponding edge
in the dual graph.  

For a partition $\PP$ of the vertex set of a graph $G$, let $G/\PP$ be the graph obtained by identifying the vertices in each part of $\PP$ to a single
vertex and then removing all resulting loops.  If $\beta$ is a $\AA$-boundary for $G$, then let $\beta/\PP$ be the $\AA$-boundary for $G/\PP$ obtained by setting $(\beta/\PP)(p)=\sum_{v\in P} \beta(v)$ for each $P\in \PP$ identified to the vertex $p$.  For a $\AA$-bordered graph $\g=(G,\beta)$, we define $\g/\PP=(G/\PP,\beta/\PP)$.
For a set $B\subseteq V(G)$, we define $G/B$ and $\g/B$ as $G/\PP_B$ and $\g/\PP_B$ for the partition $\PP_B$ consisting of $B$ and single-vertex sets $\{v\}$ for $v\in V(G)\setminus B$.
If each part of $\PP$ induces a connected subgraph of $G$, we say that $\PP$ is \emph{$G$-connected}, and that
$G/\PP$ (or $\g/\PP$) is a \emph{contraction} of $G$ (or $\g$).
The partition $\PP$ is \emph{non-trivial} if at least one part contains more than one vertex, and in this case we call
the contraction \emph{proper}.
We say a $\AA$-bordered graph $\g$ is \emph{flow-critical} if $\g$ is connected and $\g$ has no nowhere-zero flow, but
every proper contraction of $\g$ has a nowhere-zero flow.  We say that a graph $G$ is \emph{$\AA$-flow-critical} if
the $\AA$-bordered graph $(G,0)$ is flow-critical.  Clearly, a graph $H$ has a nowhere-zero $\AA$-flow if and only
if no component of $H$ has a $\AA$-flow-critical contraction.

Given the results outlined above, $K_2$ is the only $\AA$-flow-critical graph when $|\AA|\ge 6$, and Tutte's 5-flow conjecture is equivalent
to the claim that there are no $\mathbb{Z}_5$-flow-critical graphs other than $K_2$.  Hence, the notion of $\AA$-flow-criticality is interesting only
for $\AA\in\{\zz,\mathbb{Z}_4,\mathbb{Z}_2^2\}$ (and possibly $\AA=\mathbb{Z}_5$).  The study of flow-critical graphs was started by Nunes da Silva and Lucchesi~\cite{da2008flow},
who showed some of their basic properties.  Recently, Li et al.~\cite{li20223} considered the extremal question of the density of $\zz$-flow-critical
graphs, which is also the subject of this paper.

As observed by Li et al.~\cite{li20223}, dualizing the results of Kostochka and Yancey~\cite{koyanore} shows that if $G$ is a planar $\zz$-flow-critical
graph, then $|E(G)|\le \frac{5|V(G)|-8}{2}$.  Li et al.~\cite{li20223} also demonstrated that this upper bound cannot be extended to non-planar graphs:
For $n\ge 7$, the graph $K_{3,n-3}^+$ obtained from the complete bipartite graph $K_{3,n-3}$ by adding an edge joining two of the vertices of degree $n-3$
is $\zz$-flow-critical, and it has $3n-8$ edges.  They conjectured that this bound is best possible.

\begin{conjecture}\label{conj-gen}
Every $\zz$-flow-critical graph $G$ satisfies $|E(G)|\le 3|V(G)|-5$.  Moreover, if $|V(G)|\ge 7$, then $|E(G)|\le 3|V(G)|-8$.
\end{conjecture}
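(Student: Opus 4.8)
\section{Proof proposal for Conjecture~\ref{conj-gen}}

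The plan is to transplant the potential method of Kostochka and Yancey from the coloring side to the flow side. I would work throughout with $\zz$-bordered graphs, since that is the class closed under the operation (contraction) on which flow-criticality is built, and I would use the orientation reformulation of nowhere-zero $\zz$-flows: $(G,\beta)$ has a nowhere-zero $\zz$-flow if and only if $G$ admits an orientation $D$ with $d^+_D(v)-d^-_D(v)\equiv\beta(v)\pmod 3$ for every vertex $v$, under which contracting a $G$-connected partition $\PP$ amounts to summing these ``surpluses'' over each part. With the potential $\rho(\g)=3|V(G)|-|E(G)|$, the conjecture asserts that every $\zz$-flow-critical $G$ has $\rho\ge5$, and $\rho\ge8$ once $|V(G)|\ge7$. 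I would prove the following by induction on $|V(G)|+|E(G)|$, the conjecture being the case $\beta=0$: \emph{every flow-critical $\zz$-bordered graph $\g=(G,\beta)$ with $|V(G)|\ge2$ satisfies $\rho(\g)\ge5$, and $\rho(\g)\ge8$ when $|V(G)|\ge7$}; along the way one verifies that a nonzero boundary cannot help beyond a controlled amount (the tight configurations --- $K_2$ and $K_4$ for small order, and $K_{3,|V(G)|-3}^+$ for $|V(G)|\ge7$ --- all have $\beta=0$). Since a planar $\zz$-flow-critical graph already satisfies $|E(G)|\le\frac{5|V(G)|-8}{2}$, which forces $|E(G)|\le3|V(G)|-8$ once $|V(G)|\ge7$ and $|E(G)|\le3|V(G)|-5$ always, one may assume that $G$ is non-planar.

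First I would pin down the structure of a minimum counterexample $\g=(G,\beta)$. It is connected; the case $|V(G)|\le3$ is handled by hand (every flow-critical bordered graph of order at most $3$ has $\rho\ge5$); $G$ is simple, since a digon can realize an arbitrary surplus and hence a digon in a flow-critical graph of order at least $3$ could be used to extend a flow obtained from flow-criticality of the corresponding contraction; $G$ has minimum degree at least $3$, as a vertex of degree at most $2$ can be series/boundary-reduced to a strictly smaller flow-critical bordered graph of no larger potential slack; and $2$-edge-cuts are handled by a reduction to smaller flow-critical bordered pieces whose nowhere-zero flows recombine. The real engine is the analysis of small vertex sets, and here I would need a ``sub-contraction density lemma'' playing the role of Kostochka--Yancey's bound on the potential of proper subgraphs: for every $G$-connected $W$ with $2\le|W|<|V(G)|$, the bordered graph induced on $W$, together with whatever boundary is forced on its edges to $V(G)\setminus W$, must have large potential slack, since otherwise one could take a nowhere-zero flow on $\g/W$ (which exists, as a proper contraction of $\g$) and a nowhere-zero flow on the $W$-side produced by the induction hypothesis, and glue them into a nowhere-zero flow on $\g$. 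Quantitatively this should read $3|W|-|E(G[W])|\ge8$ for $|W|\ge7$, with a small correction accounting for the boundary edges, and sharper bounds for $|W|\in\{2,\dots,6\}$.

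Given these tools the argument would conclude by a counting/discharging step. One needs $\sum_{v}(\deg(v)-6)=2|E(G)|-6|V(G)|\le-10$ in general, and $\le-16$ when $|V(G)|\ge7$. Vertices of degree $3$ carry a deficit of $3$ and must be paid for by their surroundings; the sub-contraction density lemma forces degree-$3$ vertices to be spread out (no two adjacent, and only few of them near any one vertex) and forces their neighborhoods to be expensive in the flow sense, which is precisely the inequality that makes $K_{3,|V(G)|-3}^+$ --- three high-degree vertices together with $|V(G)|-3$ vertices of degree $3$ each joined to all three --- the unique tight case. The improvement from $3|V(G)|-5$ to $3|V(G)|-8$ for $|V(G)|\ge7$ should fall out of the same bookkeeping once the graph is large enough to host a second expensive configuration, the obstructions to the stronger bound being small graphs such as $K_2$ and $K_4$.

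The step I expect to be the main obstacle --- and the reason the statement is posed as a conjecture, with the present paper proving only the surface version carrying an additive genus term --- is the sub-contraction density lemma. In the coloring world one passes to a subgraph and invokes criticality of that subgraph directly; but flow-criticality speaks only about contractions, and contracting $V(G)\setminus W$ to a single vertex keeps no record of which surplus value that vertex should carry, so the glueing step requires the $W$-side to realize a nowhere-zero flow for \emph{some} boundary subject to a prescribed partial constraint --- genuinely more than flow-criticality of a contraction provides. Pushing the recursion through appears to demand either a new group-connectivity-type extension lemma for $\zz$, sharper than what follows from the results of Lov\'asz et al.\ and of Lai, or an additional global invariant --- monotone under both contraction and ``restriction to $W$'' --- supplementing $\rho$; controlling such an invariant, and in particular ruling out sporadic flow-critical graphs with $4\le|V(G)|\le6$ whose edge count is close to $3|V(G)|-5$, is where I expect the genuine difficulty to lie.
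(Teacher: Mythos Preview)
The statement is an open conjecture; the paper does not prove it. What the paper establishes is the genus-dependent bound of Theorem~\ref{thm-main2} and, separately, Theorem~\ref{thm-smallcex}: a smallest bordered counterexample to $|E|\le 3|V|-5$ must be triangle-free, $5$-edge-connected, with every $5$-cut a vertex neighborhood --- thereby reducing the first inequality to the Jaeger--Linial--Payan--Tarsi conjecture that every $5$-edge-connected graph is $\zz$-connected.

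Your framework is precisely the one the paper deploys in Section~\ref{sec-cex}: the same potential $\sigma(G)=3|V(G)|-|E(G)|$, bordered graphs, and a minimum counterexample. The achievable form of your ``sub-contraction density lemma'' is exactly Lemma~\ref{lemma-addprime}, which says $\sigma(G/\PP)<5-2l(\PP)$ for every partition $\PP$ into at least two parts, obtained by the mechanism you describe (Observation~\ref{obs-restr} plus induction applied to a flow-critical contraction of $(G[B],\beta_f)$). But this bounds the potential of \emph{contractions} of $G$, not of induced subgraphs: the flow-critical contraction of $(G[W],\beta_f)$ that the argument produces may have far fewer edges than $G[W]$ itself, so no lower bound on $3|W|-|E(G[W])|$ follows. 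The paper squeezes Lemma~\ref{lemma-addprime} for exactly the structural consequences listed in Theorem~\ref{thm-smallcex} and then stops; the discharging finish you sketch is not carried out, and with only ``triangle-free, $5$-edge-connected, internally $6$-edge-connected'' in hand there is no counting contradiction --- one genuinely needs $\zz$-connectivity of such graphs, which is open. Your final paragraph diagnoses this correctly. One further caution: you load the induction with $\rho\ge 8$ for \emph{bordered} graphs on $\ge 7$ vertices, but the paper states this only for $\beta=0$, and the bordered strengthening is neither asserted there nor obviously true.
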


Let us remark that Li et al.~\cite{li20223} only state the version of the conjecture for graphs with at least seven vertices,
but the version without this restriction follows by a straightforward inspection of graphs with at most $6$ vertices.
Li et al.~\cite{li20223} proved a weakening of this conjecture.

\begin{theorem}[Li et al.~\cite{li20223}]\label{thm-li}
Every $\zz$-flow-critical graph $G\neq K_2$ satisfies $|E(G)|\le 4|V(G)|-10$.
\end{theorem}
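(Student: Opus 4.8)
The plan is to prove the bound for all $\zz$-flow-critical graphs $G\neq K_2$ directly, by first establishing structural rigidity and then running a discharging argument; where this does not suffice I would pass to a minimal counterexample and argue inductively. The easy first observations are that such a $G$ is bridgeless (a bridge is forced to carry the value $0$, and contracting any other edge keeps the bridge, so the contraction would also have no nowhere-zero flow, contradicting criticality) and has minimum degree at least $3$ (given a vertex $v$ of degree at most $2$, a nowhere-zero flow of $G/e$ for an edge $e$ incident with $v$ extends to one of $G$ by computing the value on $e$ from the conservation law at $v$, contradicting that $G$ has none).

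Next I would show that $G$ is $3$-edge-connected and that every $3$-edge-cut of $G$ is trivial (isolates a vertex of degree $3$). The mechanism, special to $\zz$, is that a nowhere-zero flow is rigid across a small cut: the values on the edges of a cut are nonzero elements of $\zz$ summing to $0$, so across a $2$-edge-cut they are opposite and across a $3$-edge-cut they are all equal. Hence if $\{e_1,\ldots,e_k\}$ with $k\in\{2,3\}$ is a cut with both sides $A,B$ of size at least $2$, then (using $2$-edge-connectivity) $G[A]$ and $G[B]$ are connected, so $G/A$ and $G/B$ are proper contractions and therefore have nowhere-zero flows; since negating a $\zz$-flow gives another $\zz$-flow, one can match the rigidly determined cut values of a nowhere-zero flow of $G/A$ with those of one of $G/B$ and glue them into a nowhere-zero flow of $G$ --- a contradiction. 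The only additional tool I would need is that for every edge $e$, pulling back a nowhere-zero flow of $G/e$ gives a flow of $G$ that is nonzero off $e$, and since $G$ has no nowhere-zero flow the value it forces on $e$ must vanish; this is a linear constraint on the nowhere-zero flows of $G/e$ (and in fact shows $G-e$ has a nowhere-zero flow) that rules out further small configurations at low-degree vertices.

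With this structure in hand I would run a discharging argument: assign each vertex $v$ charge $d(v)-8$, so that the total is $2|E(G)|-8|V(G)|$, and show it is at most $-20$, which is exactly the claimed inequality. Vertices of degree at least $8$ contribute non-negatively; the task is to move charge to the low-degree vertices (degrees $3$ through $7$), using that (i) nontrivial $\le 3$-edge-cuts are forbidden, so degree-$3$ vertices cannot cluster (a pair or short path of degree-$3$ vertices, or various small separators, creates such a cut), and (ii) the pull-back constraints forbid additional local patterns, so each low-degree vertex has enough higher-degree neighbours to be rescued. The additive constant $20$ should come out with room to spare, since the extremal graphs $K_{3,n-3}^+$ have only $3n-8$ edges, well below the target $4n-10$.

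The hard part is that one cannot push connectivity past $3$: by Lov\'asz et al., a $\zz$-flow-critical graph is merely not $6$-edge-connected, so nontrivial $4$- and $5$-edge-cuts survive, and across these the flow is no longer rigid (a $4$-cut must carry two $1$'s and two $2$'s, but the placement is free), so the clean gluing breaks. Handling these is where the real work lies: one must instead track the subspace of $\zz$-flow values realizable on such a cut from each side and argue that compatible choices exist, or --- treating each side of the cut as a smaller flow-critical-type instance --- set up an induction on $|V(G)|$ and combine the two bounds, checking that the arithmetic of the merge does not lose more than the allowed slack. Everything else (the small-cut lemmas, the pull-back constraints, and a coarse discharging scheme) should be comparatively routine once this point is pinned down.
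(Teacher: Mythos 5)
There is a genuine gap, and it sits exactly where you park it: everything after your 3-edge-connectivity step. The structure you do establish (minimum degree $3$, bridgelessness, $3$-edge-connectivity with only trivial $3$-edge-cuts, via the rigidity of $\zz$-values on $2$- and $3$-cuts and negation-and-glue) is correct but far too weak to support any density conclusion: complete graphs are $3$-edge-connected with no nontrivial $3$-cuts, so no discharging scheme driven only by these local facts can yield an upper bound on $|E(G)|$. Your charge $d(v)-8$ with target total $-20$ is just a restatement of the inequality; you give no rules, and the "low-degree vertices cannot cluster" observations push in the wrong direction anyway -- the theorem is an upper bound on density, so what must be controlled is the possibility of many high-degree vertices, about which your local lemmas say nothing. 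The part you defer ("handling $4$- and $5$-edge-cuts is where the real work lies") is not a technical loose end but the entire content of the theorem, and your two suggested fixes (tracking realizable cut-value subspaces, or an unspecified induction on $|V(G)|$) are not carried out.

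The actual argument (sketched in the paper's concluding remarks, following the scheme of Theorem~\ref{thm-smallcex}) is global and inductive, not discharging. One works with flow-critical \emph{$\zz$-bordered} graphs $(G,\beta)$ -- the bordered setting is essential, because when you contract one side of a cut or split off edges at a vertex, the residual problem on the other side has a nonzero boundary (Observation~\ref{obs-restr}), so a purely non-bordered induction of the kind you gesture at does not close. Defining $\sigma'(G)=4|V(G)|-|E(G)|$ and taking a smallest counterexample, one proves a potential inequality for contractions (the analogue of Lemma~\ref{lemma-addprime}), and uses it to show the counterexample is $5$-edge-connected with all $(\le 6)$-edge-cuts trivial and has no degree-$5$ vertices (via splitting off), hence is $6$-edge-connected; this contradicts the theorem of Lov\'asz, Thomassen, Wu, and Zhang~\cite{ltwz} that $6$-edge-connected graphs are $\zz$-connected. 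Note how differently that result enters: you cite it only as evidence that nontrivial $4$- and $5$-cuts "survive," whereas in the real proof those cuts are eliminated in a minimal counterexample by the bordered contraction machinery, and the $6$-edge-connectivity theorem then supplies the final contradiction. Without that (or an equivalent group-connectivity input) your plan cannot reach the bound $4|V(G)|-10$, or indeed any linear bound.
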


The \emph{Euler genus} of the surface obtained from the plane by adding $h$ handles and $c$ crosscaps is $2h+c$,
and the \emph{Euler genus} $g(G)$ of a graph $G$ is the minimum Euler genus of surfaces in which $G$ can be drawn without crossings.  
Note that the graph $K_{3,n-3}^+$ is far from being planar; indeed, it has Euler genus $\lceil \tfrac{n-5}{2} \rceil$ (see Ringel~\cite{Ringelbip}).
Given that the bound for planar graphs is substantially smaller than the bound for general graphs in Conjecture \ref{conj-gen},
it is natural to ask whether a better bound holds when the genus is bounded.  As our main result, we prove that this is indeed the case.

\begin{theorem}\label{thm-main}
If $G$ is a $\zz$-flow-critical graph, then
$$|E(G)|\le \frac{5|V(G)|+5g(G)-8}{2}.$$
\end{theorem}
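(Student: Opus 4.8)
The plan is to combine a few structural observations about $\zz$-flow-critical graphs with Euler's formula on a minimum-genus embedding, reducing the statement to an inequality about the scarcity of short faces that mirrors the Kostochka--Yancey bound.

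First I would record two easy facts. Such a graph $G$ is \emph{simple}: a pair of parallel edges $e_1,e_2$ can carry any prescribed net flow between their common endpoints with both values nonzero (every element of $\zz$ is a sum of two nonzero ones), so $(G,0)$ has a nowhere-zero flow exactly when $G/e_1$ does, contradicting criticality. And $G$ is \emph{$2$-connected}: if $v$ were a cut vertex with sides $G_1,G_2$, then contracting $V(G_2)$ to a single vertex is a proper contraction equal to $G_1$, so $G_1$ — and symmetrically $G_2$ — would have a nowhere-zero flow, whose union is one for $G$. Hence $G$ (assuming $G\neq K_2$, the case where the bound is an equality) has a cellular embedding in a surface $\Sigma$ of Euler genus $g:=g(G)$ in which every facial walk has length at least $3$. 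With $n=|V(G)|$, $m=|E(G)|$ and $f$ the number of faces, Euler's formula $n-m+f=2-g$ eliminates $g$, and a short calculation shows the target $2m\le 5n+5g-8$ is \emph{equivalent} to $3m\ge 5f-2$. Since $2m=\sum_F\ell(F)$ over faces $F$ of length $\ell(F)$, this is a bound on the number of short faces: assigning each face the charge $3\ell(F)-10$ (so triangles get $-1$, quadrilaterals $+2$, and an $\ell$-face $3\ell-10$), it amounts to the total charge being at least $-4$, i.e.\ the larger faces together with an additive slack of $4$ must absorb the deficit of the triangular faces.

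The heart of the argument is to establish this using flow-criticality, with the planar case as a guide: by Theorem~\ref{thm-tutteflow} the dual $G^\star$ of a planar $\zz$-flow-critical graph is $4$-critical (deleting $e^\star$ dualizes to contracting $e$), triangular faces of $G$ become degree-$3$ vertices of $G^\star$, and $3m\ge 5f-2$ is \emph{exactly} the Kostochka--Yancey inequality for $G^\star$. On a surface there is no $4$-critical dual, so I would recover the needed sparsity intrinsically: for a triangular face $F=xyz$, contracting one of its edges produces a nowhere-zero $\zz$-flow which by hypothesis fails to extend to $G$, and unwinding this obstruction constrains the degrees of $x,y,z$ and the lengths of the faces sharing an edge with $F$ — the flow-side analogue of the Gallai-tree analysis behind~\cite{koyanore} — ruling out dense clusters of triangular faces. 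Feeding these constraints into the discharging, or more plausibly into a potential-function refinement in the style of~\cite{koyanore} with ``subgraph'' replaced by ``contraction'', should then yield $3m\ge 5f-2$ in the planar-like regime.

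The main obstacle is the genus. On a surface of positive Euler genus $G^\star$ genuinely need not be $4$-critical: a nowhere-zero $\zz$-flow of $G$ is a nowhere-zero vector of the cycle space $Z_1(G;\zz)$, whereas a proper $3$-colouring of $G^\star$ is a nowhere-zero vector of the smaller facial-cycle subspace $B^1(G^\star;\zz)\subseteq Z_1(G;\zz)$, whose quotient $H_1(\Sigma;\zz)$ has dimension at most $g$. So absence of a nowhere-zero flow still forces $G^\star$ to be non-$3$-colourable, hence to contain a $4$-critical subgraph, but criticality of $G$ only controls contractions $G/e$ (dually $G^\star-e^\star$), whose $3$-colourability is governed by $Z_1(G/e;\zz)$ rather than by $B^1((G/e)^\star;\zz)$ — so the reduction to Kostochka--Yancey leaks precisely in the $g$ homological directions. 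I expect to pay for this with the $5g$ slack in the bound, either by (i) taking a shortest non-contractible cycle, cutting $\Sigma$ along it to lower the Euler genus, and inducting while tracking the quantity $5n-2m+5g$; or (ii) deleting a bounded number of edges per unit of genus so that the dual of the remainder is honestly $4$-critical and Kostochka--Yancey applies on the nose. Getting either bookkeeping step tight enough to produce the exact constants $\tfrac52$ and $\tfrac52 g$, rather than something weaker, is the delicate part of the proof.
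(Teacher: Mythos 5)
Your reduction via Euler's formula is correct (for a cellular minimum-genus embedding, $2|E(G)|\le 5|V(G)|+5g(G)-8$ is equivalent to $3|E(G)|\ge 5f-2$, which in the plane is exactly Kostochka--Yancey for the dual), and your two structural facts (simplicity, $2$-connectedness) match Observation~\ref{obs-basic}. But from that point on the proposal is a plan, not a proof, and the gap sits exactly at the two places you flag with ``should then yield'' and ``I expect to pay for this.'' First, the local analysis of a triangular face: contracting an edge of a triangle does give a flow of $G$ that vanishes only on that edge, but turning this into usable constraints on degrees and neighbouring face lengths is not a local unwinding. In the paper this information only emerges after a global potential-method induction, and that induction forces you out of the class of graphs with zero boundary: when you contract a vertex subset $B$ and restrict the obstruction to $G[B]$, you get a $\zz$-bordered graph $(G[B],\beta_f)$ with a generally nonzero boundary (Observation~\ref{obs-restr}), so the whole argument must be run for flow-critical $\zz$-bordered graphs (Theorem~\ref{thm-main3}), with the dual $4$-Ore ``exceptional'' graphs appearing as tight cases that need separate treatment (Lemma~\ref{lemma-single}, Lemma~\ref{lemma-noexc}). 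Your proposal stays in the unbordered setting and never confronts this, yet it is where most of the work lies; the payoff is the triangle-adjacency restriction (Corollary~\ref{cor-notria}) that makes the final face count go through.

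Second, your two candidate mechanisms for absorbing the genus are both unsubstantiated and neither is close to what actually works. Cutting along a shortest non-contractible cycle destroys flow-criticality of the cut graph, so there is nothing to induct on; and deleting edges so that the dual becomes honestly $4$-critical gives a $4$-critical subgraph $H\subseteq G^\star$ whose vertex set is only some subset of the faces, and the Kostochka--Yancey bound for $H$ does not translate into a bound on $|E(G)|$ in terms of $|V(G)|$ and $g(G)$ with the constants $\tfrac52$ and $\tfrac52 g$ --- the ``bookkeeping'' you defer is the theorem. The paper instead builds the genus into the potential $\pi(G)=5|V(G)|-2|E(G)|+5g(G)$ and controls it under contraction via the superadditivity $g(G)\ge g(G/B)+g(G[B])$ (Lemma~\ref{lemma-conncon}), which is what makes the key inequality $\pi(G/\PP)\le\pi(G)-w(G,\PP)$ (Lemma~\ref{lemma-add}) and hence the whole minimal-counterexample analysis possible. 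So the skeleton (Euler's formula plus a KY-style potential argument adapted from subgraphs to contractions) points in the right direction, but the essential ideas --- the bordered generalization, the genus-aware potential and its contraction lemma, and the exceptional-graph analysis leading to the triangle structure --- are missing rather than merely unwritten.
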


Theorem~\ref{thm-main} implies Conjecture~\ref{conj-gen} for graphs $G$ whose Euler genus is at most $(|V(G)|-8)/5$,
and shows that for any fixed surface $\Sigma$, the conjecture can be verified for graphs drawn in $\Sigma$ by
examining a finite number of graphs.  Let us remark that there exist infinitely many planar $\zz$-flow-critical graphs $G$
satisfying $|E(G)|=\frac{5|V(G)|-8}{2}$, and thus the bound in Theorem~\ref{thm-main} cannot be improved to $c|V(G)|+f(g(G))$
for $c<5/2$ and any function $f$.  On the other hand, the dependence on genus is likely not the best possible.

\begin{problem}
What is the smallest constant $\alpha$ such that every $\zz$-flow-critical graph $G$ satisfies
$$|E(G)|\le \frac{5}{2}|V(G)|+\alpha g(G)+O(1)?$$
\end{problem}

Theorem~\ref{thm-main} and the example of $K_{3,n-3}^+$ show that $1\le \alpha\le 5/2$.

Our proof technique is a variation on the potential method argument of Kostochka and Yancey~\cite{koyanore},
and it naturally leads us to work in a more general setting of graphs with boundary.  Hence, we in fact prove
the following more general version of Theorem~\ref{thm-main}.

\begin{theorem}\label{thm-main2}
Let $\g=(G,\beta)$ be a $\zz$-bordered graph.  If $\g$ is flow-critical,
then
$$|E(G)|\le \frac{5|V(G)|+5g(G)-8}{2}.$$
\end{theorem}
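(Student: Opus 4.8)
The plan is to follow the potential method of Kostochka and Yancey, adapted to the flow setting. For a $\zz$-bordered graph $\g=(G,\beta)$ define a potential along the lines of
$$\rho(\g) = a|V(G)| + b|E(G)| + c\cdot g(G)$$
with constants chosen so that the target inequality $|E(G)|\le \frac{5|V(G)|+5g(G)-8}{2}$ reads as $\rho(\g)\ge$ (some fixed constant). A natural normalization is to multiply through by $2$: we want to show $5|V(G)|+5g(G)-8 - 2|E(G)|\ge 0$, so the relevant quantity is $\Phi(\g) := 2|E(G)| - 5|V(G)| - 5g(G) + 8$, and the goal is $\Phi(\g)\le 0$ for every flow-critical $\zz$-bordered graph $\g$. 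I would argue by contradiction: take a counterexample $\g=(G,\beta)$ minimizing, say, $|V(G)|$ and then $|E(G)|$, so $\Phi(\g)\ge 1$.

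The core of the argument is a discharging/reducibility analysis of local structure in a minimum counterexample, exploiting flow-criticality in the same way Kostochka–Yancey exploit colour-criticality. First I would establish basic facts: $\g$ is $2$-edge-connected (a bridge or a vertex of small degree can be removed or contracted, reducing to a smaller flow-critical bordered graph after adjusting $\beta$), $G$ has minimum degree at least $3$ (since for $\zz$-flows a vertex of degree $\le 2$ is contractible), and more generally there is no small edge cut or small vertex cut across which one can split $\g$ into two smaller bordered graphs, take flows on each piece guaranteed by minimality, and glue them — this is where the "bordered" generality pays off, since the two pieces inherit nonzero boundary values from the cut. The genus term must be handled carefully here: when we contract an edge or delete a vertex, $g$ can only decrease, and when we split along a cut into $\g_1,\g_2$ we have $g(G)\ge g(G_1)+g(G_2)$ (additivity of Euler genus over blocks / $2$-cuts, by the theorem of Battle–Harary–Kodama–Youngs and its refinements), so the potential behaves submodularly under these operations. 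The main obstacle, as in Kostochka–Yancey, will be controlling configurations of low-degree vertices — in particular, handling adjacent vertices of degree $3$ and small separating structures — and verifying that each reduction strictly decreases the (scaled) potential; the genus bookkeeping adds a layer here, since one must ensure a reduction that lowers $g$ by $1$ is still accounted for by the $-5g$ term and does not let $\Phi$ increase.

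Concretely, after the connectivity reductions I expect the proof to proceed by: (i) if $G$ has an edge $uv$ lying on no triangle with both endpoints of degree $3$, or an analogous sparse local configuration, contract it and use flow-criticality to derive a contradiction with minimality via the potential; (ii) otherwise show $G$ is "dense" everywhere, and then a direct counting argument using $\sum_v \deg(v) = 2|E(G)|$ together with the Euler-formula bound $|E(G)|\le 3|V(G)| + 3g(G) - 6$ (for simple graphs on a surface of Euler genus $g(G)$, valid once we know $G$ is essentially simple after handling parallel edges) contradicts $\Phi(\g)\ge 1$. The constant $8$ (rather than $6$) in the bound should come out of the fact that a flow-critical graph other than $K_2$ cannot be a near-triangulation everywhere — there must be some deficiency, exactly as the Kostochka–Yancey bound for $4$-critical planar graphs beats the naive Euler bound. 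I would expect the parallel-edge analysis to be a genuine technical nuisance: parallel edges are allowed, a digon is not $\zz$-flow-critical by itself but can appear inside a larger flow-critical graph, so the "make it simple" step needs its own small lemma showing parallel classes have size at most $2$ (or can be contracted) in a minimum counterexample, after which the surface edge bound applies. The single hardest step overall will be the discharging argument ruling out clusters of degree-$3$ vertices while simultaneously tracking the genus contribution; everything else is, in the spirit of this sketch, bookkeeping around flow-criticality and Euler's formula.
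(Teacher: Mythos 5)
Your overall strategy (a Kostochka--Yancey-style potential argument with the potential $\pi(G)=5|V(G)|-2|E(G)|+5g(G)$, genus subadditivity under contraction, and a final Euler-formula count) is indeed the approach the paper takes, but as sketched the argument has two gaps that would prevent it from closing. First, you never identify the extremal family, and without it the induction does not work. The graphs attaining $\pi=8$ are exactly the ``exceptional'' graphs ($K_2$ and planar duals of $4$-Ore graphs), and the paper must prove the \emph{stronger} statement that every flow-critical $\zz$-bordered graph is either exceptional or satisfies $\pi\ge 9$, together with a lemma that exceptional graphs admit nowhere-zero flows for every non-zero boundary. The reason is quantitative: in the key step one contracts the complement of a part $B$, induces a boundary on $G[B]$ via a nowhere-zero flow of $\g/B$, finds a flow-critical contraction $G[B]/\QQ$ inside, and applies minimality to it; if the inductive hypothesis only gives $\pi(G[B]/\QQ)\ge 8$ rather than $\ge 9$ (with the exceptional cases tracked separately and weighted differently), the resulting inequality loses one unit per contracted part and the bookkeeping fails. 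Your remark that ``the constant $8$ should come out of the fact that a flow-critical graph cannot be a near-triangulation'' is not a substitute for this; the $8$ versus $9$ distinction and the dual-$4$-Ore characterization are load-bearing throughout the structural lemmas.

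Second, your concluding count is too weak to yield a contradiction. From $\Phi(\g)\ge 1$ you get $2|E(G)|\ge 5|V(G)|+5g(G)-7$, while simplicity plus Euler's formula gives only $|E(G)|\le 3|V(G)|+3g(G)-6$, and these two are perfectly compatible for all but tiny graphs, so step (ii) as written proves nothing. What is actually needed is a structural restriction on triangles in a minimal counterexample (in the paper: splitting-off arguments showing that every triangle shares an edge with at most one other triangle, which forces every $3$-face to border at least two faces of length $\ge 4$), improving the face count to $2|E(G)|\ge \tfrac{10}{3}f$ and hence $\pi(G)\ge 10$. Two further points: your plan to handle parallel edges is unnecessary, since a flow-critical $\zz$-bordered graph has no parallel edges (any nowhere-zero flow of the contraction extends across a digon because $\zz$ has two non-zero elements); and you make no provision for the base case in which the boundary turns out to be zero and the graph planar --- the paper disposes of this by citing the Kostochka--Yancey bound for $4$-critical graphs via planar duality, and a fully self-contained discharging proof of that case would be a substantial additional undertaking that your sketch does not account for.
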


For an abelian group $\AA$, a graph $G$ is said to be \emph{$\AA$-connected} if $(G,\beta)$ has a nowhere-zero flow
for every $\AA$-boundary $\beta$.  
Many of the results on the existence of nowhere-zero flows are known to be true
in the group-connectivity setting as well; for instance, 6-edge-connected graphs are $\zz$-connected~\cite{ltwz}.
Moreover, the following group-connectivity analogue to the 3-flow conjecture was proposed.

\begin{conjecture}[Jaeger et al.~\cite{jaeger1992group}]\label{conj-3con}
Every $5$-edge-connected graph is $\zz$-connected.
\end{conjecture}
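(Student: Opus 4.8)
The plan is to separate a ``bounded genus'' part, which is a direct consequence of Theorem~\ref{thm-main2}, from the genuinely open ``unbounded genus'' part.

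\emph{The low-genus case.} Suppose $G$ is $5$-edge-connected but not $\zz$-connected, and fix a $\zz$-boundary $\beta$ for which $(G,\beta)$ has no nowhere-zero flow. Among all contractions of $(G,\beta)$ that have no nowhere-zero flow, choose one, $\g'=(G',\beta')$, with $|V(G')|$ minimum; since a contraction of a contraction is a contraction and $G'$ is connected, any proper contraction of $\g'$ has fewer vertices and hence a nowhere-zero flow, so $\g'$ is flow-critical. Contracting a $G$-connected partition cannot decrease edge-connectivity, because every edge cut of $G/\PP$ pulls back to an edge cut of $G$ of the same size; hence $G'$ is again $5$-edge-connected, so it has at least two vertices (a one-vertex bordered graph carries the empty, vacuously nowhere-zero flow and is not flow-critical), each of degree at least $5$, and therefore $|E(G')|\ge\frac52|V(G')|$. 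Since Euler genus is minor-monotone, $g(G')\le g(G)$, while Theorem~\ref{thm-main2} gives $|E(G')|\le\frac12\bigl(5|V(G')|+5g(G')-8\bigr)$. Combining these two inequalities forces $5g(G')\ge 8$, i.e.\ $g(G')\ge 2$, and hence $g(G)\ge 2$. Contrapositively, every $5$-edge-connected graph of Euler genus at most $1$ — in particular every $5$-edge-connected planar or projective-planar graph — is $\zz$-connected, which already recovers the planar instances of the conjecture from the density bound.

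\emph{The general case.} For graphs of arbitrary genus the natural route is the reduction method of Lov\'asz et al.~\cite{ltwz}, who settled the $6$-edge-connected case. One considers a counterexample $(G,\beta)$ with $G$ $5$-edge-connected and $|V(G)|+|E(G)|$ as small as possible, and derives structure: small edge cuts force a splitting or a contraction; certain ``reducible configurations'' — bounded subgraphs that can be lifted, split off, or contracted while preserving both $5$-edge-connectivity and the absence of a nowhere-zero $\zz$-flow — are excluded; and the remaining low-density instances are eliminated by the potential argument underlying Theorems~\ref{thm-main} and~\ref{thm-main2} (itself a variant of the Kostochka--Yancey method~\cite{koyanore}). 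The density bound thus kills the sparse part of the configuration space, but not the dense part: a $5$-edge-connected graph on a surface of large Euler genus may be arbitrarily dense without violating $|E(G)|\le\frac12(5|V(G)|+5g(G)-8)$.

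\emph{The main obstacle.} The hard step — and the reason this remains a conjecture — is passing from edge-connectivity $6$ to edge-connectivity $5$. In the proof for $6$-edge-connected graphs, the extra unit of connectivity is exactly the slack needed when one splits off a pair of edges at a vertex or contracts across a small cut: the reduced instance is still $4$- or $5$-edge-connected, induction applies, and the extendability hypotheses survive. At edge-connectivity $5$ these operations can produce vertices of degree $4$, or $5$-edge-cuts admitting no legal split, and neither the known local reductions nor the present potential function are sharp enough to dispose of the resulting configurations. Progress would likely require either a new reduction exploiting the group structure of $\zz$ more aggressively, or a strengthening of Theorem~\ref{thm-main2} into a potential estimate that stays valid under the splitting and contraction operations of the induction — precisely the type of refinement the technique of this paper is designed to supply.
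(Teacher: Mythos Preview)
The statement is a \emph{conjecture}, not a theorem: the paper does not prove it and explicitly presents it as open. There is therefore no ``paper's own proof'' to compare against. Your proposal is not a proof either, and to your credit you say so outright (``the reason this remains a conjecture''). What you have actually established is only the low-genus special case, and that part is correct and matches exactly the observation the paper makes immediately after stating the conjecture: a flow-critical $\zz$-bordered graph of Euler genus at most one has average degree strictly below five by Theorem~\ref{thm-main2}, while any contraction of a $5$-edge-connected graph has minimum degree at least five, so no such contraction can be flow-critical and hence every $5$-edge-connected planar or projective-planar graph is $\zz$-connected. Your derivation of $g(G')\ge 2$ from $|E(G')|\ge \tfrac{5}{2}|V(G')|$ and the bound of Theorem~\ref{thm-main2} is the same computation.

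The remainder of your write-up --- the ``general case'' and ``main obstacle'' paragraphs --- is heuristic discussion rather than argument. It does not constitute progress beyond what is already known, and some of it is speculative (e.g.\ the suggestion that the potential method here is ``precisely the type of refinement'' needed; the paper makes no such claim, and indeed uses Conjecture~\ref{conj-3con} only as a hypothesis in Theorem~\ref{thm-smallcex}, not as a target). If the intent was to submit a proof, there is none here for the full statement; if the intent was to record what the paper's results imply about the conjecture, then the first paragraph already does that and the rest could be cut.
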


Let us remark that the 3-flow conjecture can be reduced to 5-edge-connected graphs~\cite{kochol2001equivalent},
and thus it is implied by Conjecture~\ref{conj-3con}.  By Theorem~\ref{thm-main2}, every flow-critical
$\zz$-bordered graph of Euler genus at most one has average degree less than five.  Since every contraction of
a 5-edge-connected graph has minimum degree at least five, it follows that Conjecture~\ref{conj-3con} holds for planar and
projective-planar graphs; this was previously proved in~\cite{richter2016group,de2020strong}.
Similarly, Theorem~\ref{thm-main2} implies Conjecture~\ref{conj-3con} for graphs that can be made planar by
removal of at most three edges (even in the stronger setting where we can prescribe the flow on the removed
edges arbitrarily).

As our second result, we show that Conjecture~\ref{conj-3con}
implies the first part of Conjecture~\ref{conj-gen}.

\begin{theorem}\label{thm-smallcex}
Let $G$ be a graph with the smallest number of vertices satisfying the following conditions:
\begin{itemize}
\item There exists a $\zz$-boundary $\beta$ such that $(G,\beta)$ is flow-critical, and
\item $|E(G)|>3|V(G)|-5$.
\end{itemize}
Then $G$ is triangle-free, $5$-edge-connected, and the only $5$-edge-cuts in $G$ are the vertex neighborhoods.
\end{theorem}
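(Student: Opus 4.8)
The plan is to argue by minimality, extracting structure from a hypothetical smallest counterexample $G$ with the associated boundary $\beta$. Write $\g=(G,\beta)$; then $\g$ is flow-critical and $|E(G)|>3|V(G)|-5$. First I would record the obvious consequences of flow-criticality: $\g$ is connected, and for every non-trivial $G$-connected partition $\PP$, the contraction $\g/\PP$ has a nowhere-zero flow. In particular no proper contraction of $\g$ can itself contain a flow-critical bordered graph as a contraction, which is the handle we will use to feed smaller instances into the minimality hypothesis. A basic first step is to show $G$ has minimum degree at least $3$: a vertex of degree $\le 2$ can be handled directly (a degree-$1$ vertex forces $\beta$ to be $0$ there and the flow condition fails trivially or reduces to a smaller graph; a degree-$2$ vertex can be suppressed, contracting one of its edges, contradicting criticality or minimality).

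Next I would prove triangle-freeness. Suppose $G$ contains a triangle $T$ on vertices $\{x,y,z\}$. Consider $\g/T$, obtained by identifying $T$ to a single vertex; since $\{x,y,z\}$ induces a connected subgraph, this is a contraction, so $\g/T$ has a nowhere-zero flow. The strategy is to show that such a flow, together with a suitable nowhere-zero flow on the triangle itself (here we use $|\zz|=3$: on a triangle one can realize any prescribed boundary with a nowhere-zero flow, since the cycle space of $C_3$ acts transitively enough on nonzero flows), can be glued back to a nowhere-zero flow of $\g$, contradicting flow-criticality — unless the triangle is "degenerate" in some way that still leads to a contradiction with $|E(G)|>3|V(G)|-5$ via a counting argument on $G/T$. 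The slight subtlety is that $G/T$ may create parallel edges and its edge count drops by $3$ while the vertex count drops by $2$; combined with the density assumption, $G/T$ still (just barely) violates the bound, so if $\g/T$ were flow-critical we would contradict minimality, and if it is not flow-critical we can uncontract a flow. This dichotomy, carefully set up, yields the contradiction.

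For $5$-edge-connectivity and the characterization of $5$-edge-cuts, I would use the standard contraction-of-both-sides argument for flow-critical graphs. Let $(A,B)$ be an edge cut with $k$ edges across, both sides nonempty; assume $G[A]$ and $G[B]$ connected (otherwise refine). Because $\g/A$ and $\g/B$ are proper contractions (when $|A|,|B|\ge 2$), each has a nowhere-zero flow; one shows that if $k\le 4$, the values these two flows assign to the cut edges can be reconciled — using that in $\zz$ the space of nowhere-zero flows across a cut of size $\le 4$ in each side is large enough to find a common boundary on the cut edges (a small case analysis over $k\in\{1,2,3,4\}$, exploiting that flipping a flow on a cycle through the cut or re-routing leaves the boundary unchanged) — and the two flows glue to a nowhere-zero flow of $\g$, a contradiction. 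This gives $5$-edge-connectivity once we separately dispose of cuts where one side is a single vertex (there minimum degree $\ge 3$ and the density bound must be invoked to push the degree up to $5$: a vertex of degree $3$ or $4$ can be split off or its neighborhood contracted, and the resulting smaller bordered graph is shown either flow-critical — contradicting minimality via the density count — or admits an extendable flow). The same reconciliation argument shows that any $5$-edge-cut $(A,B)$ with both sides of size $\ge 2$ also glues, hence cannot exist; so every $5$-edge-cut is of the form $(\{v\}, V(G)\setminus\{v\})$ with $\deg(v)=5$, i.e. a vertex neighborhood.

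The main obstacle I anticipate is the gluing lemma across edge cuts in the \emph{bordered} setting: one must show that for a cut of size $k\le 5$ (strict for the gluing contradiction, $k=5$ allowed only for the neighborhood characterization), a nowhere-zero flow on each contracted side can be chosen so that their restrictions to the cut edges agree. For $\zz$ this should follow from a dimension/counting argument — the set of possible "cut signatures" realizable by nowhere-zero flows on a side is an affine-type subset of $\zz^k$ of size at least $2^{k}$ (or a similarly large set), and two such subsets in $\zz^k$ with $k\le 4$ must intersect — but making this precise, handling the antisymmetry convention on $\so G$ and the boundary constraints, and ruling out the sporadic small configurations where the intersection could be empty, is where the real work lies. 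Everything else is then bookkeeping with the inequality $|E(G)|>3|V(G)|-5$ under the elementary reductions.
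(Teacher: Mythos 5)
Your proposal has genuine gaps, and they share a common cause: both of your gluing claims are false as statements about flow-critical graphs alone, so the density hypothesis $|E(G)|>3|V(G)|-5$ has to enter those arguments, and your sketch never says how. For triangles: it is not true that $K_3$ admits a nowhere-zero $\zz$-flow for every prescribed boundary (with boundary $(1,1,1)$ the three edge values would have to be $a-1,a,a+1$ for some $a$, and these cannot all be nonzero in $\zz$); in fact, by Observation~\ref{obs-restr}, since $\g$ has no nowhere-zero flow, the boundary induced on the triangle by \emph{any} nowhere-zero flow of $\g/T$ never admits a nowhere-zero flow, so the gluing step always fails and the entire content lies in exploiting that failure. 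Your fallback ``counting argument on $G/T$'' does not do this: $G/T$ has a nowhere-zero flow (it is a proper contraction), so minimality cannot be applied to it, and the obstruction sits on the triangle side, which with three vertices satisfies $3|V|-|E|\ge 6$ and is untouched by the density bound; note also that flow-critical graphs containing triangles abound ($K_4$ and every dual $4$-Ore graph), so no criticality-only argument can exclude them. Similarly for cuts: the dual $4$-Ore graph obtained by gluing two copies of $K_4$ is $\zz$-flow-critical (Observation~\ref{obs-orecrit}) and has a $4$-edge-cut with both sides of size at least two, so your reconciliation lemma for cuts of size at most $4$ is false in general (and the counting heuristic also fails, since $2\cdot 2^k\le 3^k$ already for $k=2$).

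The missing idea is the strengthened potential inequality the paper establishes first (Lemma~\ref{lemma-addprime}): with $\sigma(H)=3|V(H)|-|E(H)|$, a smallest counterexample satisfies $\sigma(G/\PP)<5-2l(\PP)$ for \emph{every} partition $\PP$ with at least two parts, where $l(\PP)$ counts the parts of size at least two. This is proved by reverse induction on $|\PP|$, replacing a nontrivial part $B$ by the parts of a flow-critical contraction of $(G[B],\beta_f)$ from Observation~\ref{obs-restr}, whose $\sigma$ is at least $5$ by minimality. Applied to the two sides of an edge cut it gives $|S|>1+2l(\PP)$, hence $|S|\ge 6$ unless one side is a single vertex; degree-$4$ vertices and triangles are then eliminated by splitting off two edges at a vertex (Observation~\ref{obs-split}), taking a flow-critical contraction of the resulting bordered graph (for the triangle, the created double edge forces the two neighbors $v_1,v_2$ into a common part, so $l\ge 1$), and feeding the outcome back into the inequality. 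Your splitting-off instinct for low-degree vertices is in the right direction, but without the $-2l(\PP)$ strengthening the bookkeeping does not close.
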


The rest of the paper is organized as follows. In Section~\ref{sec-prelim}, we present basic auxiliary results.
In Section~\ref{sec-main}, we prove a strengthening of Theorem~\ref{thm-main2}, which includes the characterization
of the graphs for which the bound is tight.  In Section~\ref{sec-cex}, we prove Theorem~\ref{thm-smallcex}.

\section{Preliminaries}\label{sec-prelim}

\subsection{Flow-critical graphs}

We use the following well-known facts concerning flow-critical graphs.

\begin{observation}\label{obs-basic}
Let $\AA$ be an abelian group and let $\g=(G,\beta)$ be a $\AA$-bordered graph.
If $\g$ is flow-critical, then
\begin{itemize}
\item $G$ is a 2-connected graph or $K_2$,
\item if $\AA\neq\mathbb{Z}_2$, then $G$ has no parallel edges, and
\item for every $e\in E(G)$, $(G-e,\beta)$ has a nowhere-zero flow.
\end{itemize}
\end{observation}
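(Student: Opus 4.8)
The three items are largely standard, and I would verify each by exploiting the hypothesis that every proper contraction of $\g=(G,\beta)$ has a nowhere-zero flow while $\g$ itself does not. For the last item, suppose $e=uv\in E(G)$. Contracting $e$ gives a nowhere-zero flow $f$ in $\g/e$; I want to lift this to a nowhere-zero flow in $(G-e,\beta)$. The point is that a flow in $\g/e$ is exactly an antisymmetric function on the edges of $G$ other than $e$ (since $\so{G/e}$ is $\so{G}$ with the pair $e_u,e_v$ deleted) satisfying the flow condition at every vertex except that the conditions at $u$ and $v$ are merged into one condition at the contracted vertex $p$ with $\beta(p)=\beta(u)+\beta(v)$. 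So $f$ satisfies the flow condition of $(G-e,\beta)$ at every vertex of $G$ other than $u,v$, and at $u$ the excess $\sum_{uw\in E(G-e)} f((uw)_w)-\beta(u)$ is some value $\delta\in\AA$; by the merged condition at $p$, the excess at $v$ is $-\delta$. If $\delta=0$ we are already done. If $\delta\neq 0$, then $f$ is a flow in $(G-e,\beta')$ where $\beta'$ differs from $\beta$ only in that $\beta'(u)=\beta(u)+\delta$, $\beta'(v)=\beta(v)-\delta$; equivalently, $f$ together with the assignment $f(e_v)=\delta$, $f(e_u)=-\delta$ is a nowhere-zero flow in $\g=(G,\beta)$ (using $\delta\neq 0$), contradicting that $\g$ has no nowhere-zero flow. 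Hence $\delta=0$ and $f$ is the desired nowhere-zero flow of $(G-e,\beta)$.

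For 2-connectedness: if $G=K_2$ we are done, so assume $G$ has at least three vertices. First, $G$ is connected by definition of flow-critical. If $G$ had a cut vertex $x$, write $G$ as the union of two subgraphs $G_1,G_2$ meeting only at $x$, each with at least one edge. A nowhere-zero flow of $\g$ would restrict to nowhere-zero flows of $(G_i,\beta_i)$ for suitable induced boundaries, but conversely nowhere-zero flows of the two pieces (with boundaries chosen so the values at $x$ sum correctly) glue to a nowhere-zero flow of $\g$; so I need to produce such flows on the pieces. The standard move is to take $B=V(G_2)$ and look at the contraction $\g/B$, which is a graph on $V(G_1)$ (with $G_2$ crushed to $x$) and which is a proper contraction since $|V(G_2)|\ge 2$; it has a nowhere-zero flow, giving a nowhere-zero flow of $(G_1,\beta_1)$ for the appropriate boundary $\beta_1$. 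Symmetrically, contracting $V(G_1)$ gives a nowhere-zero flow on $G_2$. The boundaries match up at $x$ precisely because $\beta_1(x)+\beta_2(x)$ equals what the contracted vertex sees, so the two flows combine into a nowhere-zero flow of $\g$, a contradiction. (One must check the flows are compatible at $x$; this is the only slightly delicate bookkeeping, and it works because the contracted boundary is defined as the sum.)

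For the no-parallel-edges claim when $\AA\neq\mathbb{Z}_2$: suppose $e_1,e_2$ are parallel edges joining $u$ and $v$. Since $|\AA|\ge 3$, pick $\alpha\in\AA\setminus\{0\}$ with $-\alpha\neq\alpha$, i.e. $2\alpha\neq 0$ — wait, I only need $\alpha\neq 0$; the idea is that $G$ with $e_1,e_2$ replaced by a single edge still captures everything, because one can shift flow between the two parallel edges. Concretely, $G$ properly contracts nothing by deleting a parallel edge, so instead argue directly: by the third item, $(G-e_2,\beta)$ has a nowhere-zero flow $f$. Extend $f$ to $G$ by putting $f((e_2)_v)=\gamma$ for any $\gamma\neq 0$ and simultaneously decreasing the value on $e_1$: replace $f((e_1)_v)$ by $f((e_1)_v)-\gamma$. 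This is again antisymmetric and still satisfies all flow conditions (the net change at $u$ and at $v$ is zero). It is nowhere-zero provided $f((e_1)_v)-\gamma\neq 0$, i.e. $\gamma\neq f((e_1)_v)$; since $|\AA|\ge 3$ there are at least two nonzero choices of $\gamma$, so such a $\gamma$ exists. This yields a nowhere-zero flow of $\g$, a contradiction. The main obstacle in writing this up cleanly is simply keeping the orientation/sign conventions for $\so{G}$ straight throughout; none of the steps is conceptually hard.
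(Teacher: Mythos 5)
Your proposal is correct and follows essentially the same route as the paper: each item is obtained by taking a nowhere-zero flow of a proper contraction ($\g/V(G_1)$ and $\g/V(G_2)$ for 2-connectedness, $\g/\{u,v\}$ for an edge $e=uv$) and combining or extending it, with flow-criticality of $\g$ supplying the contradiction. Two small points: with the paper's convention $\sum_{e=uv\in E(G)}f(e_v)=\beta(u)$, the extension in your third item should be $f(e_v)=-\delta$ and $f(e_u)=\delta$ (a harmless sign slip, since $\delta\neq 0$ iff $-\delta\neq 0$); and your derivation of the no-parallel-edges item invokes the third item for an edge $e_2$ that has a parallel copy $e_1$, in which case $\g/\{u,v\}$ deletes both $e_1$ and $e_2$, so your identification of its flows with antisymmetric functions on $E(G)\setminus\{e_2\}$ needs a one-line adjustment --- the paper avoids this by extending a nowhere-zero flow of $\g/\{u,v\}$ over the whole parallel class directly, using $|\AA\setminus\{0\}|\ge 2$.
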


\begin{proof}
We may assume that $|\AA|\ge2$.  If $G$ were neither $2$-connected nor $K_2$, then
$G=G_1\cup G_2$ for proper induced connected subgraphs $G_1$ and $G_2$ intersecting
in exactly one vertex, and nowhere-zero flows in $\g/V(G_1)$ and $\g/V(G_2)$
would combine to a nowhere-zero flow in $\g$, which is a contradiction.

If $G$ had parallel edges between vertices $u$ and $v$, then (using the fact that $|\AA\setminus\{0\}|\ge 2$),
we could extend any nowhere-zero flow in $\g/\{u,v\}$ to a nowhere-zero flow in $\g$.

For an edge $e=uv$, a nowhere-zero flow in $\g/\{u,v\}$ corresponds to a flow $f$ in $\g$ which is non-zero everywhere except for $e$.
Since $\g$ does not have a nowhere-zero flow, the value of $f$ on $e$ must be zero, and thus $f$ is a nowhere-zero
flow in $(G-e,\beta)$.
\end{proof}

Moreover, we will need a standard observation on non-existence of nowhere-zero flows in subgraphs.

\begin{observation}\label{obs-restr}
Let $\AA$ be an abelian group and let $\g=(G,\beta)$ be a $\AA$-bordered graph with no nowhere-zero flow.
Let $B$ be a subset of\/ $V(G)$ such that $G[B]$ is connected and let $f$ be a nowhere-zero flow in $\g/B$, which we view as
assigning values to the edges of the symmetric orientation of $G-E(G[B])$.  Let $\beta_f$ be
defined by
$$\beta_f(u)=\beta(u)-\sum_{e=uv\in E(G)\setminus E(G[B])} f(e_v)$$
for every $u\in B$.  Then $\beta_f$ is a $\AA$-boundary for $G[B]$ and $(G[B],\beta_f)$ does not have a nowhere-zero flow.
\end{observation}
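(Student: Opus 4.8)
The plan is to verify the two assertions separately: that $\beta_f$ is a $\AA$-boundary for $G[B]$, and that $(G[B],\beta_f)$ has no nowhere-zero flow. For the first, I would use the flow-conservation of $f$ at the contracted vertex; for the second, I would argue by contradiction, gluing $f$ together with a hypothetical nowhere-zero flow on $(G[B],\beta_f)$ to obtain a nowhere-zero flow in $\g$, contradicting the hypothesis.

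For the boundary claim, note that since $G[B]$ is connected it suffices to show $\sum_{u\in B}\beta_f(u)=0$. Write $b$ for the vertex of $G/B$ obtained by identifying $B$. The edge set of $G/B$ is naturally identified with $E(G)\setminus E(G[B])$, and the edges of $G/B$ incident with $b$ are exactly the edges of $G$ with one endpoint in $B$ and one endpoint outside $B$; for each such edge $e=uv$ with $u\in B$, the orientation "towards $v$" in $G$ corresponds to the orientation "towards $v$" in $G/B$. Summing the defining equation of $\beta_f$ over $u\in B$, the total correction term $\sum_{u\in B}\sum_{e=uv\in E(G)\setminus E(G[B])}f(e_v)$ therefore equals $\sum_{e=bv\in E(G/B)}f(e_v)$, which by flow conservation of $f$ in $\g/B$ is $(\beta/B)(b)=\sum_{u\in B}\beta(u)$. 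Hence $\sum_{u\in B}\beta_f(u)=\sum_{u\in B}\beta(u)-\sum_{u\in B}\beta(u)=0$.

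For the non-existence claim, suppose for contradiction that $g$ is a nowhere-zero flow in $(G[B],\beta_f)$. Define $h$ on the symmetric orientation of $G$ by letting $h$ agree with $g$ on edges of $G[B]$ and with $f$ on all remaining edges (using the identification above). Then $h$ is antisymmetric and nowhere-zero because $f$ and $g$ are. It remains to check $\sum_{e=uv\in E(G)}h(e_v)=\beta(u)$ at every $u\in V(G)$. If $u\notin B$, every edge incident with $u$ lies in $E(G)\setminus E(G[B])$, so the sum is the conservation sum of $f$ at $u$ in $\g/B$, equal to $(\beta/B)(u)=\beta(u)$. If $u\in B$, the edges at $u$ split into those inside $G[B]$, contributing $\sum_{e=uv\in E(G[B])}g(e_v)=\beta_f(u)$, and those leaving $B$, contributing $\sum_{e=uv\in E(G)\setminus E(G[B])}f(e_v)$; adding these and substituting the definition of $\beta_f$ gives $\beta(u)$. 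Thus $h$ is a nowhere-zero flow in $\g$, a contradiction.

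I do not expect a genuine obstacle here: the argument is a routine "gluing" of flows. The one point requiring care is the bookkeeping that identifies the edges and orientations of $\so{G/B}$ — on which $f$ is defined — with the symmetric orientation of $G-E(G[B])$, so that the "towards $v$" conventions used in the conservation equations match up correctly after the contraction; once this is set up cleanly, both verifications are short.
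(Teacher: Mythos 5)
Your proof is correct and follows essentially the same route as the paper: the boundary condition is verified by summing the definition of $\beta_f$ over $B$ and applying flow conservation of $f$ at the contracted vertex $b$, and the non-existence claim follows by combining a hypothetical nowhere-zero flow on $(G[B],\beta_f)$ with $f$ to contradict the hypothesis on $\g$. You simply carry out the bookkeeping more explicitly than the paper does.
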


\begin{proof}
Let $b$ be the vertex of $G/B$ created by the contraction of $B$.  Since $G[B]$ is connected and
$$\sum_{u\in B}\beta_f(u)=(\beta/B)(b)-\sum_{e=bv\in E(G/B)} f(e_v)=0,$$
$\beta_f$ is a $\AA$-boundary for $G[B]$.
A nowhere-zero flow in $(G[B],\beta_f)$ would combine with $f$ to a
nowhere-zero flow in $\g$, and thus no such nowhere-zero flow exists.
\end{proof}

Let $e_1=u_1v$ and $e_2=u_2v$ be edges incident with the same vertex $v$.  By \emph{splitting off}
these edges, we mean deleting $e_1$ and $e_2$ and adding a new edge between $u_1$ and $u_2$.

\begin{observation}\label{obs-split}
Let $\AA$ be an abelian group, let $(G,\beta)$ be a $\AA$-bordered graph, and let $G'$ be
obtained from $G$ by splitting off a pair of edges.  If $(G',\beta)$ has a nowhere-zero flow, then so does $(G,\beta)$.
\end{observation}

\subsection{4-critical and $\zz$-flow-critical planar graphs}

A graph $H$ is an \emph{Ore sum} of $2$-connected graphs $H_1$ and $H_2$ if $H$ is obtained from the disjoint union of $H_1$ and $H_2$
by
\begin{itemize}
\item selecting a vertex $z$ of $H_1$,
\item splitting $z$ into two vertices $x_1$ and $y_1$, distributing the edges incident with $z$
arbitrarily among $x_1$ and $y_1$, but so that neither of $x_1,y_1$ is isolated in the resulting graph,
\item deleting an edge $e=x_2y_2$ from $H_2$, and
\item identifying $x_1$ with $x_2$ and $y_1$ with $y_2$.
\end{itemize}
Observe that $H$ is also $2$-connected.  If $H$ is a plane graph, then note that contracting the
subgraph corresponding to $H_2-e$ gives a plane drawing of $H_1$ in which the edges incident with $x_1$ (as well as the edges
incident with $y_1$) appear consecutively around $z$; and moreover, contracting the subgraph corresponding to $H_1-x_1$
and suppressing the arising parallel edges gives a plane drawing of $H_2$.  We say that $H$ is obtained by a \emph{plane Ore sum}
of the corresponding plane graphs $H_1$ and $H_2$.  A graph $H$ is \emph{$4$-Ore} if it can be obtained by Ore sums from copies of $K_4$.
\begin{observation}\label{obs-planeore}
If a $4$-Ore graph $H$ is planar, then any plane drawing of $H$ is obtained by plane Ore sums from plane drawings of $K_4$.
\end{observation}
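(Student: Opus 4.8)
The plan is to prove the statement by induction on the number of $K_4$-summands used to build $H$. If $H=K_4$, every plane drawing of $K_4$ is of the required form trivially, so assume $H$ is obtained as an Ore sum of $2$-connected graphs $H_1$ and $H_2$, where (by reorganizing the Ore-sum construction if necessary) $H_2$ is a $4$-Ore graph obtained from fewer copies of $K_4$ and $H_1$ is likewise $4$-Ore with fewer copies. Fix an arbitrary plane drawing $D$ of $H$. The subgraph $H'$ of $H$ corresponding to $H_2-e$ is connected and $2$-connected after adding back the path through the identified vertices $x=x_1=x_2$ and $y=y_1=y_2$; the key structural fact I would extract first is that in $D$ the set $V(H_2)\setminus\{x,y\}$ together with the edges of $H_2-e$ occupies a region bounded by the two vertices $x$ and $y$, so that contracting this piece is a legitimate operation on the plane drawing.

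The main work is to verify the two ``moreover'' assertions of the definition of plane Ore sum for the drawing $D$. First I would contract the connected subgraph corresponding to $H_2-e$ in $D$; since this subgraph meets the rest of $H$ only in $x$ and $y$, the contraction yields a plane drawing of $H_1$ (the vertex $z$ being the image of the contracted piece, with $x_1,y_1$ merged back to $z$). In this drawing the edges of $H_1$ that were incident with $x_1$ all emanate from one ``side'' where the $x$-corner of the contracted region was, and those incident with $y_1$ from the other side; hence they appear in two consecutive blocks around $z$, as required. Symmetrically, contracting the subgraph corresponding to $H_1-x_1$ (equivalently, $H_1$ minus the $x_1$-block, which is connected because $H_1$ is $2$-connected and $y_1$ is non-isolated) and suppressing the resulting parallel edges produces a plane drawing of $H_2$: the contracted image becomes the vertex $y_2$, the vertex $x$ survives as $x_2$, and the edge $e=x_2y_2$ reappears as (one of) the suppressed parallel edges. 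Once $D$ is shown to restrict to plane drawings $D_1$ of $H_1$ and $D_2$ of $H_2$ compatible with the Ore-sum data, I apply the induction hypothesis to $D_1$ and $D_2$ to express each as a plane Ore sum of plane $K_4$'s, and conclude that $D$ itself is such a plane Ore sum.

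I expect the main obstacle to be the bookkeeping around the cyclic order of edges at the split vertex $z$ and at the identified vertices $x,y$: one must argue that an \emph{arbitrary} plane embedding of $H$ — not one we constructed — necessarily has the edges incident with $x_1$ (resp.\ $y_1$) appearing consecutively around $z$ after the contraction, and that no rotation system obstruction arises when we suppress parallel edges. This follows from $2$-connectivity (so that the contracted subgraphs are each bounded by exactly the two vertices $x,y$ and thus form a ``bridge'' that sits in a single face on one side) together with the fact that the operations of contracting a connected subgraph and suppressing parallel edges preserve planarity and act predictably on rotation systems; making this precise is the crux of the argument, but it is the standard topological reasoning about bridges of a cycle in a plane graph.
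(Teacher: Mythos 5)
Your proposal is correct and follows essentially the argument the paper intends: the paper states this as an unproved observation whose content is exactly your induction over the $4$-Ore construction, with the contraction facts (contracting the connected subgraphs corresponding to $H_2-e$ and to $H_1-x_1$ in the given drawing yields plane drawings of $H_1$ and $H_2$) already built into the definition of plane Ore sum. The only loose spot is the ``one side'' picture for the rotation at $z$: in an arbitrary drawing the components of $H_1-z$ may lie in several different faces of the drawn copy of $H_2-e$, not just one region bounded by $x$ and $y$, but since each such component attaches only at $x$ and $y$ the consecutiveness conclusion still holds, and it is anyway not needed for the statement of the observation itself.
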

Let us now introduce the dual operation.  A \emph{gluing} of plane graphs $G_1$ and $G_2$ is a plane graph $G$ obtained
from the disjoint union of $G_1$ and $G_2$ by
\begin{itemize}
\item selecting any distinct vertices $u_1$ and $v_1$ incident with the same face of $G_1$,
\item deleting an edge $e=u_2v_2$ from $G_2$, and
\item identifying $u_1$ with $u_2$ and $v_1$ with $v_2$.
\end{itemize}
\begin{observation}\label{obs-dual}
Let $H_1$, $H_2$, and $H$ be plane 2-connected graphs, and let $G_1$, $G_2$, and $G$ be their plane duals.
Then $H$ is a plane Ore sum of $H_1$ and $H_2$ if and only if $G$ is a gluing of $G_1$ and $G_2$.
\end{observation}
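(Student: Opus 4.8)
The plan is to realize both constructions as \emph{$2$-sums along a virtual edge} and then invoke the classical fact that plane duality commutes with the $2$-sum. For plane graphs $P,Q$ with distinguished edges $p=p_1p_2\in E(P)$ and $q=q_1q_2\in E(Q)$, drawn so that $p$ bounds a face of $P$, $q$ bounds a face of $Q$, and $Q$ is placed inside that face of $P$, let $P\oplus_2 Q$ be the plane graph obtained from the disjoint union $P\cup Q$ by identifying $p$ with $q$ (matching $p_i$ with $q_i$) and deleting the identified edge. Given the data of a plane Ore sum of $H_1$ and $H_2$ --- a vertex $z\in V(H_1)$ with a partition of the edges at $z$ into two consecutive nonempty arcs $R_x,R_y$, and an edge $e=x_2y_2\in E(H_2)$ --- I would let $\tilde H_1$ be obtained from $H_1$ by splitting $z$ into $x_1$ (incident with $R_x$) and $y_1$ (incident with $R_y$) and drawing a new edge $\varepsilon=x_1y_1$ in the face created by the split. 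Directly from the definitions (the edge $\varepsilon$ is created and then deleted, leaving no trace), the plane Ore sum of $H_1$ and $H_2$ equals the $2$-sum $\tilde H_1\oplus_2 H_2$ along $\varepsilon$ and $e$. Symmetrically, for gluing data --- distinct vertices $u_1,v_1$ on a common face of $G_1$ and an edge $e'=u_2v_2\in E(G_2)$ --- letting $\tilde G_1$ be $G_1$ with a new edge $\varepsilon'=u_1v_1$ drawn inside that face, the corresponding gluing of $G_1$ and $G_2$ equals the $2$-sum $\tilde G_1\oplus_2 G_2$ along $\varepsilon'$ and $e'$.

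The heart of the matter is a \emph{local} duality lemma: for a $2$-connected plane graph $A$ and $z\in V(A)$, the graph $\tilde A$ obtained from $A$ by splitting $z$ and adding the virtual edge $\varepsilon$ as above is the plane dual of the graph obtained from $A^\star$ by adding a single new edge $\varepsilon^\star$ inside the face $\phi$ of $A^\star$ dual to $z$, joining two distinct vertices on $\partial\phi$; moreover $\varepsilon^\star$ is the edge of $(\tilde A)^\star$ dual to $\varepsilon$, and the ways of splitting $z$ correspond bijectively to the unordered pairs of distinct vertices of $\partial\phi$. To prove this I would first observe that contracting $\varepsilon$ in $\tilde A$ merges $x_1,y_1$ back into $z$ and recovers $A$; since edge contraction is dual to edge deletion, $(\tilde A)^\star-\varepsilon^\star=A^\star$, so $(\tilde A)^\star$ is just $A^\star$ with one extra edge $\varepsilon^\star$, and it remains only to locate its endpoints and the face into which it is drawn. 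Using that $A$ (equivalently $A^\star$) is $2$-connected, every face of $A^\star$ is bounded by a cycle, so the corners at $z$ (consecutive pairs of edges in the rotation at $z$) are in bijection with the vertices on the boundary cycle of $\phi$, a corner corresponding to the vertex of $A^\star$ dual to the face of $A$ at that corner. Splitting $z$ into the arcs $R_x,R_y$ is the same as choosing the two corners of $z$ at which the rotation is cut; a short inspection of the rotation systems at $z$, $x_1$ and $y_1$ then shows that the two faces of $\tilde A$ incident with $\varepsilon$ are exactly the two faces of $A$ at these corners (with $z$ replaced by $x_1$ or $y_1$), and that $\varepsilon$ is drawn ``at $z$''. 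Dually, $\varepsilon^\star$ joins the two corresponding vertices of $\partial\phi$ and lies inside $\phi$, which is precisely the asserted construction; the reverse bijection is immediate, and Euler's formula ($\tilde A$ has the same number of faces as $A$, one more vertex, one more edge) keeps the bookkeeping consistent.

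Finally I would invoke the standard fact that $(P\oplus_2 Q)^\star=P^\star\oplus_2 Q^\star$, the $2$-sum of the duals along the dual edges $p^\star$ and $q^\star$; this follows directly by matching up face sets and using that deletion is dual to contraction (it is the planar case of commutation of matroid $2$-sum with duality). Applying the local lemma with $A=H_1$ gives $(\tilde H_1)^\star=\tilde G_1$ with $\varepsilon^\star=\varepsilon'$, and since deleting $e$ from $H_2$ is dual to contracting $e^\star$ in $G_2$, whose endpoints $u_2,v_2$ are the vertices dual to the two faces of $H_2$ incident with $e$, I get
$$
(\text{plane Ore sum of }H_1,H_2)^\star=(\tilde H_1\oplus_2 H_2)^\star=\tilde G_1\oplus_2 G_2=(\text{gluing of }G_1,G_2),
$$
where the gluing uses the vertices $u_1,v_1$ supplied by the local lemma and the edge $e'=e^\star$. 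Since $(\cdot)^\star$ is an involution on connected plane graphs and the correspondence between the data of an Ore sum and the data of a gluing (the bijection of the local lemma together with $e\mapsto e^\star$) is a bijection, repeating the argument with the roles of the $H_i$ and $G_i$ exchanged gives the converse implication, which proves the equivalence.

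I expect the main obstacle to be the local lemma, specifically the need to be fully explicit about the rotation systems and about where the virtual edges sit in the plane; one also has to check, in the displayed chain, that the plane embeddings force the endpoint matching in the two $2$-sums to correspond under duality --- the one point where \emph{plane} Ore sums and gluings, rather than abstract $2$-sums, are genuinely needed. The rest is routine.
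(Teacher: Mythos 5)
The paper offers no proof of Observation~\ref{obs-dual}: it is stated as an observation, treated as immediate from the definitions of plane Ore sum and gluing together with the standard planar dualities (contracting an edge is dual to deleting the dual edge, and splitting a vertex is dual to adding an edge across the corresponding dual face). Your argument is correct and essentially makes these implicit facts explicit, so it does not conflict with the paper --- it supplies the verification the paper omits. Packaging both operations as a $2$-sum along a virtual edge is a clean way to organize the bookkeeping: the identities (plane Ore sum) $=\tilde H_1\oplus_2 H_2$ and (gluing) $=\tilde G_1\oplus_2 G_2$ reduce everything to your local lemma plus compatibility of plane duality with the $2$-sum, and as a by-product you get a bijective correspondence between the Ore-sum data $(z,R_x,R_y,e)$ and the gluing data $(\phi,u_1,v_1,e')$, slightly more than the stated equivalence, which is what makes the converse direction formally painless since duality is an involution on connected plane graphs. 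The steps you flag as routine are indeed the only places needing care, and none of them fails: contracting $\varepsilon$ recovers $A$, so $(\tilde A)^\star-\varepsilon^\star=A^\star$; the two faces of $\tilde A$ incident with $\varepsilon$ are the faces of $A$ at the two cut corners, so $\varepsilon^\star$ joins their dual vertices inside the face $\phi$ dual to $z$; and $2$-connectivity enters exactly where you invoke it --- faces are bounded by cycles, so distinct corners at $z$ carry distinct faces, whence $\varepsilon^\star$ has two distinct ends on $\partial\phi$, matching the clause ``distinct vertices incident with the same face'' in the definition of gluing, while conversely any such pair of vertices determines a split of $z$ into two nonempty arcs. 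The endpoint-matching subtlety in the $2$-sum duality is harmless for the stated equivalence, since the labels $u_1,v_1$ (respectively $x_1,y_1$) can be swapped freely in the definitions.
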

We say that a graph $G$ is \emph{dual $4$-Ore} if a plane drawing of $G$ is dual to a plane drawing of a $4$-Ore graph,
or equivalently (by Observation~\ref{obs-dual}), a plane drawing of $G$ is obtained by gluing from plane drawings of $K_4$.

Kostochka and Yancey~\cite{kostochka2018brooks} gave a result on the density of $k$-critical graphs, which can in the case $k=4$
be stated as follows.

\begin{theorem}[{Kostochka and Yancey~\cite[Theorem 6]{kostochka2018brooks}}]\label{thm-koyan}
If $H$ is a 4-critical graph, then
$$|E(H)|\ge \frac{5|V(H)|-2}{3}.$$
Moreover, if $H$ additionally is not a 4-Ore graph, then
$$|E(H)|\ge \frac{5|V(H)|-1}{3}.$$
\end{theorem}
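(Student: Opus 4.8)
The plan is to run the potential method of Kostochka and Yancey. For a graph $R$, define the \emph{potential} $\rho(R)=5|V(R)|-3|E(R)|$; then the two displayed bounds are equivalent to $\rho(H)\le 2$ in general and $\rho(H)\le 1$ when $H$ is not $4$-Ore. I would first record how $\rho$ behaves under the Ore construction: an Ore sum $H$ of $H_1$ and $H_2$ has $|V(H)|=|V(H_1)|+|V(H_2)|-1$ and $|E(H)|=|E(H_1)|+|E(H_2)|-1$, hence $\rho(H)=\rho(H_1)+\rho(H_2)-2$. Since $\rho(K_4)=2$, induction on the number of Ore sums shows that every $4$-Ore graph has potential exactly $2$, and (using the standard fact that an Ore sum of $4$-critical graphs is $4$-critical) that every $4$-Ore graph is $4$-critical. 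So it suffices to prove that every $4$-critical graph $H$ satisfies $\rho(H)\le 2$, with $\rho(H)\le 1$ unless $H$ is $4$-Ore.

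Suppose not, and let $H$ be a counterexample minimizing $|V(H)|$, and subject to that $|E(H)|$. Then $H\ne K_4$ and $H$ is $4$-critical, so $H$ is $2$-connected with $\delta(H)\ge 3$; moreover $\delta(H)=3$, since $\delta(H)\ge 4$ would force $|E(H)|\ge 2|V(H)|$ and hence $\rho(H)\le -|V(H)|<1$. Fix $v$ with $\deg(v)=3$ and $N(v)=\{x,y,z\}$. As $H$ is $4$-critical, $H-v$ has a proper $3$-coloring, but in every such coloring $x$, $y$, $z$ receive three distinct colors (else it would extend to $v$). This rigidity is the lever: for each pair among $x,y,z$ that is nonadjacent in $H$, identifying that pair in $H-v$ yields a graph that is not $3$-colorable, hence contains a $4$-critical subgraph on fewer than $|V(H)|$ vertices, which by the minimality of $H$ has potential at most $2$. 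Using the identity $\rho(H-v)=\rho(H)+4$ and combining the potentials of these extracted subgraphs with that of the rest of $H-v$ — controlled by an auxiliary lemma that lower-bounds $\rho(H[W])$ over nonempty proper subsets $W\subsetneq V(H)$, with the $4$-Ore subgraphs as its tight cases, and itself proved by the same minimality argument together with the $3$-colorability of $H[W]$ — one obtains $\rho(H)\le 1$, contradicting the choice of $H$ unless every inequality used is tight. Analyzing that tight case shows that $H-v$ and the extracted critical subgraphs assemble as iterated Ore sums of copies of $K_4$, so that $H$ is $4$-Ore with $\rho(H)=2$, which is not a counterexample either.

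I expect two points to be the main obstacles. First, the auxiliary lemma bounding $\rho(H[W])$ from below is the genuine technical heart: ruling out dense proper subgraphs of a minimum counterexample is delicate, and since its equality cases are exactly the $4$-Ore graphs, this lemma must be developed in parallel with the main statement, carrying the $4$-Ore exceptions throughout. Second, the degree-$3$ reduction breaks into cases according to which of $xy$, $yz$, $xz$ are edges of $H$ (each absent edge supplies one more usable identification) and according to how the three extracted $4$-critical subgraphs overlap inside $H-v$; each case demands its own potential bookkeeping, and it is precisely in the tight subcases that the Ore-sum structure must be recognized. Separating the constants $2$ and $1$ — that is, showing that equality in the weaker bound forces $H$ to be $4$-Ore — is where essentially all of the effort lies.
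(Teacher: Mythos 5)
The paper does not prove this statement at all: it is imported verbatim as Theorem~6 of Kostochka and Yancey~\cite{kostochka2018brooks} and used as a black box (only its dual consequence, Corollary~\ref{cor-koyan}, is derived from it). So the relevant question is whether your blind attempt actually constitutes a proof, and it does not. Your easy bookkeeping is fine -- $\rho(H)=5|V(H)|-3|E(H)|$, the Ore-sum identity $\rho(H)=\rho(H_1)+\rho(H_2)-2$, $\rho(K_4)=2$, the reduction to $\delta(H)=3$, the fact that the three neighbours of a degree-$3$ vertex get distinct colours in any $3$-coloring of $H-v$, and $\rho(H-v)=\rho(H)+4$ -- but everything that makes the theorem true is deferred.

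Concretely, two gaps. First, the ``auxiliary lemma'' lower-bounding the potential of proper subsets of a minimum counterexample is the entire technical core of Kostochka--Yancey; it is not obtained by ``the same minimality argument together with $3$-colorability of $H[W]$'', but by a gadget construction (collapsing $W$ onto a small graph, extracting a smaller critical graph, and doing the potential transfer carefully), and your sketch gives no such construction nor the resulting numerical bound. Second, the refinement you are asked for (non-$4$-Ore graphs satisfy $\rho\le 1$, i.e.\ the equality analysis) is itself a substantial separate argument in~\cite{kostochka2018brooks}; you only assert that ``analyzing the tight case'' yields the Ore structure. As written, the proposal is a roadmap of the known proof with its two hard steps explicitly left as obstacles, so it cannot be accepted as a proof of the statement.
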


Let us remark that every 4-Ore graph $H$ has exactly $(5|V(H)|-2)/3$ edges.
We say that a graph is \emph{exceptional} if it is $K_2$ or a dual $4$-Ore graph; note that every
exceptional graph $G$ satisfies $|E(G)| = (5|V(G)| - 8)/2$.
By Theorem~\ref{thm-tutteflow}, a plane graph has a $3$-coloring if and only if its dual has
a nowhere-zero $\zz$-flow.  Since deleting an edge in a plane graph corresponds to contracting the corresponding edge in the dual,
a plane graph is $4$-critical if and only if its dual is $\zz$-flow-critical.  Moreover, if $G$ is the dual of a plane graph $H$, then $|E(H)|=|E(G)|$ and by Euler's formula $|V(H)|=|E(G)|-|V(G)|+2$.
Hence, Theorem~\ref{thm-koyan} has the following consequence.

\begin{corollary}\label{cor-koyan}
If $G$ is a $\zz$-flow-critical planar graph, then
$$|E(G)|\le \frac{5|V(G)|-8}{2},$$
and if $G$ additionally is not exceptional, then
$$|E(G)|\le \frac{5|V(G)|-9}{2}.$$
\end{corollary}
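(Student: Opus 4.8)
The statement to prove is Corollary~\ref{cor-koyan}, which translates Theorem~\ref{thm-koyan} (Kostochka–Yancey density bound for 4-critical graphs) into the dual language of $\zz$-flow-critical planar graphs.

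The plan is essentially a bookkeeping argument via planar duality. First I would handle the trivial case: if $G = K_2$, then $|E(G)| = 1 = (5\cdot 2 - 8)/2$, so the first inequality holds with equality; and since $K_2$ is exceptional, there is nothing to prove for the second inequality. So assume $G$ is a $\zz$-flow-critical planar graph with at least $3$ vertices (hence at least two edges, as $G$ is $2$-connected by Observation~\ref{obs-basic}); fix a plane drawing of $G$ and let $H = G^\star$ be its plane dual.

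Next I would verify $H$ is 4-critical. By Theorem~\ref{thm-tutteflow} with $\AA = \zz$ (order $3$), $G$ has a nowhere-zero $\zz$-flow iff $H$ is $3$-colorable; since $G$ has no nowhere-zero $\zz$-flow, $\chi(H) \ge 4$. For criticality, recall that deleting an edge $e$ of $H$ corresponds to contracting the dual edge $e^\star$ of $G$, and that $(H - e)^\star = G / e^\star$ (up to suppressing the loop that arises if $e^\star$ has parallel edges — but $G$ has no parallel edges by Observation~\ref{obs-basic}, so $e^\star$ is not a loop in $H$ either, and $H$ is simple). Since $G$ is $\zz$-flow-critical, $G/e^\star$ has a nowhere-zero $\zz$-flow for every edge, so $H - e$ is $3$-colorable for every $e \in E(H)$; thus $H$ is $4$-critical. (One should double-check the correspondence handles the bridgeless/$2$-connected situation cleanly — $G$ is $2$-connected, so $H$ is $2$-connected as well, in particular bridgeless and loopless, so standard planar duality applies without degeneracies.)

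Then I would apply Theorem~\ref{thm-koyan} to $H$ and convert back using Euler's formula. We have $|E(H)| = |E(G)|$ and, since $G$ is connected and planar, Euler's formula gives $|V(G)| - |E(G)| + |F(G)| = 2$, and $|V(H)| = |F(G)|$, so $|V(H)| = |E(G)| - |V(G)| + 2$. Substituting into $|E(H)| \ge \frac{5|V(H)| - 2}{3}$ gives
$$|E(G)| \ge \frac{5(|E(G)| - |V(G)| + 2) - 2}{3} = \frac{5|E(G)| - 5|V(G)| + 8}{3},$$
i.e. $3|E(G)| \ge 5|E(G)| - 5|V(G)| + 8$, which rearranges to $|E(G)| \le \frac{5|V(G)| - 8}{2}$. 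For the second part, $G$ is exceptional iff $H$ is $4$-Ore (by the definition of "dual $4$-Ore" together with Observation~\ref{obs-dual}, noting $G = K_2$ corresponds to $H$ being a triangle... actually $K_2$ has no planar dual in the usual sense, so one simply notes separately that $K_2$ is exceptional by fiat). So if $G$ is not exceptional, $H$ is not $4$-Ore, and the stronger bound $|E(H)| \ge \frac{5|V(H)| - 1}{3}$ yields, by the same substitution, $3|E(G)| \ge 5|E(G)| - 5|V(G)| + 9$, i.e. $|E(G)| \le \frac{5|V(G)| - 9}{2}$.

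The only genuinely delicate point is making the duality dictionary airtight: that $\zz$-flow-criticality of $G$ matches $4$-criticality of $H$ edge-by-edge (including that no loops or parallel edges spoil the correspondence, which is where $2$-connectedness and simplicity of $G$ from Observation~\ref{obs-basic} are used), and that "exceptional'' matches "$4$-Ore'' via Observation~\ref{obs-dual}. Everything after that is a one-line substitution of Euler's formula, so I expect no real obstacle — the corollary is, as the excerpt says, a direct consequence of Theorem~\ref{thm-koyan}.
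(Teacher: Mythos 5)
Your proposal is correct and follows essentially the same route as the paper: dualize via Theorem~\ref{thm-tutteflow} (edge deletion in $H$ corresponding to edge contraction in $G$, so $4$-criticality of the dual matches $\zz$-flow-criticality), apply Theorem~\ref{thm-koyan}, and translate with $|E(H)|=|E(G)|$ and $|V(H)|=|E(G)|-|V(G)|+2$ from Euler's formula, with $K_2$ handled separately exactly as the paper remarks (its dual being the one-vertex graph with a loop, outside the scope of Theorem~\ref{thm-koyan}).
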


Note that the special case of $K_2$ in Corollary~\ref{cor-koyan} is dual to the single-vertex graph with a loop,
which is 4-critical but not mentioned in Theorem~\ref{thm-koyan} which only considers loopless graphs.
Let us remark that $4$-Ore graphs are known to be $4$-critical,
and thus the following claim (implying that we cannot exclude any of the tight cases in Corollary~\ref{cor-koyan}) follows by duality.

\begin{observation}\label{obs-orecrit}
Every dual $4$-Ore graph is $\zz$-flow-critical.
\end{observation}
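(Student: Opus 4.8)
The plan is to deduce this from two facts that are essentially already available: that every $4$-Ore graph is $4$-critical (recalled just above), and Tutte's duality (Theorem~\ref{thm-tutteflow}) between $3$-colorings of a plane graph and nowhere-zero $\zz$-flows in its dual. First I would fix notation. If $G$ is dual $4$-Ore, then by definition some plane drawing of $G$ is the dual of a plane drawing of a $4$-Ore graph $H$, so $G=H^\star$. Since $H$ is $4$-Ore it is $4$-critical, hence connected, hence so is $G$. Applying Theorem~\ref{thm-tutteflow} with $\AA=\zz$ to the connected plane graph $H$ and using $\chi(H)=4$, we conclude that $G=H^\star$ has no nowhere-zero $\zz$-flow. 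Thus $(G,0)$ is connected with no nowhere-zero flow, and it remains only to exhibit a nowhere-zero flow in every proper contraction of $(G,0)$.

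To that end, let $\PP$ be a non-trivial $G$-connected partition of $V(G)$. Picking a spanning tree in each part of $\PP$ and letting $F$ be the union of their edge sets, $F$ is a forest of $G$ with $|F|\ge 1$, and contracting $\PP$ is the same as contracting $F$, so $(G,0)/\PP$ equals $(G/F,0)$. Let $F^\star\subseteq E(H)$ consist of the edges of $H$ dual to the edges of $F$. Since $F$ is acyclic in $G=H^\star$ and the cycles of $H^\star$ are exactly the bonds of $H$, the set $F^\star$ contains no bond of $H$; hence $H-F^\star$ is a connected plane graph, and by the deletion--contraction duality for plane graphs its dual is $G/F$. Because $|F^\star|=|F|\ge 1$, the graph $H-F^\star$ is a proper subgraph of the $4$-critical graph $H$, hence $3$-colorable. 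Applying Theorem~\ref{thm-tutteflow} once more, $G/F=(H-F^\star)^\star$ has a nowhere-zero $\zz$-flow; equivalently, $(G,0)/\PP=(G/F,0)$ has a nowhere-zero flow. Therefore $(G,0)$ is flow-critical, i.e., $G$ is $\zz$-flow-critical.

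There is no serious difficulty here; the step that needs the most care is the last one, because flow-criticality demands a nowhere-zero flow in \emph{every} proper contraction of $(G,0)$, not merely in the single-edge contractions $(G,0)/e$ (for which one could argue directly from $H$ being $4$-critical, the single-edge case being $F=\{e\}$ above). This is precisely why the middle paragraph is phrased in terms of contracting an arbitrary forest $F$ and the matching deletion--contraction duality for plane graphs, rather than invoking Tutte's theorem edge by edge.
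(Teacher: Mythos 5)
Your proof is correct, but it distributes the work quite differently from the paper. The paper's proof spends almost all of its effort establishing that every $4$-Ore graph is $4$-critical: it proves by induction on the Ore-sum construction that every proper subgraph $H'$ of a $4$-Ore graph satisfies $|E(H')|\le (5|V(H')|-5)/3$, notes that $4$-Ore graphs are not $3$-colorable, and then rules out a proper $4$-critical subgraph by Theorem~\ref{thm-koyan}; the passage to the dual statement is a single sentence resting on the remark, made before Corollary~\ref{cor-koyan}, that a plane graph is $4$-critical if and only if its dual is $\zz$-flow-critical. You do the opposite: you import the $4$-criticality of $4$-Ore graphs as a known classical fact (which the paper itself asserts just before the observation, so this is legitimate, though it makes your argument less self-contained than the paper's Kostochka--Yancey-based derivation), and instead you flesh out the duality step the paper leaves implicit---in particular you correctly observe that flow-criticality here quantifies over all proper contractions, not just single-edge ones, and you handle this by contracting a spanning forest $F$ of the parts and dualizing to an edge set $F^\star$ containing no bond, so that $H-F^\star$ stays connected and Theorem~\ref{thm-tutteflow} applies. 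Two small remarks: an equivalent shortcut is that a nowhere-zero flow survives further contraction, so single-edge contractions suffice; and strictly speaking $(H-F^\star)^\star$ may contain loops (dual to bridges of $H-F^\star$), which should be discarded to match the paper's convention for $G/\PP$, though this does not affect the existence of a nowhere-zero flow. Both routes are sound: the paper's buys a self-contained proof at the cost of re-deriving a classical fact, while yours buys brevity by citation and is more explicit about the correspondence between subgraphs of $H$ and contractions of $G$.
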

\begin{proof}
First, let us show the following auxiliary claim: If $H$ is a proper subgraph of a $4$-Ore graph $G$,
then
$$|E(H)|\le\frac{5|V(H)|-5}{3}.$$
We prove the claim by induction on the construction of $G$.  If $G=K_4$, then the claim follows by a straightforward
case analysis.  Hence, we can assume that $G$ is obtained as an Ore sum of smaller $4$-Ore graphs $G_1$ and $G_2$.
Let $z$, $x_1$, $x_2$, $y_1$, and $y_2$ be as in the definition of Ore sum.  Let $x$ and $y$ be the vertices of $G$ obtained
by the identification of $x_1$ with $x_2$ and $y_1$ with $y_2$.   Without loss of generality, we can assume $H$ is connected,
as otherwise the bound follows by considering each component of $H$ separately.
If $x,y\not\in V(H)$, then $H$ is a proper subgraph of $G_1$ or $G_2$ and the bound follows by the induction hypothesis.

Suppose now that $|V(H)\cap \{x,y\}|=1$, say $x\in V(H)$ but $y\not\in V(H)$.  For $i\in\{1,2\}$, let $H_i$ be the graph obtained from $H-(V(G_{3-i})\setminus \{x\})$
by renaming $x$ to $x_i$.  Then $H_i$ is a proper subgraph of $G_i$, and by the induction hypothesis, we have
$$|E(H)|=|E(H_1)|+|E(H_2)|\le \frac{5|V(H_1)|-5}{3}+\frac{5|V(H_2)|-5}{3}=\frac{5|V(H)|-5}{3}.$$
Finally, let us consider the case $x,y\in V(H)$.  Let $H_1$ be the graph obtained from $H-(V(G_2)\setminus \{x_2,y_2\})$ by identifying
$x$ with $y$ and renaming the resulting vertex to $z$, so that $H_1\subseteq G_1$.  Let $H_2$ be the graph obtained from $H-(V(G_1)\setminus \{x_1,y_1\})$
by renaming $x$ and $y$ to $x_2$ and $y_2$ and adding the edge $x_2y_2$, so that $H_2\subseteq G_2$.
Note that if $H_i=G_i$ for some $i\in\{1,2\}$, then $H_i$ is a $4$-Ore graph,
and thus $|E(H_i)|=\frac{5|V(H_i)|-2}{3}$.  However, since $H\neq G$, we have $H_1\neq G_1$ or $H_2\neq G_2$.
Therefore, by the induction hypothesis, we have
$$|E(H)|=|E(H_1)|+|E(H_2)|-1\le \frac{5|V(H_1)|-2}{3}+\frac{5|V(H_2)|-2}{3}-2=\frac{5|V(H)|-5}{3},$$
as required.

Consider any $4$-Ore graph $G$.  It is easy to prove by induction that $G$ is not 3-colorable.
We claim that $G$ is $4$-critical---otherwise, $G$ would have a proper $4$-critical subgraph $H$,
and we have shown that $|E(H)|\le \frac{5|V(H)|-5}{3}$, in contradiction to Theorem~\ref{thm-koyan}.
Since every $4$-Ore graph is $4$-critical, it follows that every dual $4$-Ore graph is $\zz$-flow-critical.
\end{proof}

We will also need the following property of exceptional graphs.

\begin{lemma}\label{lemma-single}
Let $G$ be an exceptional graph.
If $\beta$ is a non-zero $\zz$-boundary for $G$, then $(G,\beta)$ has a nowhere-zero flow.
\end{lemma}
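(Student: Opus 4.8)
The plan is to induct on $|V(G)|$ over all exceptional graphs, using the recursive description of dual $4$-Ore graphs: each such graph is either $K_4$ or a gluing of two smaller dual $4$-Ore graphs. The base cases are $K_2$ and $K_4$. For $K_2$, writing its edge as $e=uv$, the hypotheses $\beta\neq 0$ and $\beta(u)+\beta(v)=0$ force $\{\beta(u),\beta(v)\}=\{1,2\}$, and $f(e_u)=\beta(v)$, $f(e_v)=\beta(u)$ is a nowhere-zero flow. For $K_4$, I would invoke the standard equivalence between nowhere-zero $\zz$-flows and orientations: $(K_4,\beta)$ has a nowhere-zero $\zz$-flow if and only if $K_4$ has an orientation $D$ with $d^-_D(x)-d^+_D(x)\equiv\beta(x)\pmod 3$ for every vertex $x$ (orient each edge towards the endpoint at which the flow value equals $1$); since $K_4$ is $3$-regular this amounts to $d^-_D(x)\equiv-\beta(x)\pmod 3$. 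Up to relabeling vertices and negating $\beta$, a nonzero $\zz$-boundary for $K_4$ is one of $(0,0,1,2)$, $(0,1,1,1)$, $(1,1,2,2)$, and in each case a short check produces the required orientation — e.g.\ for $(0,1,1,1)$ the in-degrees must form the sequence $(0,2,2,2)$, realized by making the first vertex a source and orienting the remaining triangle cyclically; the other two cases are analogous (in-degree sequences $(0,1,2,3)$ and $(1,1,2,2)$).

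For the inductive step, write $G$ as a gluing of smaller dual $4$-Ore graphs $G_1,G_2$: delete an edge $e=uv$ from $G_2$ and identify its ends with two vertices of $G_1$, so that $E(G)$ is the disjoint union of $E(G_1)$ and $E(G_2)\setminus\{e\}$. I would split $\beta$ into $\zz$-boundaries $\beta_1$ on $G_1$ and $\beta_2$ on $G_2$ that agree with $\beta$ on $V(G_i)\setminus\{u,v\}$, take nowhere-zero flows $f_1$ in $(G_1,\beta_1)$ and $f_2$ in $(G_2,\beta_2)$ from the induction hypothesis, and combine $f_1$ with the restriction of $f_2$ to $E(G_2)\setminus\{e\}$ into a function $f$ on $E(G)$; since the flow-conservation equation at each vertex of $G$ splits into a contribution from $G_1$ and one from $G_2-e$, $f$ is a flow in $(G,\beta)$ as soon as the boundary values match, in the spirit of the combination used in the proof of Observation~\ref{obs-restr}. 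Setting $t=f_2(e_u)$ (so $t\neq 0$ as $f_2$ is nowhere-zero), the combined $f$ is a nowhere-zero flow in $(G,\beta)$ precisely when $\beta_1(u)=\beta(u)-\beta_2(u)+t$ and $\beta_1(v)=\beta(v)-\beta_2(v)-t$, and with these values $\beta_1$ is automatically a valid boundary on $G_1$. The only genuine issue is ensuring both $\beta_1$ and $\beta_2$ are nonzero (the induction hypothesis says nothing — and is false — for the zero boundary). The decisive trick is that choosing $\beta_2(u):=\beta(u)$ yields $\beta_1(u)=t\neq 0$, hence $\beta_1\neq 0$; and $\beta_2\neq 0$ then holds as long as $\beta(u)\neq 0$ or $\beta$ is nonzero on $V(G_2)\setminus\{u,v\}$. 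Using the symmetric statement at $v$, the only leftover case is $\beta(u)=\beta(v)=0$ with $\beta$ vanishing on $V(G_2)\setminus\{u,v\}$; then $\beta$ is nonzero on $V(G_1)\setminus\{u,v\}$, so taking $\beta_2(u)=1$, $\beta_2(v)=2$ makes $\beta_2\neq 0$ while $\beta_1$ still inherits a nonzero value on $V(G_1)\setminus\{u,v\}$.

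I expect the main obstacle to be the bookkeeping in the inductive step: verifying that one can always partition $\beta$ so that neither side becomes the zero boundary, and getting the sign conventions in the flow combination right. The choice $\beta_2(u):=\beta(u)$ is what makes the case analysis collapse. The $K_4$ base case is finite but should likewise be organized via the orientation reformulation rather than by solving flow equations by hand.
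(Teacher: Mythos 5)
Your proposal is correct and follows essentially the same route as the paper: induction over the gluing decomposition of dual $4$-Ore graphs, splitting $\beta$ into boundaries on $G_1$ and $G_2$ with a correction at $u,v$ coming from the flow value on the deleted edge $e$, and applying the induction hypothesis to both pieces. The only notable (and harmless) difference is that by choosing $\beta_2(u)=\beta(u)$ (or the symmetric choice at $v$, with the $(1,2)$ adjustment in the leftover case) you keep both sub-boundaries nonzero, so you never need the flow-criticality of dual $4$-Ore graphs (Observation~\ref{obs-orecrit}), which the paper invokes in the subcase where its $\beta_2$ happens to be zero; your $K_4$ base case via mod-$3$ orientations and the sign-convention bookkeeping are fine.
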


\begin{proof}
We prove the claim by induction on the number of vertices of $G$.
It is easy to verify that the claim is true for $K_2$ and $K_4$.  Hence, suppose that $G$ is obtained
by gluing dual $4$-Ore graphs $G_1$ and $G_2$.
Let $u_1$, $v_1$, $u_2$, $v_2$, and $e=u_2v_2$ be as in the definition of gluing. Let $u$ and $v$ be the vertices of $G$ obtained by identifying $u_1,u_2$ and $v_1,v_2$, respectively.

Let $c_1=\sum_{x\in V(G_1)\setminus\{u_1,v_1\}} \beta(x)$.
Suppose first that there exists $x_0\in V(G_1)\setminus\{u_1,v_1\}$ such that $\beta(x_0)\neq 0$.
Let $\beta_2(y)=\beta(y)$ for $y\in V(G_2)\setminus \{u_2,v_2\}$, $\beta_2(v_2)=\beta(v)$,
and $\beta_2(u_2)=\beta(u)+c_1$.
Note that $(G_2,\beta_2)$ has a flow $f_2$ which is nowhere-zero except possibly for the edge $e$,
by the induction hypothesis if $\beta_2$ is non-zero, and by $\zz$-flow-criticality of $G_2$ and Observation~\ref{obs-basic} otherwise.
Let $\beta_1(x)=\beta(x)$ for $x\in V(G_1)\setminus\{u_1,v_1\}$
and $\beta_1(x)=\beta(x)-\sum_{h=xy\in E(G_2),h\neq e} f_2(h_y)$ for $x\in \{u_1,v_1\}$.
Note that $\beta_1$ is non-zero, since $\beta_1(x_0)=\beta(x_0)\neq 0$.
By the induction hypothesis, $(G_1,\beta_1)$ has a nowhere-zero flow, which combines with $f_2$ to a nowhere-zero flow for $(G,\beta)$.

Therefore, we can assume $\beta(x)=0$ for every $x\in V(G_1)\setminus\{u_1,v_1\}$, and in particular $c_1=0$
and the restriction $\beta'_2$ of $\beta$ to $V(G_2)$ must be non-zero.
By the induction hypothesis, there exists a nowhere-zero flow $f_2$ for $(G_2,\beta'_2)$.
By symmetry, we can assume that $f_2(e_{v_2})=1$.  Let $\beta'_1$ be the boundary function for $G_1$ which is zero everywhere
except for $u_1$ and $u_2$, with $\beta'_1(u_1)=1$ and $\beta'_1(v_1)=-1$.  By the induction hypothesis,
$(G_1,\beta'_1)$ has a nowhere-zero flow, which combines with $f_2$ to a nowhere-zero flow in $(G,\beta)$.
\end{proof}

Lemma \ref{lemma-single} has the following useful consequence.
\begin{corollary}\label{cor-zerobo}
Let $\g=(G,\beta)$ be a flow-critical $\zz$-bordered graph.  Let $B\subseteq V(G)$ be a set such that $G[B]$ is exceptional.
If $\beta(x)=0$ for every $x\in V(G)\setminus B$, then $\beta$ is the zero function.
\end{corollary}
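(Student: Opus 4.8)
The plan is to reduce the statement to a single, well-chosen contraction and then to exploit two features of $\zz$: that the negation of a nowhere-zero flow is again a nowhere-zero flow, and that, by Lemma~\ref{lemma-single}, an exceptional graph admits a nowhere-zero flow for \emph{every} nonzero boundary, so that the zero boundary is its only obstruction. Before that, I would clear away degenerate cases. As $\g$ is flow-critical, $G$ is connected, so $\sum_{v\in V(G)}\beta(v)=0$; since $\beta$ vanishes outside $B$, this gives $\sum_{v\in B}\beta(v)=0$, hence the restriction of $\beta$ to $B$ is a $\zz$-boundary for $G[B]$. If $|B|\le 1$, then $\beta$ is supported on at most one vertex and sums to zero, so $\beta=0$. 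If $B=V(G)$, then $G=G[B]$ is exceptional, and if $\beta\ne 0$ then Lemma~\ref{lemma-single} produces a nowhere-zero flow in $(G,\beta)=\g$, contradicting flow-criticality; so $\beta=0$ in this case as well. Hence I may assume $2\le |B|$ and $B\subsetneq V(G)$.

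In this remaining case, $\PP_B$ is a non-trivial $G$-connected partition (as $G[B]$ is connected, being exceptional), so $\g/B$ is a proper contraction of $\g$ and therefore has a nowhere-zero flow $f$. The decisive observation is that $\beta/B$ is \emph{literally} the zero boundary: at the contracted vertex its value is $\sum_{v\in B}\beta(v)=0$, and at every other vertex it agrees with $\beta$, which vanishes there. So $f$ is a nowhere-zero flow of $(G/B,0)$, and hence so is $-f$. I then apply Observation~\ref{obs-restr} to $\g$ and $B$ — which is legitimate because $\g$ has no nowhere-zero flow and $G[B]$ is connected — once with the flow $f$ and once with the flow $-f$. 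This yields $\zz$-boundaries $\beta_f$ and $\beta_{-f}$ for $G[B]$ such that neither $(G[B],\beta_f)$ nor $(G[B],\beta_{-f})$ has a nowhere-zero flow; and since $G[B]$ is exceptional, Lemma~\ref{lemma-single} forces $\beta_f=\beta_{-f}=0$.

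To finish, I would combine the two instances of the formula from Observation~\ref{obs-restr}. Setting $\sigma(u)=\sum_{e=uv\in E(G)\setminus E(G[B])}f(e_v)$, we have $\beta_f(u)=\beta(u)-\sigma(u)$ and $\beta_{-f}(u)=\beta(u)+\sigma(u)$ for each $u\in B$, so $0=\beta_f(u)+\beta_{-f}(u)=2\beta(u)$. Since $2$ is invertible in $\zz$, this gives $\beta(u)=0$ for every $u\in B$; together with the hypothesis that $\beta$ vanishes off $B$, we conclude $\beta=0$.

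I do not expect a serious obstacle: the argument becomes short once one thinks of feeding \emph{both} $f$ and $-f$ into Observation~\ref{obs-restr}. The step that needs care is checking that $\beta/B$ is exactly the zero boundary, which is where the hypothesis ``$\beta(x)=0$ for $x\notin B$'' (together with connectivity of $G$) is used, and which is what lets $-f$ be a flow for the \emph{same} bordered graph $\g/B$. The other indispensable ingredient is Lemma~\ref{lemma-single}: it pins down the residual boundary $\beta_f$ to be $0$, not merely some boundary with no nowhere-zero flow; without the exceptional structure of $G[B]$ this step would fail.
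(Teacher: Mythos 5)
Your proof is correct and follows essentially the same route as the paper: contract $B$, note that $\beta/B$ is the zero boundary so that both $f$ and $-f$ are nowhere-zero flows of $\g/B$, apply Observation~\ref{obs-restr} to each and invoke Lemma~\ref{lemma-single} to get $\beta_f=\beta_{-f}=0$, and conclude $2\beta(u)=0$, hence $\beta(u)=0$ in $\zz$. The only difference is your preliminary case analysis, which is unnecessary (an exceptional $G[B]$ has at least two vertices, and the case $B=V(G)$ is covered by the same contraction argument), but harmless.
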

\begin{proof}
Since $G[B]$ is exceptional, it is connected and has at least two vertices.  Hence $\g/B$ has a nowhere-zero flow $f$.
Let $\beta_f$ be the $\zz$-boundary for $G[B]$ defined in Observation~\ref{obs-restr}, so $(G[B],\beta_f)$ does not have
a nowhere-zero flow.  By Lemma~\ref{lemma-single}, it follows that $\beta_f$ is the zero function.
Since $\beta(x)=0$ for every $x\in V(G)\setminus B$, $-f$ (the flow assigning to every edge the value opposite to the
value assigned by $f$) is also a nowhere-zero flow in $\g/B$, and by the same argument, $\beta_{-f}$ is also the
zero function.  However, that means
$$0=\beta_f(v)+\beta_{-f}(v)=2\beta(v)=-\beta(v)$$
for every $v\in B$, and thus $\beta$ has zero values on $B$ as well.
\end{proof}

\subsection{Genus}

We need the following observation on Euler genus.

\begin{lemma}\label{lemma-conncon}
Let $G$ be a graph and $B$ a subset of its vertices.  If $G[B]$ is connected, then
$$g(G)\ge g(G/B)+g(G[B]).$$
\end{lemma}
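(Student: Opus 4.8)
The plan is to argue by contracting $B$ while tracking how the genus can only decrease by at least the genus needed to embed $G[B]$. First I would take an embedding of $G$ in a surface $\Sigma$ of Euler genus $g(G)$ realizing the minimum. Restricting this embedding to the subgraph $G[B]$ gives an embedding of $G[B]$ in $\Sigma$; since $G[B]$ is connected, by a standard fact about embeddings of connected graphs one can find a closed disk $D$ in $\Sigma$ (or rather, a regular neighborhood of $G[B]$ whose boundary is a single simple closed curve, after possibly re-embedding within a handle/crosscap structure) — more precisely, the idea is that a connected graph embedded in $\Sigma$ lies in a subsurface $\Sigma_0\subseteq\Sigma$ with a single boundary curve, and $\Sigma_0$ can be taken to have Euler genus at least $g(G[B])$, while capping off its boundary with a disk leaves a surface $\Sigma_1$ with $\mathrm{eg}(\Sigma_1)+\mathrm{eg}(\Sigma_0)\le \mathrm{eg}(\Sigma)$ by additivity of Euler genus under this decomposition (gluing two surfaces along a circle adds Euler genera). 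Contracting all of $B$ to a single vertex inside the disk $D$ yields an embedding of $G/B$ in $\Sigma_1$, hence $g(G/B)\le \mathrm{eg}(\Sigma_1)\le \mathrm{eg}(\Sigma)-\mathrm{eg}(\Sigma_0)\le g(G)-g(G[B])$, which rearranges to the claimed inequality.

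The key steps, in order, are: (1) fix a minimum-genus embedding of $G$ in $\Sigma$; (2) restrict to $G[B]$ and invoke connectivity to locate a subsurface $\Sigma_0$ containing $G[B]$ with a single boundary circle and $\mathrm{eg}(\Sigma_0)\ge g(G[B])$ — this is where one uses that $G[B]$ is connected, so that a spanning tree of $G[B]$ together with a regular neighborhood sweeps out a connected subsurface; (3) cap the boundary circle of $\Sigma_0$ by a disk to get $\Sigma_1$ and observe $\mathrm{eg}(\Sigma)=\mathrm{eg}(\Sigma_0)+\mathrm{eg}(\Sigma_1)$; (4) contract $B$ within the capping disk and conclude $g(G/B)\le\mathrm{eg}(\Sigma_1)$. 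Alternatively — and this may be cleaner to write — one can avoid surface surgery entirely and argue additively on the genus parameter directly: take a minimum embedding of $G[B]$ in a surface of genus $g(G[B])$, take a minimum embedding of $G/B$ in a surface of genus $g(G/B)$ with the contracted vertex $b$ on some face, cut a small disk around $b$, and paste in the embedding of $G[B]$ through the face of $G[B]$ that the edges leaving $B$ need to reach. The obstacle there is that the edges of $G$ leaving $B$ attach to various vertices of $B$ that need not all lie on one face of the chosen embedding of $G[B]$, so this direction does not obviously give an embedding of $G$; hence the first direction (restrict a minimum embedding of $G$, then contract) is the safe one.

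I expect the main obstacle to be step~(2): making precise the claim that a connected graph embedded in a surface $\Sigma$ sits inside a subsurface with a single boundary curve whose Euler genus is at least the graph's genus, and that capping this off is additive on Euler genus. This is essentially the statement that Euler genus is additive over the blocks/pieces arising from cutting a surface along a non-separating or separating simple closed curve, combined with the fact that a regular neighborhood of a connected embedded graph is a surface with boundary that ``uses up'' at least $g(G[B])$ of the genus. One can cite the standard theory (e.g., the additivity of Euler genus under amalgamation along disks, as in Mohar–Thomassen, \emph{Graphs on Surfaces}), or give a short direct argument: delete the interior of a regular-neighborhood face structure of $G[B]$ from $\Sigma$, observe each resulting piece has smaller Euler genus, and the piece containing the rest of $G$ (with $B$ now living on its boundary) can receive the contraction. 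Once that topological input is in hand, the inequality $g(G/B)+g(G[B])\le g(G)$ is immediate from the genus additivity. I would keep the write-up short, doing the reduction to a cited surface-additivity lemma rather than reproving it.
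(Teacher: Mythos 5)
Your overall plan---restrict a minimum-genus embedding of $G$ to excise a subsurface around the drawing of $G[B]$, cap it off, and redraw the contracted vertex in the cap---is the same direction the paper takes, and you are right to discard the ``paste two minimum embeddings'' alternative. But the genuine gap is in your step (2), exactly where you anticipated trouble, and it is not just a matter of citing surface surgery: the statement you need is false as formulated. For step (4) to work you need a subsurface $\Sigma_0$ with a \emph{single} boundary circle that contains the drawing of $G[B]$ \emph{and keeps the rest of $G$ in the complement}; such a $\Sigma_0$ need not exist, and in your fallback direct argument ``the piece containing the rest of $G$'' need not be a single piece. For instance, let $\Sigma$ be the sphere, let $G[B]$ be a quadrilateral drawn as a round circle, and let $G-B$ consist of two vertices, one drawn in each face of that circle, each joined to two vertices of $B$. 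A regular neighborhood of $G[B]$ is an annulus whose two boundary circles lie in different complementary disks and cannot be joined by a strip avoiding $G[B]$, and any subsurface with one boundary circle disjoint from the drawing that contains $G[B]$ must contain one of the two faces of the quadrilateral entirely, hence also one of the two vertices of $G-B$. (The inequality is of course trivial in this example; the point is that the step is unavailable in general. Note also that the requirement $\mathrm{eg}(\Sigma_0)\ge g(G[B])$ is automatic for \emph{any} subsurface containing $G[B]$---cap its boundary circles and $G[B]$ embeds in the result---so the single-boundary condition is the only real content of step (2), and it is what fails.)

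The repair, and the way the paper argues, is to give up on a single boundary circle and to cap with a planar surface having the same number of holes rather than with a disk. The paper considers the induced drawing of $G-B$: since $G[B]$ is connected, it lies in a single face $f$ of $G-B$. Taking $\Sigma'$ to be the surface with boundary whose interior is homeomorphic to $f$ (it may have many boundary circles), one has $g(\Sigma')\ge g(G[B])$, and replacing $\Sigma'$ by a sphere $f'$ with the same number of holes yields a surface $\Sigma''$ with $g(\Sigma'')=g(\Sigma)-g(\Sigma')\le g(G)-g(G[B])$, by additivity of the Euler characteristic along the boundary circles. Because $f'$ is glued back along \emph{all} of the old boundary circles, the whole of $G-B$ stays embedded exactly as before, and the contracted vertex can be placed inside the planar piece $f'$ and joined there, without crossings, to the attachment points on all of its boundary circles. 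Your regular-neighborhood variant can be repaired in the same way: excise a thin neighborhood $N$ of $G[B]$ meeting the rest of the drawing only in the edges leaving $B$, and glue back a sphere with as many holes as $N$ has boundary circles; the same bookkeeping gives the bound. With that modification your argument coincides with the paper's proof.
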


\begin{proof}
Consider a drawing of $G$ on a surface $\Sigma$ of Euler genus $g(G)$, and the
induced drawing of $G-B$.  Since $G[B]$ is connected, $B$ is drawn within a single face $f$ of $G-B$.
Let $\Sigma'$ be the surface with boundary whose interior is homeomorphic to $f$; note that $G[B]$
can be drawn in $\Sigma'$, and thus $g(\Sigma')\ge g(G[B])$.  Let $\Sigma''$ be the surface obtained from $\Sigma-f$
by gluing it with the sphere $f'$ with the same number of holes as $\Sigma'$; we have
$g(\Sigma'')=g(\Sigma)-g(\Sigma')\le g(G)-g(G[B])$.  Moreover, $G/B$ can be drawn in $\Sigma''$, with the vertex
obtained by the identification of the vertices in $B$ drawn in $f'$, and thus
$g(G/B)\le g(\Sigma'')$.
\end{proof}

\section{The density of flow-critical graphs on surfaces}\label{sec-main}

Let us now proceed with the proof of Theorem~\ref{thm-main2}, which we now reformulate and strengthen.
For a graph $G$, let us define
$$\pi(G)=5|V(G)|-2|E(G)|+5g(G).$$
Note that if $G$ is exceptional, then $\pi(G)=8$.
We say that $G$ is \emph{sparse} if $G$ is exceptional or $\pi(G)\ge 9$.

\begin{theorem}\label{thm-main3}
Let $\g=(G,\beta)$ be a $\zz$-bordered graph.  If $\g$ is flow-critical,
then $G$ is sparse.
\end{theorem}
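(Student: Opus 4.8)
The plan is to run a minimal-counterexample / potential-method argument in the style of Kostochka and Yancey. Suppose $\g=(G,\beta)$ is a flow-critical $\zz$-bordered graph that is not sparse; among all such counterexamples choose one minimizing $|V(G)|$, and then (secondarily) maximizing $|E(G)|$. So $G$ is not exceptional and $\pi(G)\le 8$. Since $\g$ has no nowhere-zero flow but $G-e$ does for every $e$ (Observation~\ref{obs-basic}), and since $G$ is $2$-connected with no parallel edges, we have some control on degrees: $G$ has minimum degree at least $3$, because a vertex of degree $\le 2$ could be handled by splitting off (Observation~\ref{obs-split}) or by the $2$-connectivity argument. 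The potential $\pi$ behaves additively-ish under the operations we will use: if $G[B]$ is connected with $2\le|B|<|V(G)|$, then contracting $B$ and using Lemma~\ref{lemma-conncon} ($g(G)\ge g(G/B)+g(G[B])$) gives
$$\pi(G) = \pi(G[B]) + \pi(G/B) - 8 + \bigl(5g(G) - 5g(G/B) - 5g(G[B])\bigr) \ge \pi(G[B]) + \pi(G/B) - 8,$$
where I used $5|V(G)| = 5|V(G[B])| + 5|V(G/B)| - 5$ and $|E(G)| = |E(G[B])| + |E(G/B)| + (\text{edges between } B \text{ and } V(G)\setminus B \text{ not already counted})$; the edge bookkeeping needs care but is routine. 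The point is that $\pi$ is roughly subadditive, so if both $G[B]$ and $G/B$ were sparse we'd get $\pi(G)\ge 9$ unless both are exceptional, in which case a separate analysis (using Corollary~\ref{cor-zerobo} and Lemma~\ref{lemma-single}) is needed.

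\textbf{Key steps.} First I would establish the local structural reductions: $G$ has minimum degree $\ge 3$; $G$ has no "reducible" small configurations, in particular no vertex $v$ of degree $3$ whose contraction with a neighbor keeps things flow-critical in a smaller graph (here flow-criticality of contractions, Observation~\ref{obs-basic}, and the restriction lemma Observation~\ref{obs-restr} do the work). Second, the core of the argument: take any proper $G$-connected partition, or more specifically any set $B$ with $G[B]$ connected and $1<|B|<|V(G)|$, apply Observation~\ref{obs-restr} to get that $(G[B],\beta_f)$ has no nowhere-zero flow for the boundary $\beta_f$ induced by a nowhere-zero flow $f$ on $\g/B$ (which exists by flow-criticality since the contraction is proper). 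Then $(G[B],\beta_f)$ contains a flow-critical bordered subgraph/contraction; by minimality of $|V(G)|$, this flow-critical object must be sparse — and crucially its vertex set is a contraction of $G[B]$, so one can bound $|E(G[B])|$ in terms of $|V(G[B])|$ and $g(G[B])$. Symmetrically, $\g/B$ is flow-critical (it's a contraction of $\g$, hence has no nowhere-zero flow by definition of flow-critical... wait — one must check $\g/B$ is itself flow-critical or at least contains a flow-critical contraction on strictly fewer vertices; since every proper contraction of $\g$ has a nowhere-zero flow, $\g/B$ has one, so actually $\g/B$ is \emph{not} flow-critical). This asymmetry is the subtlety: only the $G[B]$ side gives us a minimality-induction handle, the $G/B$ side gives us nothing directly. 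So the real engine must be: for \emph{every} such $B$, $G[B]$ is sparse-ish relative to its own genus, and then a global discharging / counting over a well-chosen family of sets $B$ (e.g. neighborhoods of low-degree vertices, or the partition classes of an optimal decomposition) yields $\pi(G)\ge 9$ or forces $G$ exceptional.

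\textbf{Handling the exceptional boundary case.} A genuinely separate branch: when various $G[B]$ that arise are exceptional. Here Lemma~\ref{lemma-single} says exceptional graphs absorb every nonzero boundary, so if $(G[B],\beta_f)$ has no nowhere-zero flow and $G[B]$ is exceptional then $\beta_f=0$; combined with Corollary~\ref{cor-zerobo} this propagates zeros of $\beta$ and eventually pins down $\beta$ or forces $G$ itself to be exceptional/dual-$4$-Ore, contradicting that $G$ is a non-sparse counterexample (since exceptional graphs \emph{are} sparse by definition and by Observation~\ref{obs-orecrit} are flow-critical with $\pi=8$). In the purely planar case, Corollary~\ref{cor-koyan} should recover the conclusion directly, so the new content is entirely in how genus is spent.

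\textbf{Main obstacle.} I expect the hard part to be the global counting step — choosing the right potential-based weighting and the right family of separating sets so that the "subadditivity with a $-8$ correction" of $\pi$ doesn't accumulate into a bound worse than $9$. Because each contraction step costs $8$ in the $\pi$ accounting while an exceptional piece only gives back $8$, naive induction breaks even and never improves; Kostochka–Yancey circumvent this with a carefully tuned potential and by identifying exactly which configurations are "bad" (the $4$-Ore / dual-$4$-Ore ones). Replicating that for bordered graphs, while simultaneously tracking the genus term $5g(G)$ through Lemma~\ref{lemma-conncon} (which is only an \emph{inequality}, so genus can be "lost" at contractions — actually helpful, since it only helps the bound go the right way), and correctly isolating the equality cases (exceptional graphs), is where the real work lies. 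I would structure the write-up so that the genus bookkeeping is quarantined into Lemma~\ref{lemma-conncon}-style black boxes and the combinatorial heart mirrors the planar Kostochka–Yancey argument as closely as possible.
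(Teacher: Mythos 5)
Your write-up is a strategy outline rather than a proof, and the two places where you yourself say "this is where the real work lies" are exactly the places where the paper's actual content sits; as it stands there is a genuine gap. First, your central inequality points the wrong way. You try to lower-bound $\pi(G)$ by pieces, $\pi(G)\ge\pi(G[B])+\pi(G/B)-8$ (the correct constant is $-5$, since $|E(G)|=|E(G[B])|+|E(G/B)|$ and the vertex counts overlap in a single vertex), but as you correctly observe, $\g/B$ has a nowhere-zero flow and is therefore \emph{not} flow-critical, so minimality gives no lower bound on $\pi(G/B)$ and this direction never closes. The paper's engine is the reverse inequality: for a minimal counterexample and \emph{every} $G$-connected partition $\PP$ with at least two parts, $\pi(G/\PP)\le\pi(G)-w(G,\PP)$, where $w(G,\PP)=4n(G,\PP)+3k(G,\PP)$ weights non-trivial parts according to whether they induce exceptional subgraphs (Lemma~\ref{lemma-add}, hence $\pi(G/\PP)<9-w(G,\PP)$, Corollary~\ref{cor-add}). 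It is proved by reverse induction on $|\PP|$: inside a non-trivial part $B$ one applies Observation~\ref{obs-restr} to get a bordered graph $(G[B],\beta_f)$ with no nowhere-zero flow, takes a flow-critical contraction $\g_B/\QQ$ of it, refines $\PP$ to $\PP'$ using $\QQ$, and uses minimality ($\pi(G[B]/\QQ)\ge 9$ unless exceptional, in which case $\pi=8$ and $\QQ$ is trivial by Lemma~\ref{lemma-single}). The calibrated weights $4$ and $3$ are precisely what prevents the "break even" you worry about; without this lemma your accounting indeed never improves.

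Second, the endgame is not "a global discharging over a well-chosen family of sets $B$." The paper uses Corollary~\ref{cor-add} only to extract local structure of a minimal counterexample: $3$-edge-connectivity with the only $3$-cuts being neighborhoods of degree-$3$ vertices and no two such vertices adjacent (Corollary~\ref{cor-connect}); a degree-$3$ vertex with zero boundary lies in at most one triangle (Lemma~\ref{lemma-nottri}); no induced exceptional subgraph on $\ge 3$ vertices, i.e.\ no $K_4$ (Lemma~\ref{lemma-noexc}); and a splitting-off argument, combined with a structural lemma about degree-$3$ vertices of dual $4$-Ore graphs (Lemma~\ref{lemma-tri3}), showing that two adjacent neighbors of a vertex of degree $\ge 4$ have no second common neighbor (Lemma~\ref{lemma-splitoff}). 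This yields that every triangle shares an edge with at most one other triangle (Corollary~\ref{cor-notria}), and the contradiction is then a straight Euler-formula face count in a minimum-genus embedding: each $3$-face borders at least two faces of length $\ge 4$, so $2|E(G)|\ge\tfrac{10}{3}f$, whence $\pi(G)\ge 10$. None of this structural phase or the face-counting finish appears in your proposal. Your treatment of the exceptional branches (Lemma~\ref{lemma-single}, Corollary~\ref{cor-zerobo}, falling back on Corollary~\ref{cor-koyan} when $\beta\equiv 0$ and $G$ turns out planar) is in the right spirit and does match how those cases are discharged inside the paper's lemmas, but by itself it does not substitute for the missing weighted contraction inequality and the missing triangle-sparsity/Euler argument.
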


The rest of this section is devoted to the proof of Theorem \ref{thm-main3}.
A $\zz$-bordered graph is a \emph{minimal counterexample} if it is flow-critical, not sparse, and every $\zz$-bordered flow-critical graph with smaller number of vertices is sparse.
We will prove Theorem~\ref{thm-main3} by showing that there are no minimal counterexamples.
Note that by Observation~\ref{obs-basic}, if $(G,\beta)$ is a minimal counterexample, then $G$ is a $2$-connected simple graph.
Corollary~\ref{cor-koyan} can be restated as follows.
\begin{observation}\label{obs-koyan}
If $G$ is a $\zz$-flow-critical planar graph, then $G$ is sparse.
Hence, every minimal counterexample with zero boundary function is non-planar.
\end{observation}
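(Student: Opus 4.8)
The plan is to treat this as a direct reformulation of Corollary~\ref{cor-koyan}. First I would unwind the definition: since $G$ is planar, $g(G)=0$, so $\pi(G)=5|V(G)|-2|E(G)|$, and recall that $G$ is \emph{sparse} exactly when $G$ is exceptional or $\pi(G)\ge 9$. Hence the only thing to verify is that a \emph{non}-exceptional $\zz$-flow-critical planar graph $G$ satisfies $\pi(G)\ge 9$, equivalently $2|E(G)|\le 5|V(G)|-9$ --- which is precisely the second inequality of Corollary~\ref{cor-koyan}.

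So the argument splits into two cases. If $G$ is exceptional, it is sparse by definition and there is nothing to prove (here $\pi(G)=8$, which is exactly why the notion of \emph{sparse} was set up to absorb these graphs). If $G$ is not exceptional, Corollary~\ref{cor-koyan} gives $|E(G)|\le (5|V(G)|-9)/2$, hence $\pi(G)=5|V(G)|-2|E(G)|\ge 5|V(G)|-(5|V(G)|-9)=9$, so $G$ is again sparse. Either way $G$ is sparse, which proves the first sentence.

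For the concluding sentence I would argue by contradiction: if $\g=(G,\beta)$ is a minimal counterexample with $\beta$ the zero function, then $\g=(G,0)$ being flow-critical says exactly that $G$ is $\zz$-flow-critical, and $\g$ failing to be sparse says $G$ is not sparse. Were $G$ planar, the first half of the observation would force $G$ to be sparse, a contradiction; hence $G$ is non-planar.

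I do not expect a genuine obstacle here. All of the difficulty is already absorbed into Corollary~\ref{cor-koyan}, hence ultimately into the Kostochka and Yancey density bound (Theorem~\ref{thm-koyan}) combined with Tutte's duality between nowhere-zero $\zz$-flows and $3$-colorings (Theorem~\ref{thm-tutteflow}), all of which we may assume. The only point needing mild care is the two-regime bookkeeping between exceptional graphs ($\pi=8$) and everything else ($\pi\ge 9$); the tight planar examples in the exceptional regime are exactly what makes the $5/2$ coefficient in Theorem~\ref{thm-main2} unimprovable. The substantive work then lies in the sequel: ruling out minimal counterexamples outright, where this observation has at least reduced the zero-boundary situation to the non-planar case.
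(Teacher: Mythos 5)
Your proposal is correct and matches the paper, which gives no separate proof but simply presents this observation as a restatement of Corollary~\ref{cor-koyan} via the definitions of $\pi$ and sparseness (with $g(G)=0$ for planar $G$, the exceptional case absorbed by definition, and the non-exceptional bound $|E(G)|\le(5|V(G)|-9)/2$ yielding $\pi(G)\ge 9$). The second sentence follows exactly as you argue, since a minimal counterexample with zero boundary is a non-sparse $\zz$-flow-critical graph.
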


Moreover, small graphs are not counterexamples.

\begin{observation}\label{obs-size}
If $\g=(G,\beta)$ is a minimal counterexample, then $|V(G)|\ge 5$.
\end{observation}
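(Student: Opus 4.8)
The plan is to rule out small minimal counterexamples by a direct argument combining the planar density bound (Corollary~\ref{cor-koyan}, restated as Observation~\ref{obs-koyan}) with the fact that a graph on few vertices has small Euler genus. Concretely, suppose $\g=(G,\beta)$ is a minimal counterexample with $|V(G)|=n$. By Observation~\ref{obs-basic}, $G$ is $2$-connected and simple, so $|E(G)|\le\binom{n}{2}$. Since $G$ is not sparse, $\pi(G)=5n-2|E(G)|+5g(G)\le 8$, equivalently $|E(G)|\ge \tfrac{5n+5g(G)-8}{2}$. For $n\le 4$ this already forces $|E(G)|$ to exceed $\binom{n}{2}$ unless $g(G)$ is tiny, and then one checks by hand that the few remaining graphs (subgraphs of $K_4$, which are planar) are sparse by Observation~\ref{obs-koyan}, a contradiction. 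So the work is to make this bookkeeping precise for $n\in\{1,2,3,4\}$.

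First I would dispatch $n=1$: the only graph is a single vertex with no edges, which has no flow-critical bordered structure worth considering (and $\pi=5>8$ fails, i.e.\ it is sparse). For $n=2$: $G$ must be $2$-connected or $K_2$, so $G=K_2$ with a single edge; but $K_2$ is exceptional and hence sparse by definition, contradiction. For $n=3$: a simple $2$-connected graph on $3$ vertices is $K_3$ (a triangle), which is planar, so it is sparse by Observation~\ref{obs-koyan}; alternatively $\pi(K_3)=15-6+0=9\ge 9$. For $n=4$: a simple $2$-connected graph on $4$ vertices has at most $6$ edges, and $K_4$ with $6$ edges is planar; any such graph is planar, hence sparse by Observation~\ref{obs-koyan}. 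In each case we contradict the assumption that $G$ is not sparse, so no minimal counterexample with $|V(G)|\le 4$ exists, i.e.\ $|V(G)|\ge 5$.

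The only mild subtlety is that for $n=4$ the bound $|E(G)|\ge\tfrac{5n+5g(G)-8}{2}=\tfrac{12+5g(G)}{2}$ combined with $|E(G)|\le 6$ forces $g(G)=0$, i.e.\ planarity, and then Observation~\ref{obs-koyan} applies directly to the zero-boundary case; for nonzero boundary one notes that flow-criticality of $(G,\beta)$ with $G$ planar is exactly as restrictive as the zero-boundary case for the purposes of the inequality, since the density bound from Corollary~\ref{cor-koyan} is a statement about $G$ alone. I expect the main (very minor) obstacle to be organizing the case analysis so that the small graphs with nonzero boundary are covered; this is handled by observing that being sparse is a property of $G$, not of $\beta$, and that every simple $2$-connected graph on at most $4$ vertices is planar, so Observation~\ref{obs-koyan}'s density bound forces sparseness regardless of $\beta$.
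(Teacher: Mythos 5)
The skeleton of your argument (rule out $n\le 4$ by edge counts and small-case checks) is the same as the paper's, but the step you use to close the cases $n=3$ and especially $n=4$ does not work. Observation~\ref{obs-koyan} (equivalently Corollary~\ref{cor-koyan}) is a statement about $\zz$-flow-critical graphs, i.e.\ about pairs $(G,0)$; a minimal counterexample is a bordered graph $(G,\beta)$ in which $\beta$ may be nonzero, and then $G$ itself need not be $\zz$-flow-critical, so the hypothesis of Corollary~\ref{cor-koyan} is simply not met (for $n=3$ it fails even more blatantly: $K_3$ has a nowhere-zero $\zz$-flow, so it is not $\zz$-flow-critical and Observation~\ref{obs-koyan} says nothing about it). Your patch --- that ``flow-criticality of $(G,\beta)$ with $G$ planar is exactly as restrictive as the zero-boundary case, since the density bound is a statement about $G$ alone'' --- is not an argument: the \emph{conclusion} of the corollary concerns $G$ alone, but its \emph{hypothesis} concerns the zero-boundary pair, and the assertion that the same density bound holds for bordered flow-critical planar graphs is precisely the planar case of Theorem~\ref{thm-main3}, the statement whose proof Observation~\ref{obs-size} is a step of; invoking it here would be circular.

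The repair is easy and is exactly what the paper does: no planarity, genus, or Kostochka--Yancey input is needed for $n\le 4$. By Observation~\ref{obs-basic}, $G$ is simple and $2$-connected or $K_2$, and flow-criticality gives $n\ge 2$. For $n=2$, $G=K_2$ is exceptional, hence sparse (you have this). For $n=3$, $|E(G)|\le 3$, so $\pi(G)\ge 15-6=9$; this is your ``alternative'' computation and should be the argument, not the fallback. For $n=4$, split into two cases: either $G=K_4$, which is exceptional and hence sparse --- note $\pi(K_4)=8$, so no inequality $\pi\ge 9$ can cover $K_4$, and sparseness here comes only from exceptionality, a point your ``subgraphs of $K_4$ are sparse by Observation~\ref{obs-koyan}'' glosses over --- or $G\neq K_4$ is simple on four vertices, hence $|E(G)|\le 5$ and $\pi(G)\ge 20-10=10$. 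In every case $G$ is sparse, contradicting the definition of a minimal counterexample.
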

\begin{proof}
Since $\g$ is flow-critical, we have $|V(G)|\ge 2$.
If $|V(G)|=2$, then $G=K_2$ is exceptional.
If $|V(G)|=3$, then $\pi(G)\ge 5\cdot 3 - 2\cdot 3=9$, and $G$ is sparse.
If $|V(G)|=4$, then either $G=K_4$ is exceptional, or $|E(G)|\le 5$ and $\pi(G)\ge 5\cdot 4 - 2\cdot 5=10$.
\end{proof}

For a $G$-connected partition $\PP$ of the vertex set of a graph $G$,
let $k(G,\PP)$ be the number of parts of $\PP$ inducing an exceptional subgraph of $G$,
let $n(G,\PP)$ be the number of parts of size greater than one that do not induce an exceptional subgraph, and let
$$w(G,\PP)=4n(G,\PP)+3k(G,\PP).$$
In case the graph $G$ is clear from the context, we use $k(\PP)$, $n(\PP)$, and $w(\PP)$ for brevity.
Let us now state the crucial lemma forming the basis for the application of the potential method.

\begin{lemma}\label{lemma-add}
If $\g=(G,\beta)$ is a minimal counterexample, then for every $G$-connected partition $\PP$ of $V(G)$ with at least two parts, we have
$$\pi(G/\PP)\le \pi(G)-w(G,\PP).$$
\end{lemma}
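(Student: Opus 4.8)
The plan is to isolate the purely combinatorial content of the inequality from the single place where flows enter, and then to control the induced subgraphs $G[P_i]$ by a nested induction that exploits the minimality of the counterexample.

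\textbf{A potential identity for arbitrary contractions.} I would first record that for \emph{every} $G$-connected partition $\QQ=\{Q_1,\dots,Q_s\}$ of $V(G)$ one has
$$\pi(G/\QQ)\le\pi(G)-\sum_{j=1}^{s}\bigl(\pi(G[Q_j])-5\bigr).$$
No flows are needed: $|V(G/\QQ)|=s$, and $|E(G/\QQ)|=|E(G)|-\sum_j|E(G[Q_j])|$ since contraction deletes exactly the edges inside the parts (they become loops) and keeps every edge between distinct parts, while $g(G/\QQ)\le g(G)-\sum_j g(G[Q_j])$ follows by applying Lemma~\ref{lemma-conncon} once for each part in turn (the parts remain vertex-disjoint, so each still induces the same connected subgraph after the earlier contractions have been carried out). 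Substituting $|V(G)|=\sum_j|Q_j|$ into the definition of $\pi$ and collecting terms gives the displayed bound. A singleton part contributes $\pi(G[\{v\}])-5=0$, and an exceptional part contributes exactly $\pi(G[Q_j])-5=3$ (as $\pi=8$ on exceptional graphs); hence the whole lemma follows once we know that every non-exceptional part $P_i$ of $\PP$ with $|P_i|\ge2$ satisfies $\pi(G[P_i])\ge9$, i.e.\ that $G[P_i]$ is sparse.

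\textbf{Induced subgraphs of the minimal counterexample are sparse.} This is the heart of the argument and the main obstacle: there is no general density reason for it (dense graphs such as $K_6$ and $K_7$ have $\pi<8$), so minimality of $\g$ must be used. I would prove by induction on $|B|$ that for every $B\subseteq V(G)$ with $2\le|B|\le|V(G)|-1$ and $G[B]$ connected, $G[B]$ is sparse. The base case $|B|=2$ is immediate, since $G$ is simple and so $G[B]=K_2$, which is exceptional. For the inductive step, since $\g$ is flow-critical and $|B|\ge2$, the proper contraction $\g/B$ has a nowhere-zero flow $f$, so by Observation~\ref{obs-restr} the $\zz$-bordered graph $(G[B],\beta_f)$ has no nowhere-zero flow. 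Among all contractions of $(G[B],\beta_f)$ along $G[B]$-connected partitions that still have no nowhere-zero flow, pick one, $\mathcal{H}=(G[B]/\PP',\beta_f/\PP')$, with the fewest vertices; then $\mathcal{H}$ is connected, has no nowhere-zero flow, and every proper contraction of $\mathcal{H}$ is again such a contraction of $(G[B],\beta_f)$ (merging parts of $\PP'$ keeps the partition $G[B]$-connected) and hence has a nowhere-zero flow, so $\mathcal{H}$ is flow-critical. Consequently $|V(\mathcal{H})|\ge2$ and $|V(\mathcal{H})|\le|B|<|V(G)|$, so by minimality of $\g$ the graph $G[B]/\PP'$ is sparse, i.e.\ $\pi(G[B]/\PP')\ge8$. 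If $\PP'$ is the all-singleton partition then $G[B]=G[B]/\PP'$ is sparse and we are done. Otherwise $\PP'$ has a part of size $\ge2$, and since $\mathcal{H}$ is flow-critical $\PP'$ has at least two parts, so every part of $\PP'$ is a proper connected induced subgraph of $G$ on fewer than $|B|$ vertices; applying the identity above to $G[B]$ and $\PP'$ and the inductive hypothesis on the parts of size $\ge2$ (each then has $\pi\ge8$, and there is at least one) yields
$$\pi(G[B])\ge\pi(G[B]/\PP')+\sum_{j}\bigl(\pi(G[P'_j])-5\bigr)\ge 8+3=11\ge9,$$
so $G[B]$ is sparse (indeed non-exceptional).

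\textbf{Assembling the lemma.} Since $\PP$ has at least two parts, each part is a proper subset of $V(G)$, so the previous step applies to every part of size at least two. Feeding this into the identity and splitting the sum, singleton parts contribute $0$, the $k(G,\PP)$ exceptional parts of size $\ge2$ contribute exactly $3$ each, and the $n(G,\PP)$ non-exceptional parts of size $\ge2$ are sparse and non-exceptional, hence have $\pi\ge9$ and contribute at least $4$ each. Therefore $\sum_i(\pi(G[P_i])-5)\ge4n(G,\PP)+3k(G,\PP)=w(G,\PP)$, which is exactly the desired inequality $\pi(G/\PP)\le\pi(G)-w(G,\PP)$. I expect the only delicate point to be the second step, and within it the need to chain two different induction mechanisms (minimality of $\g$ for $\mathcal{H}$, and induction on $|B|$ for the subpieces $G[P'_j]$) while verifying that $\mathcal{H}$ is genuinely flow-critical and that its parts are proper, connected, and small enough to be eligible for both.
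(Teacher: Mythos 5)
Your proof is correct, and it is organized differently from the paper's. You split the argument into (i) a purely combinatorial superadditivity bound $\pi(G/\QQ)\le\pi(G)-\sum_j(\pi(G[Q_j])-5)$, obtained by iterating Lemma~\ref{lemma-conncon} over the parts, and (ii) the stronger structural claim, proved by induction on $|B|$, that \emph{every} proper connected induced subgraph $G[B]$ of a minimal counterexample is sparse; the lemma then follows by bookkeeping ($0$, $3$, or at least $4$ per part). The paper never proves, nor needs, that $G[B]$ itself is sparse: it runs a reverse induction on the number of parts of $\PP$, replaces the part $B$ by the parts of a flow-critical contraction $\g_B/\QQ$ of $(G[B],\beta_f)$, applies the induction hypothesis to the refined partition, and does a case analysis on whether $G[B]$ and $G[B]/\QQ$ are exceptional, using only that the flow-critical quotient $G[B]/\QQ$ has $\pi\ge 8$ (or $\ge 9$ when non-exceptional) by minimality. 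Both routes rest on the same ingredients (Observation~\ref{obs-restr}, existence of a flow-critical contraction of a bordered graph with no nowhere-zero flow, minimality of the counterexample, and Lemma~\ref{lemma-conncon}); your version buys a cleaner separation of the flow-theoretic input from the potential arithmetic and yields a stronger intermediate statement (proper connected induced subgraphs have $\pi\ge 8$, indeed $\pi\ge 11$ when the relevant contraction is nontrivial), at the cost of a nested induction (minimality of $\g$ for the quotient, induction on $|B|$ for the parts of $\PP'$), whose delicate points — that the minimum-vertex contraction $\mathcal{H}$ is genuinely flow-critical and that the parts of $\PP'$ are proper, connected, and strictly smaller — you verify correctly.
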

\begin{proof}
We prove the claim by reverse induction on $|\PP|$.  If $|\PP|=|V(G)|$, i.e., $\PP$ is trivial, then $w(\PP)=0$
and the claim clearly holds.  Hence, suppose $|\PP|<|V(G)|$ and that the claim holds for every $G$-connected partition with more parts.

Hence, we can assume that $\PP$ contains a part $B$ of size at least two.
Let $f$ be a nowhere-zero flow in $\g/B$ and let $\beta_f$ be as in Observation~\ref{obs-restr},
so that $\g_B=(G[B],\beta_f)$ does not have a nowhere-zero flow.
Consequently, there exists a flow-critical contraction $\g_B/\QQ$
of $\g_B$. Let $\PP'=(\PP\setminus\{B\})\cup \QQ$.  Note that $|\QQ|\ge 2$, and thus $|\PP'|>|\PP|$.
Let us make the following three observations:
\begin{align}
  |V(G/\PP')| &= |V(G/\PP)| + |V(G[B]/\QQ)| - 1\label{eq:L3.4_1}\\
  |E(G/\PP')| &= |E(G/\PP)| + |E(G[B]/\QQ)|\label{eq:L3.4_2}\\
  \pi(G/\PP') &= \pi(G/\PP) + \pi(G[B]/\QQ) - 5\nonumber\\
  &\hphantom{=}- 5(g(G/\PP)+g(G[B]/\QQ)-g(G/\PP')).\label{eq:L3.4_2p}
\end{align}
From these observations\footnote{In the rest of the paper we shall frequently use analogues of (\ref{eq:L3.4_1})--(\ref{eq:L3.4_2p}) without going through the details.},
using Lemma \ref{lemma-conncon} and the induction hypothesis for $\PP'$, we obtain the following:
\begin{align}
\pi(G/\PP)&=\pi(G/\PP')-\pi(G[B]/\QQ)+5+5(g(G/\PP)+g(G[B]/\QQ)-g(G/\PP')) \nonumber\\
&\le \pi(G/\PP')-\pi(G[B]/\QQ)+5 \nonumber\\
&\le \pi(G)-w(\PP')-\pi(G[B]/\QQ)+5. \label{eq:L3.4_3}
\end{align}

If $G[B]$ is exceptional, then $G[B]$ is $\zz$-flow-critical by Observation~\ref{obs-orecrit}
and $\beta_f$ is the zero function by Lemma~\ref{lemma-single}, and thus $\QQ$ is a trivial partition;
this implies that $n(\PP')=n(\PP)$, $k(\PP')=k(\PP)-1$, and $\pi(G[B]/\QQ)=\pi(G[B])=8$.  It follows that
$$\pi(G/\PP)\le \pi(G)-w(\PP)+3-8+5=\pi(G)-w(\PP).$$
If $G[B]$ is not exceptional, but $G[B]/\QQ$ is, then $\QQ$ cannot be a trivial partition,
and thus $n(\PP')\ge n(\PP)-1$, $k(\PP')\ge k(\PP)$, and $n(\PP')+k(\PP')\ge n(\PP)+k(\PP)$.
This implies that $w(\PP') \ge w(\PP) - 1$.  Since $G[B]/\QQ$ is exceptional, we have $\pi(G[B]/\QQ)=8$, and using (\ref{eq:L3.4_3}), we obtain:
\begin{align*}
  \pi(G/\PP) &\le \pi(G) - w(\PP') - \pi(G[B]/Q) + 5 \\
             &\le \pi(G)-w(\PP)+1-8+5<\pi(G)-w(\PP).
\end{align*}

Finally, if neither $G[B]$ nor $G[B]/\QQ$ is exceptional, then $n(\PP')\ge n(\PP)-1$ and $k(\PP')\ge k(\PP)$.
Moreover, since $\PP$ has at least two parts, $G[B]$ (and thus also $G[B]/\QQ$) has fewer vertices than $G$,
and since $\g$ is a minimal counterexample, $G[B]/\QQ$ is sparse, i.e., $\pi(G[B]/\QQ)\ge 9$.
Therefore,
$$\pi(G/\PP)\le \pi(G)-w(\PP)+4-9+5=\pi(G)-w(\PP).$$
This completes the proof.
\end{proof}

Since a minimal counterexample $\g=(G,\beta)$ satisfies $\pi(G)<9$, we have the
following corollary.

\begin{corollary}\label{cor-add}
If $\g=(G,\beta)$ is a minimal counterexample, then for every $G$-connected partition $\PP$ of $V(G)$ with at least two parts, we have
$$\pi(G/\PP)<9-w(G,\PP).$$
\end{corollary}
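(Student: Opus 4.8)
The plan is to derive Corollary~\ref{cor-add} directly from Lemma~\ref{lemma-add} by unwinding the definition of a minimal counterexample. First I would recall that a minimal counterexample $\g=(G,\beta)$ is, by definition, a flow-critical $\zz$-bordered graph that is \emph{not} sparse. Since a graph is declared sparse precisely when it is exceptional or satisfies $\pi(G)\ge 9$, the failure of sparsity means that $G$ is not exceptional and that $\pi(G)\le 8$; in particular $\pi(G)<9$. (Here $\pi(G)=5|V(G)|-2|E(G)|+5g(G)$ is an integer, so $\pi(G)<9$ is equivalent to $\pi(G)\le 8$, but only the strict inequality $\pi(G)<9$ will be used.)

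Next I would invoke Lemma~\ref{lemma-add}, whose hypotheses match those of the corollary verbatim: for every $G$-connected partition $\PP$ of $V(G)$ with at least two parts, $\pi(G/\PP)\le \pi(G)-w(G,\PP)$. Combining this inequality with $\pi(G)<9$ yields $\pi(G/\PP)\le \pi(G)-w(G,\PP)<9-w(G,\PP)$, which is exactly the asserted bound.

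There is no substantive obstacle here: all of the work is carried out in Lemma~\ref{lemma-add}, and the corollary merely records the observation that a minimal counterexample has $\pi$-value strictly below the sparsity threshold $9$, so that subtracting the weight $w(G,\PP)$ from both sides keeps us strictly below $9-w(G,\PP)$. The only point requiring a moment's care is to confirm that the hypotheses (that $\g$ is a minimal counterexample and that $\PP$ has at least two parts) line up precisely with those of Lemma~\ref{lemma-add}, which they do.
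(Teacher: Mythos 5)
Your proposal is correct and coincides with the paper's own (one-line) derivation: a minimal counterexample is not sparse and not exceptional, so $\pi(G)<9$, and combining this with Lemma~\ref{lemma-add} gives $\pi(G/\PP)\le \pi(G)-w(G,\PP)<9-w(G,\PP)$. Nothing further is needed.
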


As an application, we can restrict edge-connectivity of counterexamples.

\begin{corollary}\label{cor-connect}
If $\g=(G,\beta)$ is a minimal counterexample, then $G$ is $3$-edge-connected.
Moreover, the only 3-edge-cuts in $G$ are the neighborhoods of vertices of degree three,
and no two vertices of degree three are adjacent.
\end{corollary}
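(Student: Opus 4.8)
The plan is to derive all three conclusions from Corollary~\ref{cor-add} by feeding it suitable partitions of $V(G)$ into two parts. Recall that a minimal counterexample $\g=(G,\beta)$ has $G$ a $2$-connected simple graph on at least five vertices (Observations~\ref{obs-basic} and~\ref{obs-size}), so $G$ is $2$-edge-connected of minimum degree at least $2$. The basic computation I will use repeatedly is: if $A,B$ partition $V(G)$ with exactly $j$ edges of $G$ between them, then for $\PP=\{A,B\}$ the graph $G/\PP$ consists of two vertices joined by $j$ parallel edges, so it is planar and $\pi(G/\PP)=10-2j$; moreover a part of $\PP$ of size greater than one contributes at least $3$ to $w(G,\PP)$.

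First I would rule out small edge-cuts. A $2$-edge-cut of a $2$-edge-connected graph, and a $3$-edge-cut of a $3$-edge-connected graph, separates $V(G)$ into two connected sides $A$ and $B$. If $|A|,|B|\ge2$, take $\PP=\{A,B\}$: a $2$-cut gives $\pi(G/\PP)=6$ and $w(G,\PP)\ge6$, and a $3$-cut gives $\pi(G/\PP)=4$ and $w(G,\PP)\ge6$, both contradicting $\pi(G/\PP)<9-w(G,\PP)$. A $2$-edge-cut with a side of size $1$ is just a vertex $a$ of degree $2$; since $G$ is $2$-connected, $G-a$ is connected, so $\PP=\{\{a\},V(G)\setminus\{a\}\}$ is $G$-connected with $\pi(G/\PP)=6$ and $w(G,\PP)\ge3$, again a contradiction. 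Hence $G$ is $3$-edge-connected and every $3$-edge-cut of $G$ is the set of edges incident with one vertex of degree three.

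For the last assertion, assume $u$ and $v$ are adjacent vertices of degree three. Only four edges leave $\{u,v\}$, so $G-\{u,v\}$ must be connected (two components would each send at least three edges to $\{u,v\}$ by $3$-edge-connectivity); thus $\PP=\{\{u,v\},V(G)\setminus\{u,v\}\}$ is $G$-connected and $G/\PP$ is two vertices joined by four parallel edges, so $\pi(G/\PP)=2$. As $G[\{u,v\}]=K_2$ is exceptional, $w(G,\PP)=7$ unless $G':=G[V(G)\setminus\{u,v\}]$ is exceptional, in which case $w(G,\PP)=6$. If $G'$ is not exceptional then Corollary~\ref{cor-add} is contradicted outright, so everything reduces to ruling out that $G'$ is exceptional.

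So suppose $G'$ is exceptional. Since $\g/(V(G)\setminus\{u,v\})$ is a proper contraction of $\g$, it has a nowhere-zero flow; by the argument proving Corollary~\ref{cor-zerobo} (applying Observation~\ref{obs-restr} and Lemma~\ref{lemma-single} to both that flow and its negative) one gets that $\beta$ vanishes on $V(G)\setminus\{u,v\}$, hence $\beta(u)+\beta(v)=0$. I claim $G$ has no nowhere-zero $\zz$-flow. If $\beta=0$ this is immediate since $(G,\beta)=(G,0)$ has none. If $\beta(u)=-\beta(v)\ne0$, then because $uv\in E(G)$ the flows of $(G,\beta)$ are in bijection with the $\zz$-flows of $G$ (change only the value on $uv$, by $\beta(u)$); a nowhere-zero $\zz$-flow of $G$, possibly after replacing it by its negative so that its value on $uv$ becomes admissible, would thus yield a nowhere-zero flow of $(G,\beta)$, which is impossible. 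Therefore $(G,0)$ has no nowhere-zero flow and so has a flow-critical contraction $(G/\QQ,0)$. If $\QQ$ is trivial, then $G$ is $\zz$-flow-critical, hence non-planar (otherwise $G$ is sparse by Observation~\ref{obs-koyan}, contradicting that $\g$ is a counterexample), so $g(G)\ge1$; but $G'$ exceptional forces $|E(G)|=|E(G')|+5=(5|V(G)|-8)/2$, whence $\pi(G)=8+5g(G)\ge13$ and $G$ is sparse, a contradiction. If $\QQ$ is non-trivial, then $G/\QQ$ is $\zz$-flow-critical with fewer vertices than $G$, so by minimality it is sparse and $\pi(G/\QQ)\ge8$, contradicting $\pi(G/\QQ)<9-w(G,\QQ)\le6$ from Corollary~\ref{cor-add}. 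No step here is genuinely hard once Corollary~\ref{cor-add} is available; the one point needing care is this last case where $G'$ is exceptional, and within it the elementary $\mathbb{Z}_3$-arithmetic showing that a nowhere-zero $\zz$-flow of $G$ produces one of $(G,\beta)$, together with the routine check that each contracted graph in the cut arguments really does reduce to two vertices with parallel edges (so that its Euler genus is $0$).
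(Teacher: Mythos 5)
Your first two steps are fine and essentially match the paper: ruling out $2$-edge-cuts and $3$-edge-cuts with both sides of size at least two via Corollary~\ref{cor-add}, and reducing the adjacent-degree-three case to the situation where $G':=G-\{u,v\}$ induces an exceptional graph. The gap is the sentence claiming that $\beta$ vanishes on $V(G)\setminus\{u,v\}$ ``by the argument proving Corollary~\ref{cor-zerobo}''. That argument cannot be run here: it needs the hypothesis that $\beta$ is zero \emph{outside} the exceptional set $B$, because only then is the negative $-f$ of a nowhere-zero flow of $\g/B$ again a flow of $\g/B$ (in $\zz$, requiring $-\beta/B=\beta/B$ forces $\beta/B\equiv 0$). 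With $B=V(G)\setminus\{u,v\}$ the uncontracted vertices are exactly $u$ and $v$, whose boundary values are precisely what you do not know, so the negation trick is unavailable. Moreover, what a single nowhere-zero flow $f$ of $\g/B$ together with Observation~\ref{obs-restr} and Lemma~\ref{lemma-single} actually yields is $\beta(x)=\sum f(e_y)$ summed over the edges $e=xy$ with $y\in\{u,v\}$, for every $x\in B$; since these flow values are non-zero, $\beta$ is in general \emph{not} zero at the neighbours of $u$ and $v$ inside $B$. So the claim is unjustified (and typically false), and everything built on it --- the identity $\beta(u)+\beta(v)=0$, the transfer of nowhere-zero flows between $(G,0)$ and $(G,\beta)$ by adjusting the value on the edge $uv$, and the final contradiction --- only survives in the special case $\beta(u)=\beta(v)=0$.

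That missing case ($\beta$ not identically zero on $\{u,v\}$) is exactly where the paper has to do real work. Assuming $\beta(u)=1$, it exhibits two nowhere-zero flows $f_1,f_2$ of $\g/B$ that agree on the edge $uv$ and on the two edges from $v$ to $B$, but swap the values $1$ and $-1$ on the two edges joining $u$ to $B$; since $G$ is simple these two edges end at distinct vertices of $B$, so by Observation~\ref{obs-restr} they induce two \emph{distinct} boundaries $\beta_{f_1}\neq\beta_{f_2}$ on the exceptional graph $G[B]$, neither of which admits a nowhere-zero flow, whereas Lemma~\ref{lemma-single} forces any such boundary to be zero --- a contradiction. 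You need this argument (or some other way of handling non-zero values of $\beta$ at $u$ or $v$) to close the proof. The subcase $\beta(u)=\beta(v)=0$, which your concluding computation does treat correctly (Corollary~\ref{cor-zerobo} gives $\beta\equiv 0$, then $\pi(G)=8+5g(G)$ together with Observation~\ref{obs-koyan} gives the contradiction), is the easy half and is also how the paper disposes of it.
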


\begin{proof}
Consider any minimal edge-cut $S$ in $G$, and let $X$ and $Y$ be the sides of $S$
with $|X|\le |Y|$.  Let $\PP=\{X,Y\}$.  Since $S$ is minimal, this partition is $G$-connected.
By Corollary~\ref{cor-add},
$$10-2|S|=\pi(G/\PP)<9-w(\PP),$$
and thus 
\begin{equation}
  |S|>\frac{w(\PP)+1}{2}. \label{eq:C36}
\end{equation}
By Observation~\ref{obs-size}, $G$ has at least five vertices, and thus $|Y|\ge \lceil |V(G)|/2\rceil\ge 3$.
Therefore, $n(\PP)+k(\PP)\ge 1$ and $w(\PP)\ge 3$, implying that $|S|\ge 3$.
Moreover, if $|X|\ge 2$, then $n(\PP)+k(\PP)\ge 2$ and $w(\PP)\ge 6$,
which implies $|S|\ge 4$.  Therefore, $G$ is $3$-edge-connected and the only
3-edge-cuts in $G$ are the neighborhoods of vertices of degree three.

Suppose now that $X$ consists of two adjacent vertices $x_1$ and $x_2$ of degree three.  We have $|S|=4$, and by (\ref{eq:C36}), we conclude that
$w(\PP)=6$, and thus $G[Y]$ is exceptional.  If $\beta(x_1)=\beta(x_2)=0$, then by Corollary~\ref{cor-zerobo}, $\beta$ is the zero function.
Moreover, we have
$$\pi(G)=\pi(G[X])+\pi(G[Y])-2|S|+5g(G)=8+5g(G).$$
Since $\pi(G)<9$, it follows that $g(G)=0$, i.e., $G$ is planar. However, this contradicts Observation~\ref{obs-koyan}.

Therefore, we can assume $\beta(x_1)=1$.  Let $e=x_1x_2$, let $e^1$ and $e^2$ be the edges incident with $x_1$
distinct from $e$, and let $h^1$ and $h^2$ be the edges incident with $x_2$ distinct from $e$.
Note that $\g/Y$ has nowhere-zero flows $f_1$ and $f_2$ such that
$f_1(e_{x_2})=f_2(e_{x_2})=1$, $f_1(h^j_{x_2})=f_2(h^j_{x_2})$ for $j\in \{1,2\}$,
$f_1(e^1_{x_1})=1$, $f_2(e^1_{x_1})=-1$, $f_1(e^2_{x_1})=-1$ and $f_2(e^2_{x_1})=1$.
Let $\beta_{f_1}$ and $\beta_{f_2}$ be as in Observation~\ref{obs-restr}, so that
neither $(G[Y],\beta_{f_1})$ nor $(G[Y],\beta_{f_2})$ has a nowhere-zero flow.
Since $f_1$ and $f_2$ differ exactly on the edges $e^1$ and $e^2$ and $G$ does not have
parallel edges, the functions $\beta_{f_1}$ and $\beta_{f_2}$ are different, and thus
at least one of them is non-zero.  However, this contradicts Lemma~\ref{lemma-single}.
\end{proof}

Next, let us consider vertices of degree three.

\begin{lemma}\label{lemma-nottri}
Let $v$ be a vertex of degree three in a minimal counterexample $\g=(G,\beta)$.
If $\beta(v)=0$, then $v$ is contained in at most one triangle.
\end{lemma}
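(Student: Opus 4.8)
The plan is to argue by contradiction: suppose $v$ has degree three with $\beta(v)=0$ and lies in two distinct triangles. Since $v$ has exactly three neighbors $x_1,x_2,x_3$, two triangles through $v$ force at least two of the edges $x_1x_2$, $x_1x_3$, $x_2x_3$ to be present; by relabeling I may assume $x_1x_2,x_1x_3\in E(G)$ (the case where all three are present will be subsumed). Thus $\{v,x_1,x_2,x_3\}$ induces a graph containing $K_4$ minus possibly the edge $x_2x_3$. The natural move is to consider the $G$-connected partition $\PP$ obtained by contracting $B=\{v,x_1,x_2,x_3\}$ (or a suitable subset inducing an exceptional graph) and apply Corollary~\ref{cor-add} to extract a contradiction with $\pi(G)<9$. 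If $G[B]=K_4$, then $B$ induces an exceptional graph, $k(\PP)\ge 1$ and $w(\PP)\ge 3$; more importantly, Corollary~\ref{cor-zerobo} will be available once I know $\beta$ vanishes off $B$, but in general it need not. So the cleaner route is via the flow-restriction machinery of Observation~\ref{obs-restr} combined with Lemma~\ref{lemma-single}, mirroring the end of the proof of Corollary~\ref{cor-connect}.

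First I would set $B=\{v,x_1,x_2,x_3\}$, note $G[B]$ is connected and (being $K_4$ or $K_4$ minus an edge) is small, so $\g/B$ has a nowhere-zero flow $f$. By Observation~\ref{obs-restr}, $(G[B],\beta_f)$ has no nowhere-zero flow. Here is where the hypothesis $\beta(v)=0$ enters: since $v$ has all its edges inside $B$, the value $\beta_f(v)=\beta(v)=0$. Now I would exploit the freedom in choosing $f$: because $v$ has degree three inside $B$ and $K_4$ (or $K_4^-$) is $\zz$-flow-critical or close to it, there should be several distinct nowhere-zero flows $f$ in $\g/B$ that agree on all edges leaving $B$ but differ on edges inside $B$ incident to $v$, exactly as in the Corollary~\ref{cor-connect} argument. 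Each such $f$ gives a boundary $\beta_f$ on $G[B]$ admitting no nowhere-zero flow; if two of these boundaries differ, at least one is nonzero, and since $G[B]$ is exceptional ($K_4$) or otherwise has few vertices, I can hope to contradict Lemma~\ref{lemma-single} (in the $K_4$ case) or directly the $\zz$-flow-criticality structure (in the $K_4^-$ case, where $K_4^-$ is a 4-cycle with a chord and one checks its flow behavior by hand).

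The case analysis splits on whether $x_2x_3\in E(G)$. If $G[B]=K_4$: I produce two nowhere-zero flows of $\g/B$ agreeing outside $B$ but routing different nonzero values on the two edges $vx_2,vx_3$ while keeping $vx_1$ and the three outer edges fixed; since $G$ is simple the resulting $\beta_{f}$'s on the $K_4$ differ, one is nonzero, contradicting Lemma~\ref{lemma-single}. If $G[B]=K_4^-$ (say $x_2x_3\notin E$), then $v$ still has degree three in $B$ and I run the same flip; the only subtlety is checking that $(K_4^-,\beta)$ has a nowhere-zero $\zz$-flow for every boundary $\beta$ with $\beta(v)=0$ except the obstructing one — this is a finite check on a five-edge graph. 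I expect the main obstacle to be verifying that enough distinct nowhere-zero flows of $\g/B$ exist with the prescribed values on the edges leaving $B$: one must confirm that fixing those outer edge-values and $\beta_f$-constraints still leaves room to vary the two inner edges at $v$ over nonzero values, which amounts to a small linear-algebra-over-$\zz$ argument about flows in $K_4$ and $K_4^-$ with prescribed boundary and one prescribed edge — essentially the same computation underpinning $\zz$-flow-criticality of $K_4$.
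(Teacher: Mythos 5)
Your central mechanism does not get off the ground. With $B=N[v]$, a nowhere-zero flow $f$ of $\g/B$ assigns values only to the edges of $G-E(G[B])$; every edge incident with $v$ lies inside $G[B]$, so $f$ never sees those edges, and the boundary $\beta_f$ of Observation~\ref{obs-restr} depends only on $\beta$ and the values of $f$ on the edges leaving $B$. Hence ``flows of $\g/B$ agreeing on all edges leaving $B$ but differing on edges inside $B$ incident to $v$'' are not meaningful objects, and any two flows agreeing on the edges leaving $B$ give literally the same $\beta_f$. To obtain two different boundaries you would need two nowhere-zero flows of $\g/B$ that differ on the cut $\partial B$, and there is no tool that provides them: $\g/B$ is a large, uncontrolled graph, and flow-criticality of $\g$ only guarantees \emph{some} nowhere-zero flow in each proper contraction, not flows with prescribed or adjustable values on specified edges. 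You have the argument of Corollary~\ref{cor-connect} backwards: there the \emph{large} side $Y$ is contracted, the two flows $f_1,f_2$ are built explicitly by hand on the three-vertex graph $\g/Y$ (where one can freely flip values on $e^1,e^2$), and the hypothesis needed for Lemma~\ref{lemma-single} --- that $G[Y]$ is exceptional --- is first forced by the potential bound of Corollary~\ref{cor-add}, not assumed. In your setup, contracting the small side, Lemma~\ref{lemma-single} merely tells you that $\beta_f\equiv 0$ on the $K_4$, which is perfectly consistent and yields no contradiction.

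Even granting two distinct boundaries, the $K_4^-$ case would not close: Lemma~\ref{lemma-single} applies only to exceptional graphs, and $K_4$ minus an edge is not one; moreover it does admit nonzero boundaries with $\beta(v)=0$ and no nowhere-zero $\zz$-flow (assign $1$ and $-1$ to its two degree-two vertices and $0$ elsewhere), so ``one of the two boundaries is nonzero'' is not an obstruction there. Your sketch also never uses $\beta(v)=0$ in an essential way and never reaches the planarity/genus contradiction, both of which are the heart of the matter. The paper instead forms an auxiliary bordered graph $\g'$ from $\g/\{v_1,v,v_3\}$ by deleting two of the three parallel edges to $v_2$, uses $\beta(v)=0$ in an explicit case analysis to show $\g'$ has no nowhere-zero flow, pulls a flow-critical contraction $\g'/\PP'$ back to a $G$-connected partition of $V(G)$, and then combines Corollary~\ref{cor-add} with Lemma~\ref{lemma-single} and Corollary~\ref{cor-zerobo} to force $\beta\equiv 0$ and $\pi(G)=8+5g(G)$, i.e.\ planarity, contradicting Observation~\ref{obs-koyan}. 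None of these steps appears in your plan, so the gap is not a matter of finishing a finite check but of missing the main construction.
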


\begin{proof}
Let $v_1$, $v_2$, and $v_3$ be the neighbors of $v$, joined to $v$
by edges $e^1$, $e^2$, and $e^3$.  Suppose for a contradiction that $v_1v_2,v_2v_3\in E(G)$,
and let these edges be denoted by $e^{12}$ and $e^{32}$.
Let $\g'=(G',\beta')$ be the $\zz$-bordered graph obtained from $\g/\{v_1,v,v_3\}$ by
removing two edges of the resulting triple edge between $v_2$ and the vertex $z$ arising from the contraction.
Let $e'$ be the remaining edge between $v_2$ and $z$.

We claim that $\g'$ does not have a nowhere-zero flow.  Indeed, suppose for a contradiction that $f$ is a nowhere-zero flow in $\g'$.
We can view $f$ as assigning values to the edges of the symmetric orientation of $G_0=G-\{v_1v_2,v_2v_3,vv_1,vv_2,vv_3\}$.
For $i\in \{1,2,3\}$, let $\delta_i=\beta(v_i)-\sum_{e=v_iy\in E(G_0)} f(e_y)$.  Note that
$\delta_2=f(e'_z)\neq 0$, and by symmetry, we can assume that $\delta_2=1$.  Moreover, since $\beta(v)=0$, we have
$\delta_1+\delta_3=f(e'_{v_2})=-f(e'_z)=-\delta_2=-1$.  In particular, at most one of $\delta_1$ and $\delta_3$ is equal to $0$,
and by symmetry, we can assume $\delta_1\neq 0$.  We can extend $f$ to $G$ as follows.
\begin{itemize}
\item If $\delta_1=1$ (and thus $\delta_3=1$), then set
$f(e^i_{v_i})=f(e^{i2}_{v_i})=1$ for $i\in \{1,3\}$ and $f(e^2_v)=-1$.
\item If $\delta_1=-1$ (and thus $\delta_3=0$), then set
$f(e^1_{v_1})=f(e^{12}_{v_1})=-1$ and $f(e^2_{v})=f(e^{32}_{v_3})=f(e^3_v)=1$.
\end{itemize}
In either case, we obtain a nowhere-zero flow in $\g$, which is a contradiction.

Since $\g'$ does not have a nowhere-zero flow, there exists a $G'$-connected partition $\PP'$ of $V(G')$ such that $\g'/\PP'$
is flow-critical.  Note that $v_2$ and $z$ belong to different parts $A$ and $B$ of $G'/\PP'$,
as if they both belonged to the same part $C$, then we would have $G'/\PP'=G/\PP_0$
for the $G$-connected partition $\PP_0$ obtained by replacing $C$ by $(C\setminus \{z\})\cup\{v_1,v,v_3\}$,
contradicting the flow-criticality of $G$.  Let $B_0=(B\setminus \{z\})\cup\{v_1,v_3\}$.
If $G[B_0]$ is connected, then let $\PP=(\PP'\setminus \{B\})\cup \{\{v\},B_0\}$.
Otherwise, note that $G[B_0]$ has exactly two components $B_1$ and $B_3$,
containing $v_1$ and $v_3$, respectively, and let $\PP=(\PP'\setminus \{B\})\cup \{\{v\},B_1,B_3\}$.
Note that in either case, $\PP$ is a $G$-connected partition of $V(G)$,
and moreover, $G'/\PP'$ is a minor of $G/\PP$, and thus $g(G'/\PP')\le g(G/\PP)$.
Applying Corollary~\ref{cor-add}, in the former case we obtain
\begin{align*}
9-w(G,\PP)&>\pi(G/\PP)=\pi(G'/\PP')+5-4\cdot 2+5(g(G/\PP)-g(G'/\PP'))\\
&\ge \pi(G'/\PP')-3,
\end{align*}
and in the latter case,
\begin{align*}
9-w(G,\PP)&>\pi(G/\PP)=\pi(G'/\PP')+2\cdot 5-4\cdot 2+5(g(G/\PP)-g(G'/\PP'))\\
&\ge \pi(G'/\PP')+2.
\end{align*}
The latter is not possible, since $w(G,\PP)\ge 0$ and $\pi(G'/\PP')\ge 8$.
In the former case, note that $|B_0|\ge 2$, and thus $n(G,\PP)+k(G,\PP)\ge 1$.
Hence, the inequality can only hold if $n(G,\PP)=0$, $k(G,\PP)=1$,
and $\pi(G'/\PP')=8$; that is, $B_0$ is the only non-trivial part of $\PP$,
$G[B_0]$ is exceptional, and $G'/\PP'=G'/B$ is exceptional.  By Lemma~\ref{lemma-single},
the boundary function of $\g'/B$ is zero, and thus $\beta(x)=0$ for every
$x\in V(G')\setminus B=V(G)\setminus (B_0\cup \{v\})$.  Since $\beta(v)=0$ by the assumption
and since $G[B_0]$ is exceptional, Corollary~\ref{cor-zerobo} implies that $\beta$ is the zero function.

Since both $G[B_0]$ and $G'/B$ are exceptional, they are both planar.  Consequently,
\begin{align*}
  \pi(G) &= 5|V(G)|-2|E(G)|+5g(G)\\
  &= (5|V(G'/B)|-2|E(G'/B))-4\cdot 2+(5|B_0|-2|E(G[B_0])|)+5g(G)\\
  &= \pi(G'/B)-8+\pi(G[B_0])+5g(G) \\
  &= 8+5g(G).
\end{align*}
Since $\pi(G)<9$, this implies that $G$ is planar.  However, this contradicts Observation~\ref{obs-koyan}.
\end{proof}

The above result is useful in conjunction with the following observation.

\begin{lemma}\label{lemma-tri3}
If $G$ is a dual $4$-Ore graph and $G\neq K_4$, then there exists a set $X\subseteq V(G)$ of size four such that
\begin{itemize}
\item[\rm (i)] every vertex in $X$ has degree three in $G$ and is contained in at least two triangles,
\item[\rm (ii)] $G[X]$ has maximum degree at most one,
\item[\rm (iii)] if three vertices of $X$ form an independent set in $G$, then they do not have a common neighbor, and
\item[\rm (iv)] if two adjacent vertices of $X$ have a common neighbor, then they have two common neighbors.
\end{itemize}
\end{lemma}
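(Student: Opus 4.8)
The plan is to prove the lemma by induction on $|V(G)|$, exploiting the recursive structure of dual $4$-Ore graphs. By Observations~\ref{obs-planeore} and~\ref{obs-dual}, since $G$ is dual $4$-Ore and $G\neq K_4$, the graph $G$ is a gluing of two dual $4$-Ore graphs $G_1$ and $G_2$, both with strictly fewer vertices than $G$; write $V(G_1)\cap V(G_2)=\{u,v\}$ for the two identified vertices and let $e=uv$ be the edge of $G_2$ deleted during the gluing, so that $uv\in E(G)$ iff $uv\in E(G_1)$. The base case is $|V(G)|=6$, the unique dual $4$-Ore graph obtained by gluing two copies of $K_4$: it is two copies of $K_4$ sharing the edge $uv$, with $u$ and $v$ of degree five and the four remaining vertices $a,b,c,d$ of degree three, where $ab$ and $cd$ are edges each having exactly the common neighborhood $\{u,v\}$. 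I would then take $X=\{a,b,c,d\}$: each of $a,b,c,d$ lies in the two triangles with $u$ and with $v$, so~(i) holds; $G[X]$ is the perfect matching $\{ab,cd\}$, so~(ii) holds; every three-element subset of $X$ contains both endpoints of $ab$ or both of $cd$, so~(iii) is vacuous; and each of $\{a,b\}$, $\{c,d\}$ has exactly two common neighbors, so~(iv) holds.

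For the induction step the key observation is that the two summands embed predictably. Every vertex of $V(G_1)\setminus\{u,v\}$ retains its degree, its incident triangles, its adjacencies inside $V(G_1)\setminus\{u,v\}$, and its common neighborhoods when we pass from $G_1$ to $G$, because no edge of $G_1$ is removed and only $u$ and $v$ acquire new edges. The same holds for $V(G_2)\setminus\{u,v\}$ except that deleting $e=uv$ from $G_2$ destroys exactly the triangles of $G_2$ through $e$; a vertex can thus drop below two triangles only if it is a common neighbor of $u$ and $v$, and, using that $uv\in E(G_2)$, one checks that if moreover $uv\in E(G)$ then no such drop occurs. I plan to build $X$ by selecting two suitable vertices from $V(G_1)\setminus\{u,v\}$ and two from $V(G_2)\setminus\{u,v\}$. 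On the $G_1$ side this is easy: if $G_1\neq K_4$, by the induction hypothesis $G_1$ has a set $X_1$ satisfying~(i)--(iv), and any two vertices of $X_1\setminus\{u,v\}$ (there are at least two) carry all the needed structure into $G$; if $G_1=K_4$, its two non-gluing vertices form an edge with common neighborhood $\{u,v\}$ and lie in triangles with $u$ and $v$, so they serve the same purpose (if $G_1=G_2=K_4$ we are in the base case). On the $G_2$ side one argues, using properties~(ii) and~(iii) of the set $X_2$ coming from the induction hypothesis when $G_2\neq K_4$, that at most two of its four vertices are ``bad'' --- equal to $u$ or $v$, or reduced to fewer than two triangles --- so at least two usable vertices remain; and when $G_2=K_4$ the two apices of the diamond $G_2-e$ are such vertices, forming an edge with common neighborhood $\{u,v\}$.

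It remains to check that the two pairs can be chosen so that~(ii),~(iii),~(iv) hold for the union $X$, and this bookkeeping is the main obstacle. Condition~(ii) is immediate since there are no edges between $V(G_1)\setminus\{u,v\}$ and $V(G_2)\setminus\{u,v\}$, so $G[X]$ is contained in the disjoint union of the at-most-one induced edge inside each chosen pair. Condition~(iv) follows because the only adjacent pairs in $X$ are the two chosen pairs, and a chosen pair contained in $V(G_i)\setminus\{u,v\}$ has the same common neighborhood in $G$ as in $G_i$ (respectively, is exactly $\{u,v\}$ for a $K_4$-summand), where~(iv) already holds. The delicate condition is~(iii): an independent triple inside $X$ must consist of one vertex of one chosen pair together with the whole other chosen pair, and a common neighbor of such a triple necessarily lies in $V(G_1)\cap V(G_2)=\{u,v\}$; so it suffices to choose the pairs so that, for each $w\in\{u,v\}$ that is a common neighbor of one chosen pair, that pair is an induced edge (killing the offending triples) or the other pair is not incident to $w$. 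When a gluing vertex lies in $X_i$ this is handed to us by property~(ii) of $X_i$ (the vertex is then adjacent to at most one vertex of $X_i$), and otherwise one uses property~(iii) of $X_i$ together with the freedom, among the usable vertices of a summand, to pick two that form an induced edge whenever $u$ or $v$ is adjacent to three of them. I expect the hard part of the write-up to be exactly this: reconciling the constraint that the selected vertices be usable with the constraint that they avoid making~(iii) fail, and disposing of a few degenerate configurations --- small summands, or a gluing vertex sitting in or adjacent to the prospective $X$ --- by direct inspection as in the base case.
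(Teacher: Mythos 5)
Your overall strategy coincides with the paper's: induct on the gluing decomposition, build $X$ from two vertices of $V(G_1)\setminus\{u,v\}$ and two of $V(G_2)\setminus\{u,v\}$, and use the fact that a cross-side independent triple can only have $u$ or $v$ as a common neighbor. The skeleton is sound, but the decisive content of the lemma is exactly the selection rule for the two pairs, and your proposal leaves it unfinished. Two concrete gaps: first, the claim that at most two of the four vertices of $X_2$ are ``bad'' (equal to a gluing vertex or dropping below two triangles) is asserted, not proved; ruling out, say, three vertices of $X_2\setminus\{u_2,v_2\}$ all adjacent to both $u_2$ and $v_2$ and all losing a triangle requires a nontrivial use of (ii)--(iv). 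Second, and more seriously, your sufficient condition for (iii) of the union is a \emph{joint} condition coupling the two sides (``that pair is an induced edge, or the other pair is not incident to $w$''), and you do not establish that usable pairs satisfying it always exist; you explicitly defer ``reconciling the constraint that the selected vertices be usable with the constraint that they avoid making (iii) fail''. That reconciliation is the proof.

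The paper closes this with a \emph{per-pair} dichotomy that your sketch lacks: on each side one can choose $Y_i\subseteq X_i\setminus\{u_i,v_i\}$ so that either (a) the two vertices of $Y_i$ are adjacent and have two common neighbors, or (b) they can be labelled $x_i,y_i$ with $x_iu_i\notin E(G_i)$ and $y_iv_i\notin E(G_i)$. (Prefer an edge of $G_i[X_i\setminus\{u_i,v_i\}]$ whose ends have a common neighbor; otherwise (iii) implies no three vertices of $X_i\setminus\{u_i,v_i\}$ share a neighbor, and a short case analysis on $|X_i\setminus\{u_i,v_i\}|\in\{2,3,4\}$, using (ii), produces the labelling.) This makes each pair safe independently of the other side: under (b) the pair has no common neighbor in $\{u,v\}$ at all, so (iii) for the union holds no matter what was chosen on the other side, and under (a) no independent triple uses both its vertices. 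It also disposes of (i) without any ``bad vertex'' counting: a vertex of $Y_2$ that loses a triangle when $e$ is deleted must be adjacent to both $u_2$ and $v_2$, contradicting (b), while under (a) the two common neighbors give two surviving triangles. Without this (or an equivalent) per-pair guarantee, your plan of first filtering triangle-deficient vertices and then coordinating the two sides is an outline of the difficulty rather than a solution to it.
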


\begin{proof}
We prove the claim by induction on $|V(G)|$.
The claim is trivially true if $G=K_4$, and thus we can assume this is not the case.
Hence, $G$ is obtained from dual $4$-Ore graphs $G_1$ and $G_2$ by gluing. 
Let $u_1$, $v_1$, $u_2$, $v_2$, and $e=u_2v_2$ be as in the definition of gluing.
Let $u$ and $v$ be the vertices of $G$ obtained by identifying $u_1,u_2$ and $v_1,v_2$, respectively.
For $i\in \{1,2\}$, let $X_i\subseteq V(G_i)$
be obtained by the induction hypothesis if $G_i\neq K_4$ and $X_i=V(G_i)$ if $G_i=K_4$.
Let $Y_i$ be a subset of $X'_i=X_i\setminus \{u_i,v_i\}$ of size two obtained as follows.
If $G_i[X'_i]$ has an edge whose ends have a common neighbor, then let $Y_i$ consists of the ends of such an edge.
In case that $G_i[X'_i]$ has no such edge, then note that (iii) implies that no three vertices of $X'_i$ have
a common neighbor.
\begin{itemize}
\item If $|X'_i|=2$, then let $Y_i=X'_i$.
\item If $|X'_i|=3$, we can assume that $v_i\in X_i$ (the case $u_i\in X_i$ is symmetric).
There exists a vertex $x_i\in X'_i$ non-adjacent to $u_i$, since no three vertices of $X'_i$ have
a common neighbor.  Moreover, by (ii), there exists a vertex $y_i\in X'_i\setminus\{x_i\}$
non-adjacent to $v_i$.  Let $Y_i=\{x_i,y_i\}$.
\item If $|X'_i|=4$, then since no three vertices of $X'_i$ have a common neighbor, there exists a vertex $x_i\in X'_i$ non-adjacent
to $u_i$, and a vertex $y_i\in X'_i\setminus\{x_i\}$ non-adjacent to $v_i$.  Let $Y_i=\{x_i,y_i\}$.
\end{itemize}
Note that either
\begin{itemize}
\item[(a)] the two vertices in $Y_i$ are adjacent and have two common neighbors, or
\item[(b)] the two vertices in $Y_i$ can be labeled as $x_i$ and $y_i$ so that $x_iu_i, y_iv_i\not\in E(G_i)$.
\end{itemize}
Indeed, (a) occurs when $G_i[X'_i]$ has an edge whose ends have a common neighbor by (iv) when $G_i\neq K_4$
and trivially when $G_i=K_4$.  Otherwise, (b) occurs; this is explicit in the last two cases, and follows by (ii) in the first case.

We claim that $X=Y_1\cup Y_2$ satisfies (i)--(iv).  Clearly, every vertex of $X$ has degree three in $G$, and the vertices of $Y_1$
are contained in at least two triangles.  If a vertex $z\in Y_2$ is contained in a triangle in $G_2$ that does not appear in
$G$, then $z$ is adjacent both to $u_2$ and to $v_2$, and thus $Y_2$ does not satisfy (b).  Hence, it satisfies (a),
the vertices of $Y_2$ are adjacent and have two common neighbors, and thus they are contained in at least two triangles.
Hence, $X$ satisfies the condition (i).

There are no edges in $G$ between $Y_1$ and $Y_2$, implying that (ii) holds.  Consider any three vertices
in $X$ forming an independent set; the two vertices of $Y_i$ for some $i\in\{1,2\}$, and a vertex from $Y_{3-i}$.
Note that their common neighbors can only be $u$ or $v$.  Since the vertices of $Y_i$ are non-adjacent, $Y_i$
does not satisfy (a), and thus it satisfies (b).  Since $x_i$ is not adjacent to $u_i$ and $y_i$ is not adjacent to $v_i$,
the three vertices do not have a common neighbor.  Therefore, (iii) holds.  Finally, if two vertices of $X$ are adjacent and have a common neighbor,
then for some $i\in\{1,2\}$, they both belong to $Y_i$, and they have two common neighbors in $G_i$ by (iv) if $G_i\neq K_4$
or trivially if $G_i=K_4$.  Hence, they also have two common neighbors in $G$, and (iv) holds.
\end{proof}

As the next step, let us reduce exceptional subgraphs in minimal counterexamples.
\begin{lemma}\label{lemma-noexc}
Let $\g=(G,\beta)$ be a minimal counterexample and let $A$ be a subset of its vertices.
If $|A|\ge 3$, then $G[A]$ is not exceptional.
\end{lemma}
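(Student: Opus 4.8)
The plan is to argue by contradiction: suppose $G[A]$ is exceptional with $|A|\ge 3$. Since $K_2$ is the only exceptional graph on fewer than four vertices, $|A|\ge 4$ and $G[A]$ is a dual $4$-Ore graph. I would first reduce to the case $G[A]=K_4$ by taking $A$ of minimum cardinality among the sets witnessing the lemma's failure: if $G[A]\neq K_4$, then a plane drawing of $G[A]$ is a gluing of dual $4$-Ore graphs $G_1$ and $G_2$, and the vertex set $V(G_1)\subseteq A$ induces exactly $G_1$ in $G$ — an exceptional graph with at least three and fewer than $|A|$ vertices — contradicting minimality. (Lemma~\ref{lemma-tri3} could be used instead to avoid this reduction, but the $K_4$ reduction is cleaner for what follows.) So from now on $A=\{a_1,a_2,a_3,a_4\}$ induces a $K_4$.

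Next I would show $\beta$ vanishes on $A$. As $G[A]$ is connected with at least two vertices, $\g/A$ has a nowhere-zero flow $f$, and by Observation~\ref{obs-restr} the bordered graph $(G[A],\beta_f)$ has none; since $G[A]$ is exceptional, Lemma~\ref{lemma-single} forces $\beta_f\equiv 0$. Applying this to $-f$ as well and adding the two identities yields $2\beta(v)=0$, hence $\beta(v)=0$, for every $v\in A$. Consequently each $a_i$ has $\beta(a_i)=0$, has degree three in $G[A]$, and lies in three triangles of $G$; were its degree in $G$ also three, Lemma~\ref{lemma-nottri} would give a contradiction. Hence every $a_i$ has a neighbour outside $A$.

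The core of the proof is a reduction replacing the $K_4$ by an edge. Let $G'$ be obtained from $G$ by contracting $a_1a_2$ to a vertex $x$, contracting $a_3a_4$ to a vertex $y$, and deleting all but one of the four resulting parallel $xy$-edges; put $\beta'=\beta$ on $V(G)\setminus A$ and $\beta'(x)=\beta(a_1)+\beta(a_2)$, $\beta'(y)=\beta(a_3)+\beta(a_4)$. I claim $(G',\beta')$ has no nowhere-zero flow. Indeed, given one, keep its values on every edge of $G$ not inside $A$; conservation then forces the six edges of $G[A]$ to carry an antisymmetric flow with boundary $\tau_i=\beta(a_i)-\sigma_i$ at $a_i$, where $\sigma_i$ is the inflow to $a_i$ along edges leaving $A$. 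A short computation gives $\tau_1+\tau_2$ equal to the (nonzero) value on the edge $xy$, so $\tau\not\equiv 0$; by Lemma~\ref{lemma-single}, $(G[A],\tau)$ has a nowhere-zero flow, and together these produce a nowhere-zero flow in $\g$ — a contradiction. Moreover $G'$ is a minor of $G$ (two contractions, three deletions), so $g(G')\le g(G)$, and since $|V(G')|=|V(G)|-2$ and $|E(G')|=|E(G)|-5$ we get $\pi(G')=\pi(G)-5\bigl(g(G)-g(G')\bigr)\le\pi(G)\le 8$.

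To conclude, $(G',\beta')$ (which is connected) has a flow-critical contraction $(G',\beta')/\QQ$ on fewer than $|V(G)|$ vertices, hence sparse by minimality, so $\pi(G'/\QQ)\ge 8$. Lift $\QQ$ to a $G$-connected partition $\hat\QQ$ of $V(G)$ by putting $\{a_1,a_2\}$ in place of $x$ and $\{a_3,a_4\}$ in place of $y$. If $x,y$ lie in the same part of $\QQ$, then $G'/\QQ\cong G/\hat\QQ$ while that part of $\hat\QQ$ is non-trivial, so Corollary~\ref{cor-add} gives $\pi(G'/\QQ)<9-w(G,\hat\QQ)\le 6$, absurd. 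Otherwise $x,y$ lie in distinct parts, whose images are distinct non-trivial (of size $\ge 2$) parts of $\hat\QQ$, so $w(G,\hat\QQ)\ge 6$; as $G'/\QQ$ is $G/\hat\QQ$ with three parallel edges deleted and has no larger genus, $\pi(G'/\QQ)\le\pi(G/\hat\QQ)+6<15-w(G,\hat\QQ)\le 9$, forcing $\pi(G'/\QQ)=8$ and $G'/\QQ$ exceptional. Then $w(G,\hat\QQ)=6$, so the only non-trivial parts of $\hat\QQ$ are the two images above, and they induce exceptional subgraphs; unwinding the equalities gives $\pi(G)=8+5g(G)$, hence $g(G)=0$. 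Finally, applying the $\pm f$ argument to these two exceptional parts and using that the boundary of the flow-critical, exceptional graph $(G',\beta')/\QQ$ is identically zero (Lemma~\ref{lemma-single}), one checks $\beta\equiv 0$; but then $G$ is a planar $\zz$-flow-critical graph, so it is sparse by Observation~\ref{obs-koyan} — the final contradiction. The main obstacle I anticipate is exactly this last stretch: correctly propagating $g$, $\pi$, and $w(G,\cdot)$ through the reduction and the contraction, and in particular eliminating the borderline case where $(G',\beta')/\QQ$ is exceptional, which is the only point at which the planar base case and the exact value $\pi=8$ of exceptional graphs are both needed.
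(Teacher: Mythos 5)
Your overall strategy is the paper's: reduce to $G[A]=K_4$, contract the two pairs $\{a_1,a_2\}$, $\{a_3,a_4\}$, delete the surplus parallel edges, show $(G',\beta')$ has no nowhere-zero flow via Lemma~\ref{lemma-single}, pass to a flow-critical contraction, lift the partition, and use the potential count to force exactly two exceptional non-trivial parts and an exceptional quotient, giving $\pi(G)=8+5g(G)$ and hence planarity. The arithmetic in that part checks out. However, there is a genuine gap exactly where you predicted trouble: the deduction $\beta\equiv 0$, which is indispensable because Observation~\ref{obs-koyan} only applies to zero boundary. The ``$\pm f$ argument'' (Corollary~\ref{cor-zerobo}) requires that $\beta$ vanish outside the \emph{single} set $B$ with $G[B]$ exceptional: that hypothesis is what makes $-f$ a flow of the same bordered contraction $\g/B$, so that both $\beta_f$ and $\beta_{-f}$ are forced to vanish. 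At the end of your argument you only know that $\beta$ vanishes outside the union $\hat B_1\cup\hat B_2$ of the two exceptional parts; applied to $\hat B_1$ alone, $-f$ is a flow of $(G/\hat B_1,-\beta/\hat B_1)$, not of $\g/\hat B_1$, since $\beta$ may be non-zero on $\hat B_2$, and the trick yields nothing (contracting both parts simultaneously does not help either, because Lemma~\ref{lemma-single} then only tells you that for each flow \emph{at least one} of the two induced boundaries vanishes, possibly a different one for $f$ and for $-f$). The same misuse occurs in your earlier step claiming $\beta|_A=0$ directly from $\g/A$; that claim is unjustified at that point (though nothing later seems to depend on it).

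The missing ingredient is the paper's structural step: show that $G[\hat B_1\cup\hat B_2]$ is itself a single exceptional graph, and only then invoke Corollary~\ref{cor-zerobo} once. Concretely, since the flow-critical graph $(G',\beta')/\QQ$ has no parallel edges (Observation~\ref{obs-basic}), the only edge of $G'$ between the two parts is $h$, so the only $G$-edges between $\hat B_1$ and $\hat B_2$ are the four $K_4$-edges and $G[\hat B_1\cup\hat B_2]=G[\hat B_1]\cup G[A]\cup G[\hat B_2]$; this graph is dual $4$-Ore because it arises by gluing $G[\hat B_1]$ to $K_4$ along the edge $a_1a_2$ (the chosen vertices are adjacent, hence cofacial) and then gluing $G[\hat B_2]$ along $a_3a_4$. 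Incidentally, this ``no extra cross edges'' fact is also what legitimizes reading off the edge decomposition behind $\pi(G)=8+5g(G)$ in the paper's presentation. Without some argument of this kind your final contradiction is not reached, so the proposal as written is incomplete.
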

\begin{proof}
Suppose for a contradiction that $G[A]$ is exceptional.  It is straightforward to prove by induction that
every exceptional graph other than $K_2$ contains $K_4$
as a subgraph, and thus by choosing $A$ minimal, we can assume that $G[A]=K_4$.
Let $\{A_1,A_2\}$ be a partition of $A$ into parts of size two.
Let $\g'=(G',\beta')$ be the $\zz$-bordered graph obtained from $\g/A_1/A_2$
by removing all but one edge $h$ between the vertices $a_1$ and $a_2$ resulting from the contraction of $A_1$ and $A_2$,
respectively.

Suppose that $\g'$ has a nowhere-zero flow $f'$.  We can view this flow as assigning non-zero values to all edges
of the symmetric orientation of $E(G)\setminus E(G[A])$.  Let $\g_A=(G[A],\beta_A)$, 
where $\beta_A(x)=\beta(x)-\sum_{e=xy\in E(G)\setminus E(G[A])} f'(e_y)$ for every $x\in A$.  Note that
$$\sum_{x\in A_1} \beta_A(x)=\beta'(a_1)-\sum_{e=a_1y\in E(G')\atop e\neq h} f'(e_y)=f'(h_{a_2})\neq 0,$$
and thus the boundary function of $\g_A$ is non-zero.  By Lemma~\ref{lemma-single}, $\g_A$ has a nowhere-zero flow;
however, this flow would combine with $f'$ to a nowhere-zero flow in $G$, which is a contradiction.

Therefore, $\g'$ does not admit a nowhere-zero flow, and thus there exists a $G'$-connected partition $\PP'$
of $V(G')$ such that $\g'/\PP'$ is flow-critical.  Note that $a_1$ and $a_2$ belong to different parts $B'_1$ and $B'_2$ of $\PP'$,
as otherwise $\g'/\PP'$ would be a proper contraction of $\g$, in contradiction to the flow-criticality of $\g$.
For $i\in\{1,2\}$, let $B_i=(B'_i\setminus\{a_i\})\cup A_i$, and let
$\PP=(\PP'\setminus \{B'_1,B'_2\})\cup \{B_1,B_2\}$. Clearly, $\PP$ is a $G$-connected partition of $V(G)$.
Note that $g(G/\PP)=g(G'/\PP')$, since the two graphs differ only in that $h$ is replaced by a quadruple edge.
By Corollary~\ref{cor-add}, we see that $9-w(G,\PP)>\pi(G/\PP)=\pi(G'/\PP')-6$.  Since $|B_1|,|B_2|\ge 2$, we
have $n(G,\PP)+k(G,\PP)\ge 2$ and $w(G,\PP)\ge 6$.  Hence, this is only possible if $n(\PP)=0$, $k(\PP)=2$, and $\pi(G'/\PP')=8$.
That is, $G[B_1]$, $G[B_2]$, and $G'/\PP'$ are exceptional graphs and $B_1$ and $B_2$ are the only non-trivial parts of $\PP$.
By Lemma~\ref{lemma-single}, $\beta'/\PP'$ is the zero function, and thus $\beta(x)=0$ for every $x\in V(G)\setminus (B_1\cup B_2)$.
Note that $G[B_1]\cup G[A]\cup G[B_2]$ is an exceptional graph, as it is obtained as follows:
\begin{itemize}
\item Let $A_1=\{u_2,v_2\}$.  If $B_1=A_1$, then let $H=G[A]$.  Otherwise, let $H_1$ be a copy of
the dual 4-Ore graph $G[B_1]$ with $u_2$ renamed to $u_1$ and $v_2$ renamed to $v_1$;
and let $H$ be the graph obtained by gluing $H_1$ (on $u_1$ and $v_1$) with $G[A]$ (on the edge $u_2v_2$).
In either case, $H$ is isomorphic to $G[B_1]\cup G[A]$, and it is a dual $4$-Ore graph.
\item Similarly, we then glue $G[B_2]$ with $H$ to obtain a graph isomorphic to $G[B_1]\cup G[A]\cup G[B_2]$.
\end{itemize}
The graph $G'$ does not have any edge parallel to $h$ by Observation~\ref{obs-basic}, and thus
$G-E(G[A])$ does not have any edges between $B_1$ and $B_2$; it follows that $G[B_1\cup B_2]=G[B_1]\cup G[A]\cup G[B_2]$.
As we have observed before, $\beta(x)=0$ for every $x\in V(G)\setminus (B_1\cup B_2)$, and thus Corollary~\ref{cor-zerobo} implies that $\beta$ is the zero function.  Since $G'/\PP'$, $G[B_1]$ and $G[B_2]$ are planar, we have
$$\pi(G)=\pi(G'/\PP')-2\cdot 5-3\cdot 2+\pi(G[B_1])+\pi(G[B_2])+5g(G)=8+5g(G).$$
Since $\pi(G)<9$, this implies that $G$ is planar.  However, this contradicts Observation~\ref{obs-koyan}.
\end{proof}

Let us now restrict triangles in minimal counterexamples.

\begin{lemma}\label{lemma-splitoff}
Let $\g=(G,\beta)$ be a minimal counterexample.  Let $e_1=u_1v$ and $e_2=u_2v$ be edges of $G$ incident with the same vertex $v$ of degree
at least four.  If $u_1u_2\in E(G)$, then $v$ is the only common neighbor of $u_1$ and $u_2$.
\end{lemma}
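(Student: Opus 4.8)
The plan is to argue by contradiction: suppose $u_1$ and $u_2$ have a common neighbour $w\neq v$, and apply the potential method to the graph $G'$ obtained from $G$ by splitting off $e_1$ and $e_2$ at $v$. Since $u_1u_2\in E(G)$ already, the edge created by the split-off runs parallel to the existing edge $u_1u_2$, so $G'$ contains a pair of parallel edges between $u_1$ and $u_2$. By Corollary~\ref{cor-connect}, $G$ is $3$-edge-connected, so $G-e_1-e_2\subseteq G'$ is connected, hence $G'$ is connected; and by Observation~\ref{obs-split} together with the flow-criticality of $\g$, the bordered graph $(G',\beta)$ has no nowhere-zero flow. Let $\g'/\PP'$ be a flow-critical contraction of $(G',\beta)$. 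Because flow-critical $\zz$-bordered graphs are simple (Observation~\ref{obs-basic}), the parallel $u_1u_2$ edges of $G'$ force $u_1$ and $u_2$ to lie in a common part $B'$ of $\PP'$; and since $w$ is also a common neighbour of $u_1$ and $u_2$ in $G'$, simplicity of $G'/\PP'$ would be violated unless $w\in B'$ as well, so $|B'|\ge 3$. Finally, $\PP'$ is a partition of $V(G')=V(G)$, and it is $G$-connected, since any part rendered connected in $G'$ via the new $u_1u_2$ edge remains connected in $G$ via the original edge $u_1u_2$.

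I would then split on whether $v\in B'$. If $v\in B'$, then in $G/\PP'$ the images of $e_1$ and $e_2$ are loops, so $G/\PP'=G'/\PP'$ and $\pi(G'/\PP')=\pi(G/\PP')$. Since $B'$ is non-trivial, $G'/\PP'$ is a flow-critical graph with fewer vertices than $G$, hence sparse, so $\pi(G'/\PP')\ge 8$; but Corollary~\ref{cor-add} gives $\pi(G/\PP')<9-w(G,\PP')\le 6$ because $B'$ contributes at least $3$ to $w(G,\PP')$, a contradiction. So $v\notin B'$, and then $G/\PP'$ is $G'/\PP'$ with a single pair of parallel edges added (the images of $e_1$ and $e_2$, which have the same endpoints), so $\pi(G/\PP')\ge \pi(G'/\PP')-4$; combining this with Corollary~\ref{cor-add} and $w(G,\PP')\ge 3$ yields $\pi(G'/\PP')\le \pi(G/\PP')+4<13-w(G,\PP')\le 10$, whence $\pi(G'/\PP')\in\{8,9\}$ and $w(G,\PP')\in\{3,4\}$; the latter forces $B'$ to be the only non-trivial part of $\PP'$.

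It then remains to eliminate the two sub-cases. If $w(G,\PP')=3$, then $G[B']$ is exceptional with $|B'|\ge 3$, contradicting Lemma~\ref{lemma-noexc}. If $w(G,\PP')=4$, then $G[B']$ is not exceptional but $\pi(G'/\PP')=8$; since $G'/\PP'=G'/B'$ is a smaller flow-critical graph, it is sparse, hence exceptional, so by Lemma~\ref{lemma-single} its boundary is identically zero, i.e., $\beta$ vanishes on $V(G)\setminus B'$ and sums to $0$ on $B'$. Here I would invoke the degree hypothesis: $v$ is adjacent in $G'$ to at most one vertex of $B'$ (otherwise $G'/B'$ has parallel edges), so $v$ has degree $\deg_G(v)-2$ in $G'/B'$; if $\deg_G(v)=4$ this degree is $2$, contradicting that the exceptional graph $G'/B'$ is either $K_2$ (degree $1$ at $v$, forcing $\deg_G(v)=3$) or a dual $4$-Ore graph of minimum degree at least three. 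For $\deg_G(v)\ge 5$ one finishes by combining a nowhere-zero flow of $\g/B'$ with a suitable flow on $G[B']$ (as in the proofs of Lemmas~\ref{lemma-nottri} and~\ref{lemma-noexc}, via Observations~\ref{obs-restr} and~\ref{obs-koyan} and Corollary~\ref{cor-zerobo}) to conclude $\beta\equiv 0$, so that $\pi(G)=8+5g(G)<9$ makes $G$ a planar $\zz$-flow-critical graph, contradicting Observation~\ref{obs-koyan}.

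The main obstacle is exactly this last sub-case ($v\notin B'$ with $w(G,\PP')=4$): the potential count alone leaves $G'/B'$ exceptional while $G[B']$ is not, and a contradiction must be extracted from the interplay of the degree-at-least-four hypothesis, the forced membership $w\in B'$, and the rigidity of exceptional graphs — either through the minimum-degree obstruction sketched above or by propagating the vanishing of the boundary from $B'$ back to all of $G$. Everywhere else the argument is the standard split-off / flow-critical-contraction / potential bookkeeping.
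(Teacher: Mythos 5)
Your opening is sound and follows the paper's own route: split off $e_1,e_2$, take a flow-critical contraction $\g'/\PP$ of $(G',\beta)$, use the parallel $u_1u_2$ edges and simplicity of flow-critical graphs to force $u_1,u_2$ and the second common neighbour into one part $B$, rule out $v\in B$ (your potential-based dismissal of that case is a harmless variant of the paper's direct appeal to flow-criticality), and then use Corollary~\ref{cor-add} together with Lemma~\ref{lemma-noexc} to conclude that $B$ is the only non-trivial part, $G[B]$ is not exceptional, $G'/B$ is exceptional, and $\beta$ vanishes off $B$. The genuine gap is exactly the sub-case you flag as the ``main obstacle,'' and neither of your two proposed exits closes it. The minimum-degree observation only eliminates $\deg_G(v)=4$ (and $G'/B=K_2$); for $\deg_G(v)\ge 5$ the vertex $v$ has degree at least $3$ in the dual $4$-Ore graph $G'/B$, so no contradiction arises. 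The fallback of ``propagating $\beta\equiv 0$ to all of $G$, computing $\pi(G)=8+5g(G)$, and contradicting Observation~\ref{obs-koyan}'' cannot work as stated: both Corollary~\ref{cor-zerobo} and the potential computations of that shape in Lemmas~\ref{lemma-nottri} and~\ref{lemma-noexc} require the induced subgraph on the non-trivial part to be exceptional, whereas here $G[B]$ is by definition not exceptional (that is precisely why $w(G,\PP)=4$), so there is no decomposition of $G$ into exceptional pieces; and even if $\beta\equiv 0$ were established, Observation~\ref{obs-koyan} yields nothing without planarity, which you have no way to derive.

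The paper closes this case by a different device, which your proposal never invokes. First, $G'/B\neq K_2$ because $G'$ is $2$-edge-connected (this is where $\deg_G(v)\ge 4$ enters, via Corollary~\ref{cor-connect}), and $G'/B\neq K_4$ because that would produce two adjacent degree-three vertices of $G$, again contradicting Corollary~\ref{cor-connect}. Then Lemma~\ref{lemma-tri3} supplies four degree-three vertices of the dual $4$-Ore graph $G'/B$, each lying in at least two triangles, with properties (ii) and (iii) strong enough to select one such vertex $x_1$ distinct from $v$ and not adjacent to the contracted vertex $b$ (here Corollary~\ref{cor-connect} and the vanishing of $\beta$ off $B$ are used to make the candidate vertices independent so that (iii) applies). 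Such an $x_1$ is a genuine vertex of $G$ of degree three with $\beta(x_1)=0$ contained in two triangles of $G$, contradicting Lemma~\ref{lemma-nottri}. Without this structural argument about dual $4$-Ore graphs (or some equivalent), the sub-case $v\notin B$, $w(G,\PP)=4$, $\deg_G(v)\ge 5$ remains open, so your proof is incomplete.
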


\begin{proof}
Suppose for a contradiction that $u_1$ and $u_2$ have another common neighbor $v'\neq v$.
Let $G'$ be the graph obtained from $G$ by splitting off $e_1$ and $e_2$, and let $e$ be the resulting edge between $u_1$ and $u_2$.
Since $G$ is $3$-edge-connected, $G'$ is connected.
Since $\g$ is flow-critical, Observation~\ref{obs-split} implies that $\g'=(G',\beta)$ does not have a nowhere-zero flow, and thus there exists
a $G'$-connected partition $\PP$ of $V(G)$ such that $\g'/\PP$ is flow-critical.  Since $u_1$ and $u_2$ are joined by a double edge in $G'$,
Observation~\ref{obs-basic} implies that $u_1$ and $u_2$ are in the same part $B$ of $\PP$.  If $v'$ were not in this part, then $G'/\PP$
would contain a double edge between the vertex $b$ corresponding to $B$ and the vertex corresponding to the part containing $v'$; by Observation~\ref{obs-basic}
this does not happen, and thus we conclude that $v'\in B$.  Note that $\PP$ is also $G$-connected, since $E(G')\setminus E(G)=\{e\}$ and $u_1$ and $u_2$
are adjacent in $G$.  Since $G'/\PP$ is not a proper contraction of $G$, we have $v\not\in B$.
Note also that $G'/\PP$ is a subgraph of $G/\PP$, and thus $g(G'/\PP)\le g(G/\PP)$.
By Corollary~\ref{cor-add}, we have
\begin{align*}
  9-w(G,\PP) & > \pi(G/\PP) \\
     & =\pi(G'/\PP)-4+5(g(G/\PP)-g(G'/\PP))\\
     &\ge \pi(G'/\PP)-4.
\end{align*}
Since $|B|\ge 3$, Lemma~\ref{lemma-noexc} implies that $G[B]$ is not exceptional, and thus $n(G,\PP)\ge 1$.
Therefore, $n(G,\PP)=1$, $B$ is the only non-trivial part of $\PP$, and $\pi(G'/\PP)=8$, i.e., $G'/\PP$ is exceptional.
By Lemma~\ref{lemma-single}, we have $\beta(x)=0$ for every $x\in V(G)\setminus B$.  Since $v$ has degree at least four,
Corollary~\ref{cor-connect} implies that $G'$ (and thus also $G'/\PP$) is 2-edge-connected, and thus $G'/\PP\neq K_2$.
If $G'/\PP=K_4$, then the two vertices of $G$ not belonging to $B\cup\{v\}$ have degree three and are adjacent,
contradicting Corollary~\ref{cor-connect}.

Therefore, $G'/\PP\not\in \{K_2,K_4\}$. Let $X$ be a set of four vertices of $G'/\PP$ of degree three, each contained in at least two triangles,
obtained using Lemma~\ref{lemma-tri3}. Let $x_1$ and $x_2$ be two vertices of $X$ different from $v$ and the vertex $b$
obtained by the contraction of $B$.  If $b\in X$, then by Lemma~\ref{lemma-tri3}(ii), we can assume that $x_1b\not\in E(G'/\PP)$.
If $b\not\in X$, then $X$ contains another vertex $x_3$ distinct from $x_1$, $x_2$, and $v$.  
For $i\in \{1,2,3\}$, the vertex $x_i$ has degree three in $G$ and $\beta(x_i)=0$.
Hence, Corollary~\ref{cor-connect} implies that $\{x_1,x_2,x_3\}$ is an independent set, and thus by Lemma~\ref{lemma-tri3}(iii), we can again assume that $x_1b\not\in E(G'/\PP)$.
But then $x_1$ is a vertex of degree three contained in at least two triangles in $G$ and satisfying $\beta(x_1)=0$, contradicting Lemma~\ref{lemma-nottri}.
\end{proof}

This has the following consequence on adjacency among triangles.

\begin{corollary}\label{cor-notria}
If $\g=(G,\beta)$ is a minimal counterexample, then
for every triangle $T$ in $G$, at most one edge of $T$ belongs to another
triangle.
\end{corollary}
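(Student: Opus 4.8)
The plan is to assume for contradiction that some triangle $T$ of $G$ has two edges each lying in a further triangle, and to extract a contradiction from Lemma~\ref{lemma-splitoff} together with Corollary~\ref{cor-connect}. Since any two edges of a triangle meet in a vertex, I may write $V(T)=\{a,b,c\}$ so that the two offending edges are $ab$ and $bc$. That $ab$ lies in a triangle other than $T$ means there is a vertex $x\neq c$ with $ax,bx\in E(G)$; likewise there is a vertex $y\neq a$ with $by,cy\in E(G)$. Thus $y$ is a common neighbour of $b$ and $c$ distinct from $a$, and $x$ is a common neighbour of $a$ and $b$ distinct from $c$.

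Next I would bound the degrees of $a$ and $c$. If $\deg_G(a)\geq 4$, apply Lemma~\ref{lemma-splitoff} with $v=a$ and the edges $e_1=ab$, $e_2=ac$ (legal, since $u_1u_2=bc\in E(G)$); it asserts that $a$ is the only common neighbour of $b$ and $c$, contradicting the existence of $y$. Symmetrically, if $\deg_G(c)\geq 4$, Lemma~\ref{lemma-splitoff} applied with $v=c$ and $e_1=cb$, $e_2=ca$ (legal, since $ba\in E(G)$) makes $c$ the unique common neighbour of $a$ and $b$, contradicting the existence of $x$. Since $G$ is $3$-edge-connected by Corollary~\ref{cor-connect}, it has minimum degree at least three, so both $a$ and $c$ must have degree exactly three. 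But $ac\in E(G)$, which contradicts the last clause of Corollary~\ref{cor-connect} stating that no two degree-three vertices are adjacent.

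I do not anticipate a genuine obstacle: the whole argument amounts to two invocations of Lemma~\ref{lemma-splitoff} plus a citation of Corollary~\ref{cor-connect}. The only mild subtlety is the bookkeeping that ensures $x$ and $y$ are really distinct from the apex vertices $c$ and $a$, which is precisely the content of the hypothesis that $ab$ and $bc$ lie in triangles \emph{different from} $T$; once that is recorded, the two contradictions are immediate.
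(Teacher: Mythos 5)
Your argument is correct and is essentially the paper's own proof, written out in detail: the paper likewise deduces from Lemma~\ref{lemma-splitoff} that the two vertices of $T$ not shared by the offending edges must have degree three, and then contradicts the clause of Corollary~\ref{cor-connect} forbidding adjacent degree-three vertices. Your bookkeeping that the third vertices of the extra triangles differ from the apexes $a$ and $c$ is exactly the point the paper leaves implicit.
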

\begin{proof}
By Lemma~\ref{lemma-splitoff}, if two of the edges of $T$ were contained in triangles different from $T$,
then two vertices of $T$ would have degree three, contradicting Corollary~\ref{cor-connect}.
\end{proof}

We are now ready to finish the proof.

\begin{proof}[Proof of Theorem~\ref{thm-main3}]
If Theorem~\ref{thm-main3} were false, then there would exist a minimal counterexample $\g=(G,\beta)$.
Consider a drawing of $G$ in a surface of Euler genus $g(G)$.  Since $G$ is simple (by Observation~\ref{obs-basic})
and has more than two vertices (by Observation~\ref{obs-size}), each face of $G$ has length at least three,
and each face of length three is bounded by a triangle.  Let $f_k$ denote the number
of faces of $G$ of length $k$ and let $f$ denote the number of all faces of $G$.
By Corollary~\ref{cor-notria}, every $3$-face of $G$ shares an edge with at least two faces of length at least four.
This implies that $f_3 \le \tfrac{1}{2}\sum_{k\ge4} kf_k$. Using this and the fact that $k-\tfrac{10}{3}\ge \tfrac{1}{6}k$ for every $k\ge 4$, we obtain the following inequality:
\begin{align*}
    2|E(G)| &= \sum_{k\ge 3} kf_k \\
      &= \tfrac{10}{3}f + \sum_{k\ge 3} (k-\tfrac{10}{3})f_k \\
      &\ge \tfrac{10}{3}f - \tfrac{1}{3}f_3 + \tfrac{1}{6}\sum_{k\ge 4}kf_k \\
      &\ge \tfrac{10}{3}f.
\end{align*}

By Euler's formula,
$$|E(G)|=|V(G)|+f+g(G)-2\le |V(G)|+\tfrac{3}{5}|E(G)|+g(G)-2.$$
Therefore,
$$10\le 5|V(G)|-2|E(G)|+5g(G)=\pi(G),$$
contradicting the assumption that $\g$ is a minimum counterexample.
\end{proof}

\section{Minimal counterexamples to the density conjecture}\label{sec-cex}

For a graph $H$, let $\sigma(H)=3|V(H)|-|E(H)|$.
We say a $\zz$-bordered graph $\g=(G,\beta)$ is a \emph{smallest counterexample} if $\g$ is flow-critical,
$\sigma(G)<5$, and every flow-critical $\zz$-bordered graph $(G',\beta')$ with less than $|V(G)|$ vertices
satisfies $\sigma(G')\ge 5$.

For a partition $\PP$, let $l(\PP)$ denote the number of parts of $\PP$ of size at least two.
The following claim is proved analogously to Corollary~\ref{cor-add}.

\begin{lemma}\label{lemma-addprime}
If $\g=(G,\beta)$ is a smallest counterexample, then for every partition $\PP$ of $V(G)$ with at least two parts, we have
$$\sigma(G/\PP)<5-2l(\PP).$$
\end{lemma}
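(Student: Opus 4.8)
The plan is to mimic the proof of Lemma~\ref{lemma-add} (and hence of Corollary~\ref{cor-add}), but tracking the simpler parameter $\sigma$ instead of $\pi$, and counting only the number $l(\PP)$ of non-trivial parts rather than distinguishing exceptional parts. First I would set up a reverse induction on $|\PP|$: if $\PP$ is trivial then $l(\PP)=0$ and the claim reads $\sigma(G)<5$, which is exactly the hypothesis that $\g$ is a smallest counterexample. So assume $|\PP|<|V(G)|$, that the claim holds for all partitions with more parts, and pick a part $B$ of $\PP$ with $|B|\ge 2$.

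Next I would run the same reduction as in Lemma~\ref{lemma-add}: take a nowhere-zero flow $f$ in $\g/B$, form the boundary $\beta_f$ on $G[B]$ from Observation~\ref{obs-restr} so that $\g_B=(G[B],\beta_f)$ has no nowhere-zero flow, and let $\g_B/\QQ$ be a flow-critical contraction of $\g_B$, with $|\QQ|\ge 2$. Set $\PP'=(\PP\setminus\{B\})\cup\QQ$, so $|\PP'|>|\PP|$. The key bookkeeping identities are the $\sigma$-analogues of (\ref{eq:L3.4_1})--(\ref{eq:L3.4_2p}): since $\sigma(H)=3|V(H)|-|E(H)|$ involves no genus term, I get cleanly
\begin{align*}
  \sigma(G/\PP') &= \sigma(G/\PP) + \sigma(G[B]/\QQ) - 3,
\end{align*}
because the identified vertex of $G[B]/\QQ$ is glued onto the vertex $b$ of $G/\PP$, contributing $3(|V(G[B]/\QQ)|-1)$ to the vertex count and $|E(G[B]/\QQ)|$ to the edge count. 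Applying the induction hypothesis to $\PP'$ then gives
\begin{align*}
  \sigma(G/\PP) &= \sigma(G/\PP') - \sigma(G[B]/\QQ) + 3 \\
     &< 5 - 2l(\PP') - \sigma(G[B]/\QQ) + 3.
\end{align*}

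The last step is to compare $l(\PP')$ with $l(\PP)$ and bound $\sigma(G[B]/\QQ)$ from below. Since $\QQ$ has at least two parts and replaces the single non-trivial part $B$ of $\PP$, we have $l(\PP')\ge l(\PP)-1$, i.e. $l(\PP)\le l(\PP')+1$, hence $2l(\PP)\le 2l(\PP')+2$. For the term $\sigma(G[B]/\QQ)$: if $G[B]$ has at least three vertices then so does $G[B]/\QQ$, which is a flow-critical $\zz$-bordered graph on fewer vertices than $G$ (as $\PP$ has at least two parts, $|B|<|V(G)|$), so minimality gives $\sigma(G[B]/\QQ)\ge 5$; if $|B|=2$ then $G[B]=K_2$, $\beta_f$ is forced to be zero, $\QQ$ is trivial, and $G[B]/\QQ=K_2$ with $\sigma(K_2)=3\cdot2-1=5$ — so in all cases $\sigma(G[B]/\QQ)\ge 5$. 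Plugging in, $\sigma(G/\PP)<5-2l(\PP')-5+3\le 5-2l(\PP)+2-5+3=5-2l(\PP)$, completing the induction. The only point that needs genuine care — and the place I expect subtlety — is checking the $|B|=2$ base case, i.e. that a two-element part contributes a flow-critical $K_2$ with $\sigma=5$ and trivial $\QQ$; once that is pinned down the inequality is forced, and the whole argument is strictly easier than Lemma~\ref{lemma-add} because there is no genus term and no exceptional-graph case analysis.
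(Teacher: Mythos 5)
Your induction step is the same as the paper's and the bookkeeping is right, but it silently assumes that the part $B$ you process induces a connected subgraph of $G$, and the lemma is stated (and later needed) for \emph{arbitrary} partitions $\PP$ of $V(G)$. If $G[B]$ is disconnected, your very first move already fails: $\g/B$ is then not a contraction of $\g$ in the paper's sense, so flow-criticality does not give you a nowhere-zero flow $f$ in $\g/B$, and Observation~\ref{obs-restr} (which explicitly requires $G[B]$ connected) cannot be invoked to produce the boundary $\beta_f$ --- indeed $\beta_f$ need not even be a $\zz$-boundary on a disconnected $G[B]$. This case cannot be waved away, because in the proof of Theorem~\ref{thm-smallcex} the lemma is applied to partitions coming from split-off graphs (e.g.\ $\PP_1=\PP_0\cup\{\{v\}\}$, where parts of $\PP_0$ are connected only via the new edge $v_1v_2$, which need not lie in $E(G)$), so non-$G$-connected partitions genuinely occur. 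The paper handles this with a short preliminary reduction inside the same reverse induction: if some part $B$ is disconnected, split it into $B_1,B_2$ with no edges of $G$ between them, set $\PP'=(\PP\setminus\{B\})\cup\{B_1,B_2\}$, and note that $\sigma(G/\PP)=\sigma(G/\PP')-3$ (one vertex fewer, no edges lost) while $l(\PP')\ge l(\PP)-1$, so the induction hypothesis for $\PP'$ gives $\sigma(G/\PP)<2-2l(\PP')\le 4-2l(\PP)<5-2l(\PP)$. Adding this case makes your argument complete.

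Two smaller remarks: your side claim ``if $G[B]$ has at least three vertices then so does $G[B]/\QQ$'' is false ($\QQ$ may contract $B$ down to two vertices), but it is also unnecessary --- the definition of smallest counterexample bounds $\sigma$ from below for \emph{every} flow-critical $\zz$-bordered graph on fewer than $|V(G)|$ vertices, which covers $G[B]/\QQ$ (including the $K_2$ case with $\sigma(K_2)=5$) in one stroke, exactly as the paper does; so your separate $|B|=2$ analysis, while correct when $G[B]=K_2$, is not needed and does not cover $|B|=2$ with the two vertices non-adjacent (that situation again belongs to the disconnected case above).
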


\begin{proof}
We prove the claim by reverse induction on $|\PP|$.  If $|\PP|=|V(G)|$, i.e., $\PP$ is trivial, then $l(\PP)=0$
and the claim holds since $\g$ is a smallest counterexample.
Hence, suppose $|\PP|<|V(G)|$ and that the claim holds for every partition with more parts.

If $\PP$ is not $G$-connected, then there exists $B\in \PP$ and a partition $\{B_1,B_2\}$ of $B$ to non-empty parts
such that $G$ contains no edge between $B_1$ and $B_2$.  Let $\PP'=(\PP\setminus \{B\})\cup\{B_1,B_2\}$.
Then
$$\sigma(G/\PP)=\sigma(G/\PP')-3<2-2l(\PP')\le 4-2l(\PP)$$
by the induction hypothesis.

Hence, we can assume that $\PP$ is $G$-connected and contains a part $B$ of size at least two.
Let $f$ be a nowhere-zero flow in $\g/B$ and let $\beta_f$ be as in Observation~\ref{obs-restr},
so that $\g_B=(G[B],\beta_f)$ does not have a nowhere-zero flow.
Consequently, there exists a flow-critical contraction $\g_B/\QQ$
of $\g_B$. Let $\PP'=(\PP\setminus\{B\})\cup \QQ$.  Note that $|\QQ|\ge 2$, and thus $|\PP'|>|\PP|$.
Since $\g$ is a smallest counterexample and $|B|<|V(G)|$, we have $\sigma(G[B]/\QQ)\ge 5$.
By the induction hypothesis, it follows that
\begin{align*}
\sigma(G/\PP)&=\sigma(G/\PP')-\sigma(G[B]/\QQ)+3\\
&< 8-2l(\PP')-\sigma(G[B]/\QQ)\\
&\le 3-2l(\PP')\le 5-2l(\PP).
\end{align*}
This completes the proof.
\end{proof}

We can now establish the desired properties of smallest counterexamples.
\begin{proof}[Proof of Theorem~\ref{thm-smallcex}]
Since $\sigma(G)<5$ and $\sigma(K_2)=5$, we have $|V(G)|\ge 3$.
Consider any minimal edge-cut $S$ in $G$, and let $X$ and $Y$ be the sides of $S$ with $|X|\le |Y|$.
By Lemma~\ref{lemma-addprime},
$$6-|S|=\sigma(G/\PP)<5-2l(\PP),$$
and thus
$$|S|>1+2l(\PP).$$
Hence, if $|X|\ge 2$, we have $|S|>5$.
In particular, the only $(\le\!5)$-edge-cuts in $G$ are neighborhoods of vertices, and each vertex
has degree at least four.

Suppose now that $v$ is a vertex of $G$ of degree four, and let $v_1$ and $v_2$ be distinct neighbors of $v$.
Let $G'$ be the graph obtained from $G$ by splitting off the edges $v_1v$ and $v_2v$; by Observation~\ref{obs-split},
$(G',\beta)$ has no nowhere-zero flow.  Let $B=V(G)\setminus\{v\}$ and let $f$ be a nowhere-zero flow
in $(G',\beta)/B$ (which exists, since $G'/B$ consists of two vertices joined by an edge of multiplicity two).
Note that $G'[B]=G'-v$ is connected, since $G$ is $4$-edge-connected and $\deg v=4$.
Let $\beta_f$ be as in Observation~\ref{obs-restr}, so that $(G'-v,\beta_f)$ does not admit a nowhere-zero flow.
Consequently, there exists a partition $\PP_0$ of $B$ such that $(G'-v,\beta_f)/\PP_0$ is flow-critical,
and since $G$ is a smallest counterexample, we have $\sigma((G'-v)/\PP_0)\ge 5$.  Let $\PP_1=\PP_0\cup \{\{v\}\}$.
Let $\delta=1$ if $v_1$ and $v_2$ are in different parts of $\PP_0$ and $\delta=0$ otherwise.
By Lemma~\ref{lemma-addprime},
\begin{align*}
5-2l(\PP_1)&>\sigma(G/\PP_1)=\sigma((G'-v)/\PP_0)+\delta-1\\
&\ge 4+\delta.
\end{align*}
This is a contradiction, since if $0=l(\PP_1)=l(\PP_0)$, then $\delta=1$.
Therefore, $G$ has minimum degree at least five, and thus it is $5$-edge-connected.

Finally, suppose that $G$ contains a triangle.  Let $v_1$ and $v_2$ be adjacent vertices with a common neighbor $v$.
Let $G_2$ be the graph obtained from $G$ by splitting off the edges $v_1v$ and $v_2v$.
By Observation~\ref{obs-split}, $(G_2,\beta)$ has no nowhere-zero flows, and thus there exists a partition $\PP_2$ of $V(G_2)$
such that $(G_2,\beta)/\PP_2$ is flow-critical.  Note that $v_1$ and $v_2$ are joined by a double edge in $G_2$,
and by Observation~\ref{obs-basic}, they belong to the same part of $\PP_2$.  Consequently $l(\PP_2)\ge 1$
and $|V(G_2/\PP_2)|<|V(G)|$. Since $G$ is a smallest counterexample, we have $\sigma(G_2/\PP_2)\ge 5$.
By Lemma~\ref{lemma-addprime},
$$5-2l(\PP_2)>\sigma(G/\PP_2)=\sigma(G_2/\PP_2)-2\ge 3.$$
This is a contradiction, showing that $G$ is triangle-free.
\end{proof}

\section{Concluding remarks}

Our proof Theorem~\ref{thm-main} uses the result of Kostochka and Yancey~\cite{koyanore} to deal with the basic
case of planar graphs (Observation~\ref{obs-koyan}).  This is mostly for convenience---at all the places where
we use Observation~\ref{obs-koyan}, the graph in question is a specific composition of two exceptional graphs,
and we could make the argument self-contained by arguing that the graph resulting from this composition either
is exceptional or admits a nowhere-zero $\zz$-flow.  We chose to avoid doing this, as the arguments are tedious and
do not bring any new ideas.

Let us remark that Theorem~\ref{thm-li} can be proved similarly to Theorem~\ref{thm-smallcex}.  For simplicity, let us
outline the proof of a weaker bound $|E(G)|\le 4|V(G)|-7$, which holds also for $G=K_2$:
Defining $\sigma'(G)=4|V(G)|-|E(G)|$ and saying that $(G,\beta)$ is a smallest
counterexample if it has the smallest number of vertices among flow-critical $\zz$-bordered graphs with $\sigma'(G)<7$,
the analogue of Lemma~\ref{lemma-addprime} states that for every partition $\PP$ with at least two parts,
we have $\sigma'(G/\PP)<7-3l(\PP)$.  Following the argument of Theorem~\ref{thm-smallcex}, we show that
$G$ is $5$-edge-connected and the only $(\le\!6)$-cuts are vertex neighborhoods; and that $G$ does not
contain vertices of degree five, and thus it is 6-edge-connected.  This is a contradiction, since 6-edge-connected
graphs are $\zz$-connected~\cite{ltwz}.

It is also natural to ask the density question for $\mathbb{Z}_4$-flow-critical (or equivalently for $\mathbb{Z}_2^2$-flow-critical) graphs.
Note that by dualizing the Four Color Theorem, there are no planar $\mathbb{Z}_4$-flow-critical graphs different from $K_2$.
Not much is known about 5-critical graphs drawn on other surfaces (in particular, we do not know whether 4-colorability is decidable in polynomial time for graphs drawn in any fixed surface
of non-zero genus).  Moreover, on orientable surfaces of non-zero genus, the duality only holds in one direction (if a graph is 4-colorable, its dual has a nowhere-zero $\mathbb{Z}_4$-flow,
but the converse is not necessarily true); and consequently, duals of 5-critical graphs are not necessarily $\mathbb{Z}_4$-flow-critical
and duals of $\mathbb{Z}_4$-flow-critical graphs are not necessarily 5-critical.  And indeed, the two concepts seem to behave
quite differently on surfaces; for example, on any fixed surface of non-zero genus, there are 5-critical graphs of arbitrarily
large edgewidth~\cite{Fisk78}, but it has been conjectured that all 3-regular graphs whose duals have sufficiently large edgewidth
admit nowhere-zero $\mathbb{Z}_4$-flows~\cite{albertson2010grunbaum}.  Kochol~\cite{kochdis} gave, for any orientable surface $\Sigma$
of genus at least five, infinitely many examples of 3-regular 3-edge-connected graphs with no nowhere-zero $\mathbb{Z}_4$-flows drawn in $\Sigma$
such that their dual has edgewidth at least three; however, each of these graphs contains a specific graph with 66 vertices as a contraction,
and thus they do not provide an infinite family of $\mathbb{Z}_4$-flow-critical graphs.  
This leaves us without a natural candidate for a bound depending on the genus.
\begin{problem}
What is the smallest constant $c$ such that for every surface $\Sigma$, there exists $a_\Sigma$ such that every $\mathbb{Z}_4$-flow-critical graph $G$ drawn in $\Sigma$
satisfies $|E(G)|\le c|V(G)|+a_\Sigma$?
\end{problem}

With regards to the bounds independent of the genus, the situation is relatively clear in the bordered setting.
Consider the $\mathbb{Z}_2^2$-bordered graph $(K_{2,n-2},\beta)$ with $n\ge 5$ odd and with $\beta$ assigning
$(0,1)$ to all vertices except for one of the vertices of degree $n-2$, to which it assigns $(0,0)$.  This $\mathbb{Z}_2^2$-bordered graph is
easily seen to be flow-critical, it is planar and has $2n-4$ edges.  Similarly, $K_{2,n-2}$ is flow-critical with a properly chosen
$\mathbb{Z}_4$-boundary.  On the other hand, using the fact that every $2$-edge-connected graph that is one edge short from
having two spanning trees is both $\mathbb{Z}_2^2$- and $\mathbb{Z}_4$-connected~\cite{catlin1996graphs,lai1999extending}
and an argument similar to the proof of Theorem~\ref{thm-smallcex}, it is easy to see that
every $\mathbb{Z}_2^2$-bordered or $\mathbb{Z}_4$-bordered flow-critical graph $(G,\beta)$ with $G\neq K_2$ satisfies $|E(G)|\le 2|V(G)|-4$.
Let us remark that while a graph has a nowhere-zero $\mathbb{Z}_4$-flow if and only if it has a nowhere-zero $\mathbb{Z}_2^2$-flow,
this is not the case in the bordered setting~\cite{huvsek2020group}, and thus it is not a priori obvious that the answers
for $\mathbb{Z}_2^2$-bordered and $\mathbb{Z}_4$-bordered graphs should be the same.

However, in the non-bordered setting, the behavior seems to be quite different.  In particular, $\mathbb{Z}_2^2$-flow-critical graphs
without boundary cannot contain vertices of degree two, and thus the example of $K_{2,n-2}$ does not apply.
\begin{problem}
What is the smallest constant $c$ such that every $\mathbb{Z}_2^2$-flow-critical graph $G$ satisfies $|E(G)|\le c|V(G)|+O(1)$?
\end{problem}

Flower snarks give examples of arbitrarily large 3-regular $\mathbb{Z}_2^2$-flow-critical graphs~\cite{flower}, and thus $c\ge 3/2$.

\section*{Acknowledgement}

We would like to thank Jiaao Li for useful comments on the paper, and in
particular for helping us improve the discussion regarding the flow-critical
$\mathbb{Z}_2^2$- and $\mathbb{Z}_4$-bordered graphs.

\bibliographystyle{acm}
\bibliography{flowcrit.bib}

\begin{thebibliography}{10}

\bibitem{albertson2010grunbaum}
{\sc Albertson, M.~O., Alpert, H., belcastro, s.-m., and Haas, R.}
\newblock Gr{\"u}nbaum colorings of toroidal triangulations.
\newblock {\em Journal of Graph Theory 63}, 1 (2010), 68--81.

\bibitem{catlin1996graphs}
{\sc Catlin, P.~A., Han, Z.-Y., and Lai, H.-J.}
\newblock Graphs without spanning closed trails.
\newblock {\em Discrete Mathematics 160}, 1-3 (1996), 81--91.

\bibitem{da2008flow}
{\sc da~Silva, C.~N., and Lucchesi, C.~L.}
\newblock Flow-critical graphs.
\newblock {\em Electronic Notes in Discrete Mathematics 30\/} (2008), 165--170.

\bibitem{flower}
{\sc {da Silva}, C.~N., Pesci, L., and Lucchesi, C.~L.}
\newblock Snarks and flow-critical graphs.
\newblock {\em Electronic Notes in Discrete Mathematics 44\/} (2013), 299--305.

\bibitem{de2020strong}
{\sc de~Jong, J.~V., and Richter, R.~B.}
\newblock Strong $3 $-flow conjecture for projective planar graphs.
\newblock {\em arXiv 2011.00672\/} (2020).

\bibitem{doublecross}
{\sc Edwards, K., Sanders, D.~P., Seymour, P.~D., and Thomas, R.}
\newblock Three-edge-colouring doublecross cubic graphs.
\newblock {\em J. Comb. Theory, Ser. {B} 119\/} (2016), 66--95.

\bibitem{Fisk78}
{\sc Fisk, S.}
\newblock The nonexistence of colorings.
\newblock {\em J. Combin. Theory, Ser.~B 24\/} (1978), 247--248.

\bibitem{galfor2}
{\sc Gallai, T.}
\newblock Kritische {G}raphen {II}.
\newblock {\em Publ. Math. Inst. Hungar. Acad. Sci. 8\/} (1963), 373--395.

\bibitem{huvsek2020group}
{\sc Hu{\v{s}}ek, R., Moheln{\'\i}kov{\'a}, L., and {\v{S}}{\'a}mal, R.}
\newblock Group connectivity: {$\mathbb{Z}_4$} vs {$\mathbb{Z}_2^2$}.
\newblock {\em Journal of Graph Theory 93}, 3 (2020), 317--327.

\bibitem{jaeger1992group}
{\sc Jaeger, F., Linial, N., Payan, C., and Tarsi, M.}
\newblock Group connectivity of graphs--a nonhomogeneous analogue of
  nowhere-zero flow properties.
\newblock {\em Journal of Combinatorial Theory, Series B 56}, 2 (1992),
  165--182.

\bibitem{kochol2001equivalent}
{\sc Kochol, M.}
\newblock An equivalent version of the 3-flow conjecture.
\newblock {\em Journal of Combinatorial Theory, Series B 83}, 2 (2001),
  258--261.

\bibitem{kochdis}
{\sc Kochol, M.}
\newblock Polyhedral embeddings of snarks in orientable surfaces.
\newblock {\em Proc. Amer. Math. Soc. 137\/} (2009), 1613--1619.

\bibitem{kostochka2003new}
{\sc Kostochka, A., and Stiebitz, M.}
\newblock A new lower bound on the number of edges in colour-critical graphs
  and hypergraphs.
\newblock {\em Journal of Combinatorial Theory, Series B 87}, 2 (2003),
  374--402.

\bibitem{kostochka2018brooks}
{\sc Kostochka, A., and Yancey, M.}
\newblock A {B}rooks-type result for sparse critical graphs.
\newblock {\em Combinatorica 38}, 4 (2018), 887--934.

\bibitem{koyanore}
{\sc Kostochka, A.~V., and Yancey, M.}
\newblock Ore's conjecture on color-critical graphs is almost true.
\newblock {\em J. Comb. Theory, Ser. {B} 109\/} (2014), 73--101.

\bibitem{krivelevich1997minimal}
{\sc Krivelevich, M.}
\newblock On the minimal number of edges in color-critical graphs.
\newblock {\em Combinatorica 17}, 3 (1997), 401--426.

\bibitem{lai1999extending}
{\sc Lai, H.-J.}
\newblock Extending a partial nowhere-zero 4-flow.
\newblock {\em Journal of Graph Theory 30}, 4 (1999), 277--288.

\bibitem{li20223}
{\sc Li, J., Ma, Y., Shi, Y., Wang, W., and Wu, Y.}
\newblock On 3-flow-critical graphs.
\newblock {\em European Journal of Combinatorics 100\/} (2022), 103451.

\bibitem{ltwz}
{\sc Lov{\'a}sz, L.~M., Thomassen, C., Wu, Y., and Zhang, C.}
\newblock Nowhere-zero 3-flows and modulo $k$-orientations.
\newblock {\em J. Comb. Theory, Ser. {B} 103\/} (2013), 587--598.

\bibitem{richter2016group}
{\sc Richter, R.~B., Thomassen, C., and Younger, D.~H.}
\newblock Group-colouring, group-connectivity, claw-decompositions, and
  orientations in 5-edge-connected planar graphs.
\newblock {\em Journal of Combinatorics 7\/} (2016), 219--232.

\bibitem{Ringelbip}
{\sc Ringel, G.}
\newblock Das {G}eschlecht des vollst{\"a}ndigen paaren {G}raphen.
\newblock {\em Abhandlungen aus dem Mathematischen Seminar der Universit{\"a}t
  Hamburg 28}, 3 (1965), 139--150.

\bibitem{RSTtutte}
{\sc Robertson, N., Seymour, P., and Thomas, R.}
\newblock Tutte's edge-colouring conjecture.
\newblock {\em Journal of Combinatorial Theory, Series B 70\/} (1997),
  166--183.

\bibitem{cubapex}
{\sc Sanders, D., and Thomas, R.}
\newblock Edge three-coloring cubic apex graphs.
\newblock In preparation.

\bibitem{seymour1981nowhere}
{\sc Seymour, P.~D.}
\newblock Nowhere-zero 6-flows.
\newblock {\em Journal of combinatorial theory, series B 30}, 2 (1981),
  130--135.

\bibitem{tutteflow}
{\sc Tutte, W.}
\newblock A contribution on the theory of chromatic polynomials.
\newblock {\em Canad. J. Math. 6\/} (1954), 80--91.

\end{thebibliography}

\end{document}